\documentclass[12pt,english]{article}

\usepackage[T1]{fontenc}
\usepackage[utf8]{inputenc}
\usepackage[main=english]{babel}

\usepackage{geometry}
\usepackage{amsmath,amsthm,amssymb,amsfonts,mathtools}
\usepackage{mathrsfs}
\usepackage{esint}

\usepackage{graphicx}   
\usepackage{float}
\usepackage{tikz}       

\usepackage{comment}
\usepackage{indentfirst}
\usepackage{url}

\theoremstyle{definition}
\newtheorem{theorem}{Theorem}[section]
\newtheorem{lemma}[theorem]{Lemma}
\newtheorem{remark}[theorem]{Remark}
\newtheorem{corollary}[theorem]{Corollary}

\newtheorem{definition}[theorem]{Definition}
\newtheorem{proposition}[theorem]{Proposition}
\newtheorem{conjecture}[theorem]{Conjecture}
\title{Equivalence of Almgren--Pitts and phase-transition half-volume spectra}
\author{Talant Talipov}
\date{}

\begin{document}
\thispagestyle{empty}
\maketitle

\begin{abstract}
We prove that the Almgren--Pitts and phase-transition half-volume spectra of a closed Riemannian manifold are equal. This confirms a conjecture of Liam Mazurowski and Xin Zhou.
\end{abstract}

\section{Introduction}
\label{sec:intro}

The spectrum of the Laplacian is a fundamental invariant of a closed Riemannian manifold $(M^{n+1},g)$. A number $\lambda$ is called an eigenvalue of the Laplacian if there exists a nonzero function $u:M\to\mathbb{R}$ such that
\[
\Delta u + \lambda u = 0.
\]
It is well known that the eigenvalues form a discrete sequence
\[
0=\lambda_0<\lambda_1\le \lambda_2\le \cdots,
\qquad \lambda_p\to\infty \text{ as } p\to\infty.
\]
Moreover, the eigenvalues admit the min-max characterization
\[
\lambda_p=\inf_{(p+1)\text{-planes }P\subset W^{1,2}(M)}
\left[\sup_{u\in P\setminus\{0\}}
\frac{\int_M |\nabla u|^2}{\int_M u^2}\right],
\]
and satisfy the Weyl law
\[
\lambda_p\sim 4\pi^2 \operatorname{Vol}(B)^{-\frac{2}{n+1}}
\operatorname{Vol}(M)^{-\frac{2}{n+1}} p^{\frac{2}{n+1}}
\]
as $p\to\infty$, where $B$ denotes the unit ball in $\mathbb{R}^{n+1}$.

In \cite{G}, Gromov proposed a nonlinear analogue of the spectrum of the Laplacian. Roughly speaking, a $p$-sweepout of $M$ is a family $X$ of hypersurfaces with the property that, given any $p$ points in $M$, one can find a hypersurface $\Sigma\in X$ passing through all of them. He then defined the $p$-widths by
\[
\omega_p=\inf_{p\text{-sweepouts }X}
\left[\sup_{\Sigma\in X}\operatorname{Area}(\Sigma)\right].
\]
See Section \ref{sec:prelim} for the precise definition. The sequence $\{\omega_p\}_{p\in\mathbb N}$ is called the volume spectrum of $M$.

Gromov \cite{G1} and Guth \cite{Gu} proved that the volume spectrum has sublinear growth. More precisely, there exist constants $C_1,C_2>0$, depending on $M$, such that
\[
C_1 p^{\frac{1}{n+1}}\le \omega_p\le C_2 p^{\frac{1}{n+1}}.
\]
Later, Liokumovich, Marques, and Neves \cite{LMN} proved the Weyl law for the volume spectrum: there exists a universal constant $a_n$, depending only on the dimension, such that
\[
\omega_p\sim a_n \operatorname{Vol}(M)^{\frac{n}{n+1}}p^{\frac{1}{n+1}}
\]
as $p\to\infty$; see Chodosh--Mantoulidis \cite{CM2} for the computation of $a_n$ when $n=2$. The Weyl law for the volume spectrum has played a central role in the proof of several major results on the existence of minimal hypersurfaces.

In the early 1980s, Almgren \cite{A}, Pitts \cite{P}, and Schoen--Simon \cite{SS} developed a min-max theory for the area functional on closed Riemannian manifolds. Their combined work implies that every closed Riemannian manifold of dimension $3\le n+1\le 7$ contains a closed, smooth, embedded minimal hypersurface. Around the same time, Yau \cite{Y} conjectured that every closed manifold should contain infinitely many minimal hypersurfaces. Marques and Neves proposed a program to approach Yau's conjecture by using Almgren--Pitts min-max theory to produce a minimal hypersurface of area $\omega_p$ for each $p\in\mathbb N$.

This program has now been carried out. Let $(M^{n+1},g)$ be a closed Riemannian manifold with $3\le n+1\le 7$. Irie, Marques, and Neves \cite{IMN} proved that, for a generic metric $g$, the union of all minimal hypersurfaces in $M$ is dense. In particular, this established Yau's conjecture for generic metrics. Later, Marques, Neves, and Song \cite{MNS} strengthened this result by proving that, for a generic metric $g$, there exists a sequence of minimal hypersurfaces that becomes equidistributed in $M$. The Weyl law for the volume spectrum is a key ingredient in both arguments. Following Zhou's proof of the Multiplicity One Conjecture \cite{Z}, Marques and Neves \cite{MN2} showed that, for a generic metric $g$, there exists a sequence of minimal hypersurfaces $\Sigma_p$ with index $p$ and
\[
\operatorname{Area}(\Sigma_p)=\omega_p.
\]
Finally, Song \cite{Song} proved Yau's conjecture for arbitrary metrics $g$; see also \cite{Li} for the higher-dimensional analogue.

There is also a parallel min-max theory for minimal hypersurfaces based on phase transitions. This theory is built on the Allen--Cahn equation together with the varifold regularity theory of Hutchinson--Tonegawa--Wickramasekera \cite{HT,TW}. Gaspar and Guaraco \cite{GG2} defined a phase-transition spectrum $\{c(p)\}_{p\in\mathbb N}$ associated to a Riemannian manifold via the Allen--Cahn equation, and proved that each $c(p)$ is realized by a collection of minimal hypersurfaces with multiplicities. In ambient dimension three, Chodosh and Mantoulidis \cite{CM} proved the Multiplicity One Conjecture in the phase-transition setting. Thus, for generic metrics on $M^3$, they obtained a sequence of minimal hypersurfaces $\Sigma_p$ with index $p$ and
\[
\operatorname{Area}(\Sigma_p)=c(p).
\]

Gaspar and Guaraco \cite{GG} also proved a Weyl law for the phase-transition spectrum. Dey \cite{Dey} later proved that
\[
\omega_p=c(p)
\]
for all $p\in\mathbb N$, so the Almgren--Pitts volume spectrum and the phase-transition spectrum coincide.

In \cite{ZhouMazur0}, Mazurowski and Zhou introduced a half-volume analogue of this theory. In the Almgren--Pitts setting, one restricts to $p$-sweepouts by hypersurfaces that each enclose half the volume of $M$, and defines
\[
\tilde \omega_p=
\inf_{\text{half-volume $p$-sweepouts }X}
\left[\sup_{\Sigma\in X}\operatorname{Area}(\Sigma)\right].
\]
The sequence $\{\tilde\omega_p\}_{p\in\mathbb N}$ is called the Almgren--Pitts half-volume spectrum of $M$. Mazurowski and Zhou also defined an analogous phase-transition half-volume spectrum $\tilde c(p)$ by considering critical points of the Allen--Cahn energy subject to the constraint $\int_M u = 0$;
see Section \ref{sec:prelim} for the precise definition. In both settings they proved that the corresponding Weyl law continues to hold.

The main result of this paper is that these two half-volume spectra are equal.

\begin{theorem}
Let $\{\tilde\omega_p\}_{p=1}^\infty$ be the Almgren--Pitts half-volume spectrum of $M$, and let $\{\tilde c(p)\}_{p=1}^\infty$ be the phase-transition half-volume spectrum of $M$. Then, for every $p\in\mathbb N$,
\[
\tilde\omega_p=\tilde c(p).
\]
\end{theorem}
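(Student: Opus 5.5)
We adapt Dey's proof that $\omega_p=c(p)$ to the half-volume setting, proving the two inequalities $\tilde c(p)\le\tilde\omega_p$ and $\tilde\omega_p\le\tilde c(p)$ separately. Throughout, $\tilde c(p)$ denotes the limit as $\varepsilon\to 0$ of the normalized constrained Allen--Cahn min-max values $\tilde c_\varepsilon(p)/(2\sigma)$. These values are taken over $p$-sweepouts of the constrained space $\{u\in W^{1,2}(M):\int_M u=0\}\setminus\{0\}$, modulo $u\sim -u$, using $\mathbb Z_2$-cohomological index.

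\textbf{Step 1: $\tilde c(p)\le\tilde\omega_p$.} Take a half-volume $p$-sweepout $\Phi:X\to\mathcal C(M)$ with $\sup\mathbf M(\partial\Phi)\le\tilde\omega_p+\delta$. After a discretization and interpolation step, we may take $\Phi$ to be continuous in the $\mathbf F$-metric and to have no concentration of mass. Following Dey and Guaraco, we smooth each Caccioppoli set $\Omega=\Phi(x)$ using a signed-distance profile,
\[
u_\varepsilon^\Omega=\mathbb H_\varepsilon(\operatorname{sd}_\Omega),
\]
where $\mathbb H_\varepsilon$ is the one-dimensional heteroclinic solution, suitably truncated. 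The energy then satisfies $E_\varepsilon(u_\varepsilon^\Omega)\le 2\sigma(\mathbf M(\partial\Omega)+o(1))$. Because $\operatorname{Vol}(\Omega)=\tfrac12\operatorname{Vol}(M)$, we have $\int_M u^\Omega_\varepsilon=O(\varepsilon\,\mathbf M(\partial\Omega))$. We restore the constraint with a small vertical shift $u\mapsto u-t(u)$, chosen continuously and with $|t|=O(\varepsilon)$; the shift changes the energy by only $o(1)$. We also check that $\Omega\mapsto M\setminus\Omega$ corresponds to $u\mapsto -u$, so the construction is $\mathbb Z_2$-equivariant. This makes the family a $p$-sweepout in the Allen--Cahn sense, because the generator $\bar\lambda$ pulls back to the correct class. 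Letting $\varepsilon\to0$ and then $\delta\to0$ gives $\tilde c(p)\le\tilde\omega_p$.

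\textbf{Step 2: $\tilde\omega_p\le\tilde c(p)$.} This is the main obstacle. Take an optimal constrained Allen--Cahn $p$-sweepout $h:X\to\{\int u=0\}$ with $\sup E_\varepsilon\le\tilde c_\varepsilon(p)+\delta$. Dey's approach associates to each $u$ the sublevel sets $\{u<t\}$ and uses the coarea formula with the weight $\sqrt{2W(u)}$. This shows that for most levels $t\in(-1+\eta,1-\eta)$ the perimeter of $\{u<t\}$ is at most $E_\varepsilon(u)/(2\sigma)$ up to small error. Dey then builds a $p$-sweepout in $\mathcal Z_n(M;\mathbb Z_2)$ through a careful discretization with mass-controlled interpolation.

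Here the difficulty is that the sets $\{u<t\}$ need not have half volume. I would handle this as follows. For each $u$, choose the level $t(u)$ at which $\operatorname{Vol}\{u<t\}=\tfrac12\operatorname{Vol}(M)$. Such a level exists by continuity of the volume in $t$, provided we first replace $u$ by a generic perturbation with no fat level sets. The constraint $\int u=0$, together with $|u|\approx1$ off a thin region, forces $t(u)\in(-1+\eta,1-\eta)$ except for a set of small energy. I am unsure this selection can be made continuous, and it may not be controlled in perimeter. The fallback is to use Dey's good-level averaging to pick levels near the median level. We would then correct the volume by a small isoperimetric-type modification: add or remove small balls of volume $o(1)$, whose perimeter cost is $o(1)$ by the isoperimetric inequality. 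This yields half-volume Caccioppoli sets with perimeter at most $\tilde c_\varepsilon(p)/(2\sigma)+o(1)$.

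\textbf{Step 3: equivariance and interpolation.} We verify that $u\mapsto -u$ corresponds to complementation, so the resulting map is a half-volume $p$-sweepout. The discrete family is then interpolated into a continuous one in the half-volume setting, using the Mazurowski--Zhou interpolation lemmas for half-volume families. Finally, letting $\varepsilon\to0$ gives $\tilde\omega_p\le\tilde c(p)$, which combined with Step 1 completes the proof.
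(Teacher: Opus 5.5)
Your two-inequality structure matches the paper's, and the constant shift in Step 1 is the same mean-zero correction the paper uses. However, the half-volume-specific step is not established in either direction.

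In Step 1, you derive $\int_M u=O(\varepsilon)$ only for $\mathbb H_\varepsilon(\operatorname{sd}_\Omega)$ with $\Omega$ exactly half-volume and with geometry control. But $x\mapsto\mathbb H_\varepsilon(\operatorname{sd}_{\Phi(x)})$ is not $H^1$-continuous along an $\mathbf F$-continuous family: adding a ball of radius $r\downarrow 0$ creates a bump of height about $1$ and width about $\varepsilon$ that persists in the limit. It is also not uniformly energy-controlled. That is why Dey's discretization and gluing are needed. In that construction the refined vertex sets $\widetilde\Omega_v$ are only approximately half-volume. At non-vertex points the function is a min/max gluing with the switch family $w_\varepsilon$, so its mean is not tied to any half-volume set. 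The paper controls the mean with two ingredients:
\begin{itemize}
\item the vertex bound $|\operatorname{Vol}(\widetilde\Omega_v)-\mathfrak h|\le(3\cdot 2^p+1)\eta\delta/J$, which comes from (P2) and the fineness of the discretization;
\item locality of the gluing: $\zeta_\varepsilon(\hat x)$ agrees with a vertex phase field outside $\mathcal A_1\cup\bigcup_{i\in J_\alpha}B_i^1$ with $|J_\alpha|\le p$, and the discretization parameters are chosen so these volumes are $O(\varepsilon)$.
\end{itemize}
Without an argument of this kind you have no bound on the shift along the family.

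Step 2 has the more serious gap: nothing you describe produces a \emph{continuous} family of \emph{exactly} half-volume cycles. (This direction follows Gaspar--Guaraco, not Dey.) Their vertex family is only flat-fine, so you must pass through flat-to-mass interpolation and the Almgren extension. Both steps add boundaries of small chains and so change enclosed volumes. Correcting vertices by small balls therefore does not yield a map into $\mathcal H(M,\mathbb{Z}_2)$, and the unspecified ``half-volume interpolation lemmas'' are precisely the missing content.

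The paper proceeds as follows.
\begin{itemize}
\item It needs no median-level selection: mean zero together with (vii) gives $|\operatorname{Vol}\{u>t\}-\mathfrak h|\le C_0\varepsilon E_\varepsilon(u)+C_1'(\varepsilon E_\varepsilon(u))^{1/q}$ for \emph{every} $t\in(-1+\delta,1-\delta)$.
\item It interpolates freely in $\mathcal B(M,\mathbb{Z}_2)$. The isoperimetric inequality gives $|d_{\mathfrak h}(S)-d_{\mathfrak h}(T)|\le C_I(\mathcal F(S,T)+\mathcal F(S,T)^{(n+1)/n})$, so the whole continuous family has defect $\rho=o(1)$.
\item It then applies a continuous volume-fixing deformation. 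This is the flow of a partition-of-unity combination of gradient fields, with divergence negative on a ball in the complement and positive on a ball in the set, chosen uniformly over an $L^1$-compact class of Caccioppoli sets. The flow changes volume strictly monotonically, costs at most $C\rho$ in mass and flat norm, and leaves fixed any set whose defect is zero.
\item Equivariance comes from running this deformation on the lifted double cover over $\{D\ge 0\}$ and extending by complementation.
\item The sweepout class survives because the correction is a homotopy in $\mathcal B(M,\mathbb{Z}_2)$ and $\mathcal H(M,\mathbb{Z}_2)\hookrightarrow\mathcal B(M,\mathbb{Z}_2)$ is a homotopy equivalence.
\end{itemize}
Your Step 3 remark that $u\mapsto -u$ corresponds to complementation addresses neither of these last two points.
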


This confirms a conjecture of Mazurowski and Zhou \cite{ZhouMazur0}.

The main motivation for studying the half-volume spectrum comes from the following conjectures.

A well-known conjecture of Arnold \cite{arnold2004arnold} asserts that every Riemannian $2$-sphere $(S^2,g)$ admits at least two distinct closed curves with constant geodesic curvature $\kappa$ for every $\kappa>0$; see \cite{AB, CZ, KL, RS, Schneider, SchneiderAlex, ZZ-net} for partial results. The following \emph{Twin Bubble Conjecture} (see Zhou \cite{Zhou-ICM22}) may be viewed as a higher-dimensional analogue of Arnold's conjecture; see \cite{CZ-CMC, Dey23,GMK, Mazurowski22, ZhouMazur3, PX, Torralbo, Ye, ZZ} for partial results.

\begin{conjecture}[Twin Bubble Conjecture; X. Zhou \cite{Zhou-ICM22}]
\label{conj:two-cmcs}
Every closed Riemannian manifold $(M^{n+1},g)$ with $3\le n+1\le 7$ admits at least two distinct closed hypersurfaces with constant mean curvature $c$ for every $c>0$.
\end{conjecture}

The following conjecture may be regarded as a CMC analogue of Yau's conjecture.

\begin{conjecture}
\label{conj:CMC-Yau}
Every closed Riemannian manifold $(M^{n+1},g)$ with $3\le n+1\le 7$ admits infinitely many closed hypersurfaces with constant mean curvature enclosing half the volume of $M$.
\end{conjecture}

In \cite{ZhouMazur0}, Mazurowski and Zhou used the Allen--Cahn min-max theory together with the work of Bellettini--Wickramasekera \cite{bellettini2020inhomogeneous} to show that each value of the Allen--Cahn half-volume spectrum is realized by a constant mean curvature hypersurface enclosing half the volume of $M$, together with a collection of minimal hypersurfaces with even multiplicities.

In \cite{ZhouMazur1}, Mazurowski and Zhou proved a multiplicity one type result for the Almgren--Pitts half-volume spectrum. More precisely, they showed that for a generic metric on a closed manifold $M^{n+1}$ of dimension $3\le n+1\le 5$, each value $\tilde\omega_p$ is realized by a multiplicity one constant mean curvature hypersurface enclosing half the volume of $M$, thereby confirming Conjecture \ref{conj:CMC-Yau} in this case.

In \cite{ZZ}, Zhou and Zhu developed an Almgren--Pitts min-max theory for hypersurfaces of prescribed constant mean curvature $c$. However, that theory does not control the enclosed volume. Likewise, Bellettini and Wickramasekera \cite{bellettini2020inhomogeneous} developed an Allen--Cahn min-max theory for hypersurfaces of prescribed constant mean curvature $c$, again without control of the enclosed volume. Thus the two theories address a different constraint. The half-volume spectrum produces constant mean curvature hypersurfaces enclosing half the volume of $M$, but does not prescribe the value of the mean curvature.

In \cite{ZhouMazur2}, Mazurowski and Zhou showed that, for a generic metric on a closed manifold $(M^{n+1},g)$ with $3\le n+1\le 7$, Conjecture \ref{conj:two-cmcs} and Conjecture \ref{conj:CMC-Yau} are linked by the following alternative: for every $c>0$, either there exist infinitely many distinct smooth, closed, almost-embedded hypersurfaces in $M$ with constant mean curvature equal to $c$, or there exist infinitely many distinct smooth, closed, almost-embedded hypersurfaces in $M$ with constant mean curvature less than $c$ and enclosing half the volume of $M$.

\paragraph*{Outline.} In Section \ref{sec:prelim} we introduce the notation and recall the definitions of the half-volume spectra. In Section \ref{sec:phase-to-cycles} we prove
\[
\tilde\omega_p(M)\le \frac{1}{2\sigma}\liminf_{\varepsilon\to 0^+}\tilde c_\varepsilon(p).
\]
In Section \ref{sec:cycles-to-phase} we prove the reverse inequality.

In Section \ref{sec:phase-to-cycles} we follow the ideas of Gaspar and Guaraco \cite{GG2}. Starting from a mean-zero family of $H^1(M)$-functions, we first construct a discrete family of cycles from their level sets, and then extend it to a mass-continuous family. The main difficulty is that the half-volume condition is not automatically preserved in this procedure. To address this, we first show that the mean-zero assumption implies that the resulting family is close to half-volume. We then construct a half-volume correction map, which projects such families to exactly half-volume cycles.

In the reverse direction, in Section \ref{sec:cycles-to-phase}, we follow the ideas of Dey \cite{Dey}. Starting from a flat-continuous family of cycles, we first approximate it by a discrete family of smooth hypersurfaces. We then refine this to a family of hypersurfaces that are smooth away from a fixed codimension-two set and satisfy suitable locality properties. Using the signed distance functions to these hypersurfaces, we construct a discrete family of Lipschitz functions. Next, we extend this family from the $0$-skeleton to the whole cubical complex by using Dey's gluing construction. Here the main difficulty is that the resulting family need not be mean-zero. However, the locality properties of the almost-smooth hypersurfaces allow us to control the integrals of the glued functions. This makes it possible to construct a mean-zero correction map and thereby modify the final family so that it satisfies the mean-zero constraint.

\paragraph*{Acknowledgments.} The author is grateful to Yevgeny Liokumovich for his invaluable supervision and encouragement throughout this work. This work was supported by the Dr. Sergiy and Tetyana Kryvoruchko Graduate Scholarship in Mathematics.

\section{Preliminaries}
\label{sec:prelim}

Let $(M^{n+1},g)$ be a closed Riemannian manifold. We adopt the following notation.
\begin{itemize}
\item Let $\mathfrak h = \frac{1}{2}\operatorname{Vol} M$.
\item Let $\mathcal C(M)$ denote the collection of Caccioppoli sets in $M$.
\item Let $\mathcal C_{\mathfrak h}(M)$, $\mathcal C_{\ge \mathfrak h}(M)$, and $\mathcal C_{\le \mathfrak h}(M)$ denote the spaces of Caccioppoli sets with volume equal to $\mathfrak h$, at least $\mathfrak h$, and at most $\mathfrak h$, respectively.
\item Let $\mathcal Z(M,\mathbb{Z}_2)$ denote the space of all $n$-dimensional flat chains mod 2 in $M$.
\item Let $\mathcal B(M,\mathbb{Z}_2)$ denote the set of all $T\in \mathcal Z(M,\mathbb{Z}_2)$ for which $T = \partial \Omega$ for some $\Omega\in \mathcal C(M)$. This is the connected component of the empty set in $\mathcal Z(M,\mathbb{Z}_2)$ with respect to the flat topology.
\item Let $\mathcal H(M,\mathbb{Z}_2)$ be the set of all $T \in \mathcal Z(M,\mathbb{Z}_2)$ such that $T = \partial \Omega$ for some $\Omega\in \mathcal{C}_{\mathfrak h}(M)$. We refer to this as the space of ``half-volume cycles.''
\item We write $\mathcal F$ for the flat topology, $\mathbf F$ for the $\mathbf F$-topology, and $\mathbf M$ for the mass topology. Unless stated otherwise, all spaces are equipped with the flat topology.
\item By abuse of notation, we write $\operatorname{Vol}(\Omega)$ and $\operatorname{Area}(T)$ in place of $\mathbf M(\Omega)$ and $\mathbf M(T)$ for $\Omega\in \mathcal C(M)$ and $T\in \mathcal Z(M,\mathbb{Z}_2)$, respectively.
\end{itemize}

\subsection{Half-volume spectrum}
\label{subsec:half-volume}

In this section, we briefly recall the definitions of the half-volume spectrum and the phase-transition half-volume spectrum introduced by Mazurowski and Zhou \cite{ZhouMazur0}.

Let $X$ be a cubical complex. Since $\mathcal B(M,\mathbb{Z}_2)$ is weakly homotopy equivalent to $\mathbb{R}\text{P}^\infty$ \cite{MN2}, its cohomology ring with $\mathbb{Z}_2$ coefficients is $\mathbb{Z}_2[\bar\lambda]$, where the generator $\bar\lambda$ has degree 1.

\begin{definition}
\label{ps}
A flat continuous map $\Phi: X\to \mathcal B(M,\mathbb{Z}_2)$ is called a $p$-sweepout if $\Phi^*\bar\lambda^p \neq 0$ in $H^p(X,\mathbb{Z}_2)$.
\end{definition}

\begin{definition}
A map $\Phi: X\to \mathcal Z(M,\mathbb{Z}_2)$ is said to have {\it no concentration of mass} if
\[
\lim_{r\to 0} \left[\sup_{q\in M} \sup_{x\in X} \operatorname{Area}(\Phi(x) \llcorner B(q,r))\right] = 0.
\]
\end{definition}

\begin{definition}
Let $\mathcal P_p(M)$ denote the collection of all $p$-sweepouts of $M$ with no concentration of mass.
\end{definition}

\begin{definition}
The $p$-width of $M$ is
\[
\omega_p = \inf_{\Phi\in \mathcal P_p(M)} \left[\sup_{x\in \operatorname{dom}(\Phi)} \operatorname{Area}(\Phi(x))\right].
\]
\end{definition}

By Corollary 8 in \cite{ZhouMazur0}, the space \(\mathcal H(M,\mathbb{Z}_2)\) is homotopy equivalent to \(\mathcal B(M,\mathbb{Z}_2)\). In particular, \(H^*(\mathcal H(M,\mathbb{Z}_2);\mathbb{Z}_2)\cong \mathbb{Z}_2[\lambda]\), where \(\lambda\) is the degree-one generator.

\begin{definition}
A flat continuous map $\Phi: X\to \mathcal H(M,\mathbb{Z}_2)$ is called a half-volume $p$-sweepout if $\Phi^* \lambda^p \neq 0$ in $H^p(X,\mathbb{Z}_2)$.
\end{definition}

\begin{definition}
Let $\mathcal Q_p(M)$ denote the collection of all half-volume $p$-sweepouts of $M$ with no concentration of mass.
\end{definition}

\begin{definition}
The half-volume $p$-width of $M$ is
\[
\tilde \omega_p = \inf_{\Phi\in \mathcal Q_p(M)}\left[\sup_{x\in \operatorname{dom}(\Phi)} \operatorname{Area}(\Phi(x))\right].
\]
\end{definition}

We call the sequence $\{\tilde \omega_p\}_{p\in \mathbb{N}}$ the half-volume spectrum of $M$.

We may assume that the domains appearing in the min-max sequence are finite cubical complexes of dimension at most $p$; see Lemma 2.25 in \cite{Staffa}.

There is a corresponding half-volume spectrum in the Allen--Cahn setting. Let $W: \mathbb{R}\to \mathbb{R}$ be an even double-well potential. Thus:
\begin{itemize}
\item[(i)] $W$ is smooth and non-negative,
\item[(ii)] $W(x) = W(-x)$ for all $x\in \mathbb{R}$,
\item[(iii)] $W$ has non-degenerate minima $W(\pm1)=0$, \label{property(iii)}
\item[(iv)] $W$ has a non-degenerate maximum $W(0)>0$,
\item[(v)] $W$ is increasing on $(-1,0)$ and $(1,\infty)$ and decreasing on $(0,1)$ and $(-\infty,-1)$,
\item[(vi)] there are constants $\kappa > 0$ and $\alpha\in(0,1)$ such that $W''(x) \ge \kappa$ for all $\vert x\vert \ge \alpha$,
\item[(vii)] there are constants $0 < C_1 < C_2$, $\beta > 1$, and $2 < q < \frac{11}{5}$ such that
\[
C_1 \vert x\vert^q \le W(x) \le C_2 \vert x\vert^q
\quad \text{and} \quad
C_1 \vert x\vert^{q-1} \le \vert W'(x)\vert \le C_2 \vert x\vert^{q-1}
\]
for all $\vert x\vert \ge \beta$.
\end{itemize}

Define
\[
\sigma = \int_{-1}^1 \sqrt{W(s)/2}\, ds.
\]
For a function $u: M\to \mathbb{R}$ in $W^{1,2}$ and $\varepsilon > 0$, define the Allen--Cahn energy by
\[
E_\varepsilon(u) = \int_M \frac \varepsilon 2 \vert  \nabla u\vert^2 + \frac{W(u)}{\varepsilon}.
\]

To define the spectrum, we need some additional background. A paracompact topological space $X$ is called a $\mathbb{Z}_2$-space if it carries a free $\mathbb{Z}_2$-action. For such a space, there is a quotient $T = X/\mathbb{Z}_2$, and the natural projection $X\to T$ is a principal $\mathbb{Z}_2$-bundle. Every such bundle arises as the pullback of the universal bundle $S^\infty \to \mathbb{R}\text{P}^\infty$. More precisely, there is a classifying map $f: T\to \mathbb{R}\text{P}^\infty$ such that $X\to T$ is the pullback of $S^\infty \to \mathbb{R}\text{P}^\infty$ via $f$. The Alexander--Spanier cohomology ring of $\mathbb{R}\text{P}^\infty$ with $\mathbb{Z}_2$ coefficients is $\mathbb{Z}_2[\mu]$, where $\mu$ is the degree-one generator. The map $f$ is unique up to homotopy, and therefore the classes $f^*\mu^p$ are well-defined in the Alexander--Spanier cohomology ring $H^*(T,\mathbb{Z}_2)$. The $\mathbb{Z}_2$-index of $X$ is the largest $p$ such that $f^*\mu^{p-1} \neq 0$ in $H^*(T,\mathbb{Z}_2)$. A subset $A\subset X$ is called invariant if it is preserved by the $\mathbb{Z}_2$-action.

The $\mathbb{Z}_2$-index has the following basic properties (see \cite{FR}).
\begin{itemize}
\item[(i)] (Monotonicity) If $X_1$ and $X_2$ are $\mathbb{Z}_2$-spaces and there is a continuous equivariant map $X_1\to X_2$, then $\operatorname{ind}_{\mathbb{Z}_2}(X_1) \le \operatorname{ind}_{\mathbb{Z}_2}(X_2)$.

\item[(ii)] (Subadditivity) If $X$ is a $\mathbb{Z}_2$-space and $A_1$, $A_2$ are closed invariant subsets with $A_1\cup A_2 = X$, then
\[
\operatorname{ind}_{\mathbb{Z}_2}(X) \le \operatorname{ind}_{\mathbb{Z}_2}(A_1) + \operatorname{ind}_{\mathbb{Z}_2}(A_2).
\]

\item[(iii)] (Continuity) If $X$ is a $\mathbb{Z}_2$-space and $A$ is a closed invariant subset of $X$, then there exists an invariant neighborhood $V$ of $A$ in $X$ such that
\[
\operatorname{ind}_{\mathbb{Z}_2}(A) = \operatorname{ind}_{\mathbb{Z}_2}(\overline V).
\]
\end{itemize}

The space $W^{1,2}(M)\setminus \{0\}$ is paracompact since it is a metric space. It also has the natural $\mathbb{Z}_2$-action given by $u\mapsto -u$. Since $W$ is even, the functional $E_\varepsilon$ is invariant under this action, that is, $E_\varepsilon(u) = E_\varepsilon(-u)$. A subset $A\subset W^{1,2}(M)\setminus\{0\}$ is called invariant if $u\in A$ exactly when $-u\in A$. The $\mathbb{Z}_2$-action then restricts to any such $A$. Define
\[
\mathcal F_p = \{A\subset W^{1,2}(M)\setminus\{0\}:\ A\text{ is compact and invariant with } \operatorname{ind}_{\mathbb{Z}_2}(A) \ge p+1\}.
\]
Gaspar and Guaraco \cite{GG2} define the min-max values
\[
c_\varepsilon(p) = \inf_{A\in \mathcal F_p} \left[\sup_{u\in A} E_\varepsilon(u)\right].
\]
They then set $c(p) = \liminf_{\varepsilon\to 0} c_\varepsilon(p)$. The sequence $\{c(p)\}_{p\in \mathbb{N}}$ is called the phase transition spectrum of $M$.

Now define
\[
Y = \{u\in W^{1,2}(M): \int_M u = 0\}.
\]
Since $Y$ is a closed subspace of $W^{1,2}(M)$, it is again a Hilbert space. We can carry out the same construction with $Y$ in place of $W^{1,2}(M)$. For each $p\in \mathbb{N}$, define
\[
\mathcal G_p = \{A\subset Y\setminus\{0\}:\, A \text{ is compact and invariant with } \operatorname{ind}_{\mathbb{Z}_2}(A) \ge p+1\},
\]
and set
\[
\tilde c_\varepsilon(p) = \inf_{A\in \mathcal G_p}\left[\sup_{u\in A} E_\varepsilon(u)\right].
\]
Taking the limit as $\varepsilon \to 0$ yields the phase-transition half-volume spectrum.

\begin{definition}
For each $p\in \mathbb{N}$, let $\tilde c(p) = \liminf_{\varepsilon\to 0} \tilde c_\varepsilon(p)$. The phase-transition half-volume spectrum of $M$ is the sequence $\{\tilde c(p)\}_{p\in \mathbb{N}}$.
\end{definition}

\begin{remark}
Mazurowski and Zhou \cite{ZhouMazur0} include the normalization constant $1/2\sigma$ in the definition of $\tilde c_\varepsilon(p)$ and $\tilde c(p)$. We omit this factor in order to match the notation in \cite{Dey, GG2}.
\end{remark}
\subsection{Cell complexes and subdivisions}
\label{subsec:cell_complexes}

For $k\in\mathbb{N}$, let $I(k)$ denote the cell complex on $I=[0,1]$ whose $0$-cells are
\[
[0],\ [k^{-1}],\ [2k^{-1}],\ \dots,\ [1-k^{-1}],\ [1],
\]
and whose $1$-cells are the closed intervals
\[
[0,k^{-1}],\ [k^{-1},2k^{-1}],\ \dots,\ [1-k^{-1},1].
\]
For $m\in\mathbb{N}$, let $I^m(k)$ denote the product cell complex on $I^m=[0,1]^m$ whose cells are of the form
\[
\alpha_1\otimes \alpha_2\otimes \cdots \otimes \alpha_m,
\qquad \alpha_j\in I(k).
\]
We identify a cell $\alpha_1\otimes\cdots\otimes\alpha_m$ with its support
$\alpha_1\times\cdots\times\alpha_m\subset I^m$. Note that $I^m(1)$ is the standard cell complex on $I^m$.

For each $m\in\mathbb{N}$, define the involution
\[
\tau_m:I^m\to I^m,
\qquad
\tau_m(x_1,\dots,x_m):=(1-x_1,\dots,1-x_m).
\]
When the dimension is clear, we simply write $\tau$.

A cubical subcomplex $X\subset I^m(1)$ is called \emph{symmetric} if $\tau(X)=X$. Since $\tau$ sends cells of $I^m(k)$ to cells of $I^m(k)$, every subdivision $X(k)$ of a symmetric cubical complex is again symmetric.

If $X$ is symmetric, we regard $\tau$ as the free $\mathbb{Z}_2$-action on $X$.
A map $h:X\to Y$ is called \emph{odd} if $h(\tau x)=-h(x)$ for all $x\in X$, and a map $\Phi:X\to Z$ is called \emph{even} if $\Phi(\tau x)=\Phi(x)$ for all $x\in X$.

If $Y$ is a subcomplex of $I^m(1)$, define $Y(k)$ to be the union of all cells of $I^m(k)$
whose support is contained in some cell of $Y$.
If $Y$ is a cell complex, we write $Y_p$ for the collection of its $p$-cells.
If $\beta,\alpha$ are cells with $\beta$ a face of $\alpha$ (we allow $\dim\beta<\dim\alpha$,
not necessarily codimension one), we write $\beta\prec\alpha$, and we write $\beta\preceq\alpha$
if $\beta\prec\alpha$ or $\beta=\alpha$.

If $\lambda=[ik^{-1},(i+1)k^{-1}]\in I(k)$ is a $1$-cell, define the canonical map
\[
\Delta_\lambda:I\to I,\qquad \Delta_\lambda(t)=(i+t)k^{-1},
\]
so that $\Delta_\lambda:I\to\lambda$ is a homeomorphism.
More generally, if $\alpha=\alpha_1\otimes\cdots\otimes\alpha_m\in I^m(k)_p$, there exist precisely
$p$ indices $1\le j_1<\cdots<j_p\le m$ such that $\dim(\alpha_{j_s})=1$.
Define $\Delta_\alpha:I^p\to I^m$ by
\[
(\Delta_\alpha(t_1,\dots,t_p))_j=
\begin{cases}
\Delta_{\alpha_{j_s}}(t_s), & j=j_s \text{ for some }s,\\
\alpha_j, & \dim(\alpha_j)=0,
\end{cases}
\]
so that $\Delta_\alpha:I^p\to \alpha$ is a homeomorphism. Let $D_\alpha:\alpha\to I^p$ be its inverse.
If $\beta\prec\alpha$ and $D_\alpha(\beta)=\hat\beta$ is a face of $I^p(1)$, then the compatibility relation $\Delta_\alpha\circ\Delta_{\hat\beta}=\Delta_\beta$ holds.

A \emph{cubical complex} is a subcomplex $X\subset I^K(1)$ for some $K\in\mathbb{N}$.
For $k\in\mathbb{N}$, we define $X(k)$ to be the cell complex on $X$ induced by the subdivision $I^K(k)$,
i.e.\ the union of all cells of $I^K(k)$ whose support lies in a cell of $X$.

Now let $\pi:\widehat X\to \tilde X$ be a double cover, where $\tilde X$ is a cubical complex.
For $k\in\mathbb{N}$, define $\widehat X(k)$ to be the cell complex whose cells are the connected components
of $\pi^{-1}(c)$ as $c$ ranges over the cells of $\tilde X(k)$.
Since each cell $c$ is contractible, $\pi^{-1}(c)$ splits as a disjoint union of two cells
\[
\pi^{-1}(c)=\hat c_1\sqcup \hat c_2,
\]
and the restrictions $\pi:\hat c_i\to c$ are homeomorphisms.

\subsection{Lifting via the pullback double cover}
\label{subsec:lifting}

We will need a continuous choice of Caccioppoli representatives for a family
$\Phi:X\to \mathcal B(M,\mathbb{Z}_2)$. The correct framework is that the
boundary map from Caccioppoli sets is a two-sheeted covering.

Let $X$ be a $p$-dimensional symmetric cubical subcomplex, equipped with the involution $\tau$ from Subsection~\ref{subsec:cell_complexes}, and let $\tilde X:=X/\{x\sim \tau(x)\}$ be the orbit space. We assume that $\Phi:X\to \mathcal B(M,\mathbb{Z}_2)$ is even, i.e.\ $\Phi(\tau x)=\Phi(x)$, so it induces a flat-continuous map
\[
\tilde \Phi:\tilde X\to \mathcal B(M,\mathbb{Z}_2),
\qquad
\tilde \Phi([x])=\Phi(x).
\]

Equip $\mathcal C(M)$ with the $L^1$-metric
\[
d_{L^1}(\Omega_1,\Omega_2):=\operatorname{Vol}(\Omega_1\Delta\Omega_2).
\]
By \cite{MN2}, the boundary map
\[
\partial:\mathcal C(M)\longrightarrow \mathcal B(M,\mathbb{Z}_2),\qquad \Omega\longmapsto \partial[\![\Omega]\!],
\]
is a double cover. Its deck transformation is given by the complement map $\Omega\mapsto M\setminus\Omega$.

Define the pullback space
\[
\widehat X
:=\{(\tilde x,\Omega)\in \tilde X\times \mathcal C(M):\ \partial\Omega=\tilde \Phi(\tilde x)\},
\qquad
\pi(\tilde x,\Omega):=\tilde x.
\]
Since $\widehat X$ is the pullback of the covering $\partial:\mathcal C(M)\to\mathcal B(M,\mathbb{Z}_2)$ along $\tilde \Phi$, the map $\pi:\widehat X\to \tilde X$ is again a double cover. For each $\tilde x\in \tilde X$, the fiber $\pi^{-1}(\tilde x)$ consists of exactly two points $(\tilde x,\Omega)$ and $(\tilde x,M\setminus\Omega)$.

Let
\[
\iota:\widehat X\to\widehat X,
\qquad
\iota(\tilde x,\Omega):=(\tilde x,M\setminus\Omega).
\]
Then $\iota$ is the deck involution of the double cover $\pi:\widehat X\to\tilde X$.

Define the lifted family of sets by taking the second projection:
\[
\widehat\Omega:\widehat X\to \mathcal C(M),
\qquad
\widehat\Omega(\tilde x,\Omega):=\Omega.
\]
Then by construction $\partial\widehat\Omega=\tilde \Phi\circ \pi$ and $\widehat\Omega(\iota\hat x)=M\setminus \widehat\Omega(\hat x)$
for all $\hat x\in\widehat X$. Moreover, $\widehat\Omega$ is $L^1$-continuous: $\widehat X$ carries the subspace topology from $\tilde X\times\mathcal C(M)$ and $\widehat\Omega$ is the restriction of the continuous projection $(\tilde x,\Omega)\mapsto \Omega$.

\begin{definition}\label{def:dhv}
Let $T\in \mathcal B(M,\mathbb{Z}_2)$ be a boundary cycle. Choose any Caccioppoli set $\Omega\in \mathcal C(M)$ with $\partial\Omega=T$ and define the \emph{half-volume defect} of $T$ by
\[
d_{\mathfrak h}(T):=\bigl|\operatorname{Vol}(\Omega)-\mathfrak h\bigr|.
\]
\end{definition}

The boundary map $\partial:\mathcal C(M)\to \mathcal B(M,\mathbb{Z}_2)$ is a double cover with deck transformation $\Omega\mapsto M\setminus\Omega$. Hence, if $\Omega_1,\Omega_2\in\mathcal C(M)$ satisfy $\partial\Omega_1=\partial\Omega_2=T$, then $\Omega_2=\Omega_1$ or $\Omega_2=M\setminus\Omega_1$ (up to null sets). Since $\mathfrak h=\frac12\operatorname{Vol}(M)$,
\[
\bigl|\operatorname{Vol}(M\setminus\Omega_1)-\mathfrak h\bigr|=\bigl|\operatorname{Vol}(\Omega_1)-\mathfrak h\bigr|,
\]
so $d_{\mathfrak h}(T)$ does not depend on the choice of $\Omega$.

\section{From level-sets to cycles} \label{sec:phase-to-cycles}

In this section, we prove the following inequality.

\begin{theorem} \label{thm:wplp}
For every $p \in \mathbb{N}$, it holds
	\[\tilde\omega_p(M) \leq \frac{1}{2\sigma}\liminf_{\varepsilon \to 0^+} \tilde c_\varepsilon(p).\]
\end{theorem}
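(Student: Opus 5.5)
Fix $p$ and a sequence $\varepsilon_j\to0$ realizing the $\liminf$. For each $j$ choose $A_j\in\mathcal G_p$ with $\sup_{A_j}E_{\varepsilon_j}\le \tilde c_{\varepsilon_j}(p)+\varepsilon_j$. Following Gaspar--Guaraco \cite{GG2}, I first replace $A_j$ by a nicer set. Using the continuity property of the $\mathbb Z_2$-index and an odd approximation argument, I obtain a finite symmetric cubical complex $X$ of dimension $\le p$ and an odd continuous map $h_j:X\to Y\setminus\{0\}$ whose image has $\mathbb Z_2$-index $\ge p+1$. The image lies in a small neighborhood of $A_j$, and the functions are truncated to $[-1,1]$ and mollified so that they are smooth. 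Truncation preserves oddness and decreases energy, but it can destroy the mean-zero condition. However, for functions with bounded energy the truncation changes $\int_M u$ only by $o(1)$ as $\varepsilon\to0$, because $W(u)/\varepsilon$ controls $\int (|u|-1)_+$. The output is a family $h_j$ with $\sup E_{\varepsilon_j}(h_j(x))\le\tilde c_{\varepsilon_j}(p)+o(1)$, $|\int_M h_j(x)|\le \delta_j\to0$, and $\mathbb Z_2$-index at least $p+1$.

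Next I build cycles. By the Modica--Mortola/coarea computation, for $w=\Psi\circ u$ with $\Psi(s)=\int_0^s\sqrt{W/2}$ we have $\int_{-\sigma}^{\sigma}\operatorname{Area}(\partial\{w>t\})\,dt\le E_\varepsilon(u)$. Following \cite{GG2}, on the vertices of a fine subdivision $X(k)$ I choose a common level $t$ (for each vertex $x$, and odd-compatibly, taking $\{w>t\}$ at $x$ and $\{w>-t\}^c$ at $\tau x$, both with the same boundary) so that the areas are at most $(2\sigma)^{-1}E_\varepsilon+o(1)$. This gives an even discrete family with fineness tending to zero in the $\mathbf F$/mass sense. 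I then use Marques--Neves interpolation, which is where no concentration of mass comes in and is inherited from energy density bounds, to get an even, mass-continuous family $\Phi_j:X\to\mathcal B(M,\mathbb Z_2)$ with $\sup\operatorname{Area}\le(2\sigma)^{-1}\tilde c_{\varepsilon_j}(p)+o(1)$. The topological claim that $\Phi_j$ is a $p$-sweepout of the quotient $\tilde X$ follows as in \cite{GG2}. The lift $\widehat\Omega$ from Subsection~\ref{subsec:lifting} is equivariantly homotopic to the sign map $u\mapsto\{u>0\}$, so the double cover $\widehat X\to\tilde X$ is classified by a map pulling back $\mu^p\ne0$, and hence $\tilde\Phi_j^*\bar\lambda^p\neq0$.

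Then I control the half-volume defect. Since $\int_M u$ is near $0$ and $u$ is $L^1$-close to $\pm1$ off a set of measure $o(1)$ (again by the energy bound and the Modica--Mortola estimate), we get $\operatorname{Vol}\{u>t\}=\mathfrak h+o(1)$. The interpolation moves volume only by the fineness, so $d_{\mathfrak h}(\Phi_j(x))\le\eta_j\to0$ uniformly in $x$.

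The main obstacle is the \textbf{half-volume correction map}. I need a map $R$, defined on $\{T\in\mathcal B: d_{\mathfrak h}(T)\le\eta\}$ along the family, with values in $\mathcal H(M,\mathbb Z_2)$, that increases area by at most $C\eta^{n/(n+1)}$ (or $o(1)$), preserves no concentration of mass, is continuous, and is homotopic to the identity inside $\mathcal B$ so that the cohomology class is preserved; recall $\mathcal H\simeq\mathcal B$ by \cite{ZhouMazur0}. My first attempt works on the lift $\widehat\Omega(\hat x)$, which is $L^1$-continuous and equivariant under complement. I fix a fine finite cover of $M$ by small balls $B(q_i,r)$ and, continuously in $\hat x$, add or remove the portion of $\widehat\Omega$ inside a growing/shrinking concentric ball family, using a fixed ordering of balls. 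The volume then moves monotonically and continuously, and I stop at exactly $\mathfrak h$. Adding a ball of volume $\approx\eta$ costs area $\lesssim\eta^{n/(n+1)}$. The operation must commute with complement: adding to $\Omega$ corresponds to removing from $M\setminus\Omega$, which is consistent if the rule is ``move volume toward $\mathfrak h$''. The delicate points are continuity in flat topology where the defect sign changes (there the correction is near zero anyway), and ensuring the sweep parameter is chosen continuously, for instance by taking the minimal parameter, which may require a small perturbation to make the volume strictly monotone. Composing gives $R\circ\Phi_j\in\mathcal Q_p(M)$ with $\sup\operatorname{Area}\le(2\sigma)^{-1}\tilde c_{\varepsilon_j}(p)+o(1)$. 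Letting $j\to\infty$ gives the theorem.
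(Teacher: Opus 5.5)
Your outline has the same architecture as the paper: reduction to odd maps on $p$-dimensional symmetric cubical complexes (Lemma~\ref{lemma_pdim}), the Gaspar--Guaraco level-set discretization, interpolation, a mean-zero $\Rightarrow$ almost-half-volume estimate, a half-volume correction, and homotopy invariance of the sweepout class. The difference, and the gap, is in the correction step, which you leave as a ``first attempt''.

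\textbf{The gap.} The area estimate you give for the correction does not apply to your construction. In your sweep, $B(q_i,s)$ absorbs the deficit only once $\operatorname{Vol}(B(q_i,s)\setminus\widehat\Omega)$ reaches it. Nothing ties $s$ to $\eta$: the ball may first grow through a region already contained in $\widehat\Omega$, and with a fixed ordering you pass through entire balls of the cover before stopping. So you are not ``adding a ball of volume $\approx\eta$''. The only bound your argument gives is $\mathcal H^n(\partial B(q_i,s))\lesssim r^n$ per ball, and this does not tend to zero. Without something better, the corrected family need not satisfy $\sup\operatorname{Area}\le(2\sigma)^{-1}\tilde c_{\varepsilon_j}(p)+o(1)$, which is the whole point of the step.

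The missing ingredient is a \emph{net} perimeter estimate. Write $P$ for perimeter and apply the divergence theorem on $B_s\setminus\Omega$ to $X=\nabla\tfrac12 d_q^2$. Here $|X|\le s$, $X\cdot\nu=s$ on $\partial B_s$, and $\operatorname{div}X\le C$ below the convexity radius. This gives, for a.e.\ $s$, and then for every $s$ by lower semicontinuity,
\[
P(\Omega\cup B_s)\le P(\Omega)+\min\Bigl\{Cs^n,\ \frac{C}{s}\operatorname{Vol}(B_s\setminus\Omega)\Bigr\}\le P(\Omega)+C\,v^{\frac{n}{n+1}},
\]
where $v=\operatorname{Vol}(B_s\setminus\Omega)$ is the volume actually added. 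Summing over the $N$ balls, with $r$ (hence $N$) fixed independently of $\eta$, bounds the total increase by $CN^{1/(n+1)}\eta^{n/(n+1)}$.

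\textbf{Continuity is not a problem.} The issue you flag there is not real, and you need neither a perturbation nor a minimal-parameter selection. The sweep is nested, so along it the $L^1$-distance equals the volume difference. Hence its member of volume $\mathfrak h$ is unique up to null sets, and one checks $\operatorname{Vol}(R(\Omega)\Delta R(\Omega'))\le 2\operatorname{Vol}(\Omega\Delta\Omega')$. Setting $R(\Omega):=M\setminus R(M\setminus\Omega)$ when $\operatorname{Vol}(\Omega)>\mathfrak h$ makes $R$ complement-equivariant and continuous across $\{\operatorname{Vol}=\mathfrak h\}$, where $R=\mathrm{id}$. So $R$ descends through the covering $\partial:\mathcal C(M)\to\mathcal B(M,\mathbb Z_2)$. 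Partial sweeps give the homotopy to the identity, and $\|\partial R(\Omega)\|\le\|\partial\Omega\|+\sum_i\mathcal H^n\llcorner\partial B(q_i,s_i)$ preserves no concentration of mass.

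\textbf{Comparison with the paper.} Repaired this way, your correction is a single global, family-independent map (in effect a deformation retraction of $\mathcal B(M,\mathbb Z_2)$ onto $\mathcal H(M,\mathbb Z_2)$), at the cost of a sublinear mass loss. Proposition~\ref{prop:continuous_correction_lifted} works differently. It restricts to the $L^1$-compact set of sets with perimeter $\le\Lambda_0$ and volume in $[\mathfrak h/2,3\mathfrak h/2]$. It flows by gradient fields with divergence $-\phi_i^-+a_i\phi_i^+$ localized near density points, patched by a partition of unity over the given family. Volume is then uniformly strictly monotone in flow time, and the paper inverts this to choose the time. Because the correction is a push-forward by diffeomorphisms with uniform Lipschitz bounds, both the flat and the mass changes are linear in the defect.

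\textbf{Smaller points.}
\begin{itemize}
\item The Gaspar--Guaraco vertex family is only flat-fine (Proposition~\ref{prop:GG_flat_fine_discrete}(iii)), not mass- or $\mathbf F$-fine. Mass-fineness comes from the interpolation.
\item The levels must stay in $(-1+\delta,1-\delta)$ for Lemma~\ref{lem:meanzero_near_half_volume}, whose constants degenerate as $\delta\to0$. This is why the area bound is really $E_\varepsilon/(4\tilde\sigma)$ with $\tilde\sigma\uparrow\sigma/2$ taken last, not $(2\sigma)^{-1}E_\varepsilon+o(1)$ directly.
\item The sweepout argument needs $\operatorname{ind}_{\mathbb Z_2}(X)\ge p+1$ for the domain, not for the image, since monotonicity goes the other way.
\item Truncation is unnecessary: approximating inside a finite-dimensional subspace of $Y$ keeps the mean exactly zero.
\end{itemize}
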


The $p$-widths, $\tilde \omega_p(M)$, are defined in terms of maps $\Phi: X \to \mathcal H(M,\mathbb{Z}_2)$ where $X$ is a cubical complex. However, the min-max values $\tilde c_\varepsilon(p)$ are defined in terms of the elements of $\mathcal{G}_p$ which can be very different from continuous images of cubical complexes. Because of this, in order to prove Theorem \ref{thm:wplp} we will first approximate a set $A \in \mathcal{G}_p$ which is almost optimal (in the sense that its energy is close to $\tilde c_\varepsilon(p)$) by the image of an odd map $h$ from a $p$-dimensional cubical complex into $Y$.

\subsection{Cubical subcomplexes and min-max values}

Initially, we need to show that the min-max value $\tilde c_\varepsilon(p)$ can be obtained by restricting ourselves to sets which are the image of certain $p$-dimensional subcomplexes of $I^m(k)$ by odd maps into $Y$.

Fix $p \in \mathbb{N}$ and denote by $\mathcal{C}_p$ the family of all $X$ that are $p$-dimensional symmetric cubical subcomplexes of $I^m(k)$, for some $m,k \in \mathbb{N}$, with $\operatorname{ind}_{\mathbb{Z}_2}(X)\geq p+1$. For every such $X$, we consider also the family $\tilde \Gamma(X)$ of all continuous odd maps $h:X \to Y$ and its associated min-max values
	\[\tilde c_\varepsilon(X) = \inf_{h \in \tilde \Gamma(X)} \sup_{x\in h(X)} E_\varepsilon(x).\]
By the monotonicity property of the index, we have $h(X) \in \mathcal{G}_p$ for all $h \in \tilde\Gamma(X)$, thus $\tilde c_\varepsilon(p)\leq \tilde c_\varepsilon(X)$. Moreover, we have (compare with Lemma 6.2 in \cite{GG2})

\begin{lemma} \label{lemma_pdim}
For all $p \in \mathbb{N}$, it holds
	\[\tilde c_\varepsilon(p) = \inf_{X \in \mathcal{C}_p} \tilde c_\varepsilon(X).\]
\end{lemma}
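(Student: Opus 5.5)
The inequality $\tilde c_\varepsilon(p)\le \inf_{X\in\mathcal C_p}\tilde c_\varepsilon(X)$ was observed just before the statement, so only the reverse inequality needs proof. We will follow Lemma 6.2 of \cite{GG2}, replacing $W^{1,2}(M)$ by the closed subspace $Y$ throughout. Fix $\delta>0$ and choose $A\in\mathcal G_p$ with $\sup_A E_\varepsilon\le \tilde c_\varepsilon(p)+\delta$. The goal is an $X\in\mathcal C_p$ and an odd continuous map $h:X\to Y\setminus\{0\}$ with $\sup_{h(X)}E_\varepsilon\le \tilde c_\varepsilon(p)+2\delta$.

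\textbf{Step 1 (finite-dimensional reduction).} Since $E_\varepsilon$ is continuous on $W^{1,2}$ and $A$ is compact, there is an $\eta>0$ such that $E_\varepsilon\le \sup_A E_\varepsilon+\delta$ on the closed $\eta$-neighbourhood $N_\eta(A)$ in $Y$. Take $\eta$ smaller than $\frac12\operatorname{dist}(A,0)$, so that $N_\eta(A)$ avoids $0$. Cover $A$ by finitely many $\eta$-balls in $Y$ and let $V\subset Y$ be the finite-dimensional span of their centres; enlarging, we may take the set of centres symmetric. Using a $\mathbb Z_2$-equivariant partition of unity, build an odd continuous map $r:A\to V\cap N_\eta(A)$ with $\|r(u)-u\|<\eta$; oddness comes from symmetrizing, $u\mapsto \frac12(r(u)-r(-u))$. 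Because $Y$ is a linear space, all these combinations stay in $Y$, so the mean-zero constraint costs nothing. By monotonicity, $B:=r(A)$ is a compact invariant subset of the finite-dimensional space $V\setminus\{0\}$ with $\operatorname{ind}_{\mathbb Z_2}(B)\ge p+1$.

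\textbf{Step 2 (cubical approximation).} By the continuity property of the index, $B$ has a closed invariant neighbourhood $U\subset V\setminus\{0\}$, which we may take inside $N_\eta(A)$, with $\operatorname{ind}_{\mathbb Z_2}(U)=\operatorname{ind}_{\mathbb Z_2}(B)$. Identify $V\cong\mathbb R^m$, rescale a large cube so that the antipodal map becomes $\tau_m$ about the centre, and take $k$ large. Let $X'$ be the union of all cells of $I^m(k)$ meeting $B$; for fine $k$ this is a symmetric subcomplex with $B\subset X'\subset U$, so $\operatorname{ind}_{\mathbb Z_2}(X')\ge p+1$. The map $h$ will be the inclusion, an affine and hence odd map into $V\subset Y$, and $E_\varepsilon\le \tilde c_\varepsilon(p)+2\delta$ on $X'$.

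\textbf{Step 3 (reducing dimension to $p$; main obstacle).} $X'$ has dimension $m$, possibly larger than $p$, and must be replaced by a $p$-dimensional subcomplex of index at least $p+1$. I would argue as in \cite{GG2}. If $\operatorname{ind}(X')\ge p+1$, then the classifying class $f^*\mu^p$ on $X'/\mathbb Z_2$ is nonzero. For a finite CW complex this class is detected on the $p$-skeleton, because restriction $H^p(T)\to H^p(T^{(p)})$ is injective by cellular cohomology. The $p$-skeleton of $X'$ is a symmetric subcomplex, since $\tau$ permutes cells, and its classifying map is the restriction. Hence the $p$-skeleton $X$ satisfies $\operatorname{ind}_{\mathbb Z_2}(X)\ge p+1$, and $X\in\mathcal C_p$ after reembedding it as a subcomplex of $I^m(k)$ (it already is one).

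The care needed is in identifying Alexander--Spanier cohomology with cellular cohomology, which holds for finite complexes, and in checking that the index really is at least $p+1$, that is, $\dim X\ge p$. The latter follows because the index of a $\mathbb Z_2$-space is at most its dimension plus one. Letting $\delta\to0$ gives the lemma.
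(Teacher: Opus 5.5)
Your proposal is correct and follows essentially the same route as the paper. The three steps match: an odd finite-dimensional approximation of a near-optimal $A\in\mathcal G_p$ (you build it directly with a symmetrized partition of unity, where the paper cites Proposition 3.1 of Marzocchi); an odd affine identification of a cube in that subspace, so that the union of small cells meeting the approximating set is a symmetric subcomplex inside a low-energy neighbourhood; and passage to the $p$-skeleton $X$, using $H^p(X',X;\mathbb Z_2)=0$ to keep the index at least $p+1$. Your remark that $\operatorname{ind}_{\mathbb Z_2}\le\dim+1$ forces $X$ to be genuinely $p$-dimensional fills a small point the paper leaves implicit.
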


\begin{proof}
Given $\delta>0$, let $A_0 \in \mathcal{G}_p$ be such that $\sup E_\varepsilon(A_0) \leq \tilde c_\varepsilon(p) + \delta/2$. Given an arbitrary neighborhood $U$ of $A_0$ in $Y$, we can find a subspace $E\subset Y$ with $m:=\dim E<+\infty$ and $A \subset U \cap E$ such that $\operatorname{ind}_{\mathbb{Z}_2}(A)=\operatorname{ind}_{\mathbb{Z}_2}(A_0)$ (see Proposition 3.1 in \cite{Marzocchi}). Choose a linear isomorphism $L:\mathbb{R}^m\to E$ and define an affine map
\[
T:I^m\to E,
\qquad
T(x):=L(2x-\mathbf 1),
\]
where $\mathbf 1=(1,\dots,1)\in\mathbb{R}^m$.
After rescaling $L$, we may assume that $T(I^m)$ is a cube in $Y$
containing $A$ in its interior. Moreover, $T(\tau x)=-T(x)$ for all $x\in I^m$.

Set $A':=T^{-1}(A)\subset I^m$ and $U':=T^{-1}(U)\subset I^m$. Then $A'$ is compact and invariant, $U'$ is an invariant neighborhood of $A'$,
and
\[
\operatorname{ind}_{\mathbb{Z}_2}(A')=\operatorname{ind}_{\mathbb{Z}_2}(A)=\operatorname{ind}_{\mathbb{Z}_2}(A_0)\ge p+1
\]
by monotonicity and the fact that $T|_{A'}:A'\to A$ is an odd homeomorphism.

Choose $k\in\mathbb{N}$ such that $T(\alpha)\subset U$ for every $m$-cell $\alpha\in I^m(k)_m$ with $\alpha\cap A'\neq\emptyset$.
Let $X_m$ be the union of all such $m$-cells. Then $X_m$ is a symmetric cubical subcomplex of $I^m(k)$, and $A'\subset X_m\subset U'$. Hence, if $U$ was chosen so that $\operatorname{ind}_{\mathbb{Z}_2}(\overline U)=\operatorname{ind}_{\mathbb{Z}_2}(A)$, then also $\operatorname{ind}_{\mathbb{Z}_2}(X_m)\ge p+1$.

Let $X$ be the $p$-skeleton of $X_m$. Then $X$ is a $p$-dimensional symmetric
cubical subcomplex of $I^m(k)$. Consider the odd continuous map $h:=T|_X:X\to Y$. Since $T(X)\subset U$, we have
\[
\sup_{x\in X}E_\varepsilon(h(x))
\le \sup E_\varepsilon(U).
\]
Therefore
\[
\tilde c_\varepsilon(X)\le \sup E_\varepsilon(U).
\]

It remains to check that $X\in\mathcal C_p$. It suffices to observe that $H^p(X_m,X;\mathbb{Z}_2)=0$. The exact cohomology sequence of the pair $(X_m,X)$ then implies that the inclusion
$X\hookrightarrow X_m$ induces an injective map
\[
H^p(X_m;\mathbb{Z}_2)\to H^p(X;\mathbb{Z}_2),
\]
and hence
\[
\operatorname{ind}_{\mathbb{Z}_2}(X)\ge \operatorname{ind}_{\mathbb{Z}_2}(X_m)\ge p+1.
\]
Thus $X\in\mathcal C_p$.
\end{proof}

\subsection{Proof of Theorem \ref{thm:wplp}}

Theorem \ref{thm:wplp} is a direct consequence of the following approximation result which is proved later in this section:

\begin{theorem} \label{thm:comp}
Fix $\tilde \sigma \in (0,\sigma/2)$. There exist positive constants $C$ and $\delta_0$ with the following property. Given $\delta_1 \in (0,\delta_0)$, there exists a sequence $\varepsilon_\nu=\varepsilon_\nu(\delta_1)\downarrow 0$ such that
\[
\tilde c_{\varepsilon_\nu}(p)\to \tilde c(p)=\liminf_{\varepsilon\to 0^+}\tilde c_\varepsilon(p),
\]
and such that for every $\nu$ sufficiently large and every $X \in \mathcal{C}_p$ with
\[
\tilde c_{\varepsilon_\nu}(X) \leq \tilde c_{\varepsilon_\nu}(p)+\varepsilon_\nu,
\]
there exists an even map $\Phi:X \to \mathcal H(M, \mathbb{Z}_2)$ which is continuous with respect to the flat topology and satisfies
\[
\sup_{x \in X} {\bf M}(\Phi(x)) \leq \frac{\tilde c_{\varepsilon_\nu}(p)+2\varepsilon_\nu}{4\tilde \sigma} + C\delta_1.
\]
Moreover, the map $\tilde \Phi:\tilde X \to \mathcal H(M, \mathbb{Z}_2)$ induced by $\Phi$ in the orbit space $\tilde X=X/\{x\sim \tau(x)\}$, is a half-volume $p$-sweepout.
\end{theorem}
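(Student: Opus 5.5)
We follow the scheme of Gaspar--Guaraco \cite{GG2}, Section 6, and add a half-volume correction at the end.

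\textbf{Step 1: choice of scales and discretization.} First choose $\varepsilon_\nu\downarrow0$ with $\tilde c_{\varepsilon_\nu}(p)\to\tilde c(p)$. Given $X\in\mathcal C_p$ and an odd $h\in\tilde\Gamma(X)$ with $\sup E_{\varepsilon_\nu}(h(X))\le \tilde c_{\varepsilon_\nu}(p)+2\varepsilon_\nu$, truncate $h(x)$ to $[-1,1]$; this lowers energy and preserves oddness, but it may destroy the mean-zero property. Then refine $X$ to $X(k)$. Using compactness of $h(X)$ in $W^{1,2}$ and the coarea formula applied to $\Psi(u)=\int_0^u\sqrt{W(s)/2}\,ds$, for each vertex $x$ pick a level $s_x\in(-\tilde\sigma',\tilde\sigma')$ such that $\Omega_x=\{h(x)>s_x\}$ has perimeter at most $(E_{\varepsilon}(h(x))+\varepsilon)/(4\tilde\sigma)$. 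The restriction $\tilde\sigma<\sigma/2$ enters exactly here: we use only levels in a subinterval whose $\Psi$-increment is $\ge 2\tilde\sigma$, so averaging gives a good level. Choose the levels equivariantly, $s_{\tau x}=-s_x$, so that $\Omega_{\tau x}=M\setminus\Omega_x$ up to a null set. For adjacent vertices, $\operatorname{Vol}(\Omega_x\Delta\Omega_y)$ is small (via the $L^1$ closeness of $\Psi(h(x))$ and $\Psi(h(y))$), i.e. the family is fine in flat norm. The Almgren interpolation of \cite{MN2}/\cite{GG2} then extends $\partial\Omega_x$ to a flat-continuous, even, no-concentration map $\Phi_0:X\to\mathcal B(M,\mathbb Z_2)$ whose mass increases by at most $C\delta_1$. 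Each value of $\Phi_0$ bounds a set $L^1$-close to some $\Omega_x$.

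\textbf{Step 2: near half-volume.} We need $d_{\mathfrak h}(\Phi_0(x))\le\eta(\delta_1,\nu)$, which is small. Since $\int_M h(x)=0$, $|h|\le$ (a bounded amount after truncation) and $W(h(x))/\varepsilon$ has integral bounded by the energy, the set $\{|h(x)\mp1|>\delta\}$ has volume $O(\varepsilon)$. Hence $\int h(x)\approx \operatorname{Vol}\{h>s\}-\operatorname{Vol}\{h<s\}$, which forces $|\operatorname{Vol}(\Omega_x)-\mathfrak h|\to0$ as $\nu\to\infty$. Interpolation keeps this defect small.

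\textbf{Step 3: half-volume correction map (main obstacle).} We must construct a map $R$ from $\{T\in\mathcal B:\ d_{\mathfrak h}(T)\le\eta,\ \mathbf M(T)\le \Lambda\}$, with no-concentration control, into $\mathcal H(M,\mathbb Z_2)$. It should be flat-continuous, commute with the complement symmetry, and add mass at most $C\delta_1$. First try this: fix a finite cover by small balls $B(q_i,r)$ and a partition of unity. To add or remove a volume $v$, union or subtract small geodesic balls whose radii depend continuously on $v$ and on the lifted set $\widehat\Omega$ from Subsection \ref{subsec:lifting}. Working on the double cover $\widehat X$ gives continuity and the equivariance $\widehat\Omega(\iota\hat x)=M\setminus\widehat\Omega(\hat x)$: remove volume from $\Omega$ exactly when we add it to the complement. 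The added boundary area is $O(\eta^{n/(n+1)})$, and no concentration is preserved. The subtle point is continuity when the needed adjustment vanishes or switches sign. We handle it using the monotone family $t\mapsto \Omega\cup B_t$ (or $\Omega\setminus B_t$), for which $\operatorname{Vol}$ is continuous and monotone in $t$. We then choose $t$ by the intermediate value theorem, taking the minimal such $t$, or better an averaged one to ensure continuity. Setting $\Phi=R\circ\Phi_0$ gives the mass bound.

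\textbf{Step 4: sweepout property.} The correction $R$ is homotopic, within $\mathcal B$, to the inclusion $\mathcal H\hookrightarrow\mathcal B$ restricted to its domain (shrink the added balls). Together with Corollary 8 of \cite{ZhouMazur0}, this reduces the claim to showing $\tilde\Phi_0^*\bar\lambda^p\neq0$. By \cite{GG2}, the double cover induced by $\tilde\Phi_0$ is isomorphic to $X\to\tilde X$, since the lift is $x\mapsto\Omega_x$ and $\Omega_{\tau x}$ is the complement. Its classifying class $w$ therefore satisfies $w^p\ne0$, because $\operatorname{ind}_{\mathbb Z_2}(X)\ge p+1$. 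Hence $\tilde\Phi^*\lambda^p\neq0$.
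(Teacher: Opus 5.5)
Your architecture matches the paper's: Gaspar--Guaraco discretization; mean zero implies almost half-volume; a correction done on the lifted family over $\widehat X_-=\{\operatorname{Vol}(\widehat\Omega)\le\mathfrak h\}$ and extended by complements; then homotopy invariance. Steps 1 and 4 essentially agree with it.

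\textbf{Step 3 is not yet a construction.} This step carries the whole difficulty. Your claim that the added area is $O(\eta^{n/(n+1)})$ presupposes that the ball you enlarge lies mostly outside $\Omega$ at small scales. With centers fixed by a cover of $M$, nothing prevents $\Omega$ from (almost) containing $B(q_i,s)$. In that case every radius realizing a volume increment $v$ is at least $s$, however small $v$ is. Also, if the partition of unity spreads the work over many balls, their areas add up.

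The missing ingredient is a compactness argument over the family:
\begin{itemize}
\item The set of Caccioppoli sets with $\mathbf M(\partial\Omega)\le\Lambda_0$ and $\operatorname{Vol}(\Omega)\le\mathfrak h$ is $L^1$-compact.
\item For each such $\Omega_0$, pick a density point of $M\setminus\Omega_0$ and a small radius $r_i$ at which the complement fills half the ball. A quarter of this room survives on an $L^1$-neighborhood of $\Omega_0$.
\item Extract a finite subcover, and patch over the $p$-dimensional parameter space $\widehat X_-$ with a partition of unity $\{\chi_i\}$ of multiplicity at most $p+1$. Give the $i$-th ball radius $\min\{r_i,(p+1)\chi_i t\}$.
\end{itemize}
Then at most $p+1$ small spheres are ever added, and a ball with guaranteed room is always available. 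On an infinite-dimensional set of cycles such a multiplicity bound is not available, which is another reason to work on $\widehat X_-$.

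Two further remarks on Step 3. First, you need not worry about choosing the root continuously. For $A_t$ nested in $t$ and $g(t)=\operatorname{Vol}(\Omega\cup A_t)$, one has $\operatorname{Vol}\bigl((\Omega\cup A_s)\,\Delta\,(\Omega\cup A_t)\bigr)=|g(s)-g(t)|$. So the corrected set is $L^1$-continuous for \emph{any} choice of root, and it equals $\Omega$ when the defect vanishes; this is exactly what makes the extension across $\{D=0\}$ continuous. Second, your area increment is controlled by the scale of the balls, not linearly by the defect. To get the stated $C\delta_1$ you must therefore run the interpolation with an accuracy finer than $\delta_1$.

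The paper's Proposition~\ref{prop:continuous_correction_lifted} avoids both issues. It moves volume by flows of $X_i=\nabla u_i$ with $\operatorname{div}X_i=-\phi_i^-+a_i\phi_i^+$: a sink at a density point of the complement and a source at a density point of $\Omega$. These fields are combined \emph{convexly}, so no areas add up. Volume is then strictly monotone in flow time, the time is unique and continuous with $|t|\le 4\rho/m_\ast$, and the mass and flat increments are linear in $\rho$.

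\textbf{Step 2 is internally inconsistent.} You use $\int_M h(x)=0$ together with $|h(x)|\le 1$. The truncation in Step 1 destroys the first, as you note yourself. Without truncation, $h$ is unbounded, and $\int_{B_\delta}|h|$ is not controlled by $\operatorname{Vol}(B_\delta)$. Superlevel sets at levels in $(-1,1)$ are unchanged by truncation, so truncating buys nothing here.

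What is actually needed is a tail estimate. The paper obtains it from the growth hypothesis (vii) via H\"older:
\[
\int_{\{|h|>\beta\}}|h|\le \operatorname{Vol}(M)^{1-1/q}\bigl(\varepsilon E_\varepsilon(h)/C_1\bigr)^{1/q}
\]
(Lemma~\ref{lem:meanzero_near_half_volume}). Alternatively, you could bound the change of mean under truncation by $\int_M(|h|-1)_+$, but this again requires a lower bound on $W$ outside $[-1,1]$.

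Finally, ``interpolation keeps the defect small'' needs the isoperimetric inequality to convert flat closeness into volume closeness (Lemma~\ref{lem:dhv_control_by_flat}). This step is routine, but it should be stated.
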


Let us show now how this result implies Theorem \ref{thm:wplp}.

\begin{proof}[Proof of Theorem \ref{thm:wplp}]
Fix $\tilde \sigma \in (0,\sigma/2)$ and $\delta_1\in(0,\delta_0)$, where $\delta_0$ is given by Theorem \ref{thm:comp}. By Theorem \ref{thm:comp}, there exists a sequence $\varepsilon_\nu\downarrow 0$ such that
\[
\tilde c_{\varepsilon_\nu}(p)\to \tilde c(p)=\liminf_{\varepsilon\to 0^+}\tilde c_\varepsilon(p).
\]
For each sufficiently large $\nu$, Lemma \ref{lemma_pdim} gives some $X_\nu\in\mathcal C_p$ such that
\[
\tilde c_{\varepsilon_\nu}(X_\nu)\le \tilde c_{\varepsilon_\nu}(p)+\varepsilon_\nu.
\]
Applying Theorem \ref{thm:comp} to $X_\nu$, we obtain
\[
\tilde \omega_p(M) \leq \frac{\tilde c_{\varepsilon_\nu}(p)+2\varepsilon_\nu}{4\tilde \sigma}+C \delta_1.
\]
Passing to the limit as $\nu\to\infty$, we obtain
\[
\tilde \omega_p(M) \leq \frac{\tilde c(p)}{4\tilde \sigma}+C\delta_1.
\]
Since $\delta_1>0$ is arbitrary, letting $\delta_1\downarrow 0$ gives
\[
\tilde \omega_p(M) \leq \frac{\tilde c(p)}{4\tilde \sigma}.
\]
Finally, as $\tilde \sigma \uparrow \sigma/2$,
\[
\tilde \omega_p(M) \leq \frac{1}{2\sigma}\tilde c(p)
= \frac{1}{2\sigma}\liminf_{\varepsilon \to 0^+} \tilde c_\varepsilon(p).
\]
\end{proof}

\subsection{Construction of a discrete map fine in the flat topology}

Gaspar and Guaraco described how to obtain discrete even maps into $\mathcal B(M,\mathbb{Z}_2)$ which are fine in the flat metric, from maps $h:X\to H^1(M)$ (see Section 6.7 in \cite{GG2}).

\begin{proposition}[Gaspar--Guaraco, \cite{GG2}]
\label{prop:GG_flat_fine_discrete}
Fix $\tilde\sigma\in(0,\sigma/2)$. Let $X^p$ be a symmetric cubical complex, equipped with the involution $\tau$ from Subsection~\ref{subsec:cell_complexes}, and let $h:X\to H^1(M)$ be continuous and odd. Denote $h_x := h(x)$ for $x \in X$. Fix $\varepsilon>0$ and set
\[
L_\varepsilon := \sup_{x\in X} E_\varepsilon(h(x))<\infty.
\]
Then there exist:
\begin{enumerate}
\item a number $\delta=\delta(\tilde\sigma)\in(0,1)$,
\item a subdivision level $j\in\mathbb{N}$,
\item and numbers $s_x\in(-1+\delta,\,1-\delta)$ for each vertex $x\in X(j)_0$
      satisfying $s_{\tau x}=-s_x$ and $\operatorname{Vol}(\{h_x=s_x\})=0$,
\end{enumerate}
such that the discrete map
\[
\phi_0:X(j)_0\to \mathcal B(M,\mathbb{Z}_2),
\qquad
\phi_0(x):=\partial\big[\![\{h_x>s_x\}]\!\big],
\]
has the following properties:
\begin{enumerate}
\item[(i)] \textbf{Evenness:} $\phi_0(\tau x)=\phi_0(x)$ for all $x\in X(j)_0$.
Moreover, setting $\Omega(x):=[\![\{h_x>s_x\}]\!]$ one has
$\Omega(x)+\Omega(\tau x)=[\![M]\!]$ in $\mathbf I_{n+1}(M;\mathbb{Z}_2)$.
\item[(ii)] \textbf{Mass bound:} for every $x\in X(j)_0$,
\[
\mathbf M(\phi_0(x)) \le \frac{E_\varepsilon(h_x)}{4\tilde\sigma}\le \frac{L_\varepsilon}{4\tilde\sigma}.
\]
\item[(iii)] \textbf{Flat-fineness:} there exists a constant $C=C(\tilde\sigma,W)>0$
such that for any pair $x,y\in X(j)_0$ lying in a common $p$-cell,
\[
\mathcal F\big(\phi_0(x),\phi_0(y)\big) \le C\varepsilon L_\varepsilon.
\]
\end{enumerate}
\end{proposition}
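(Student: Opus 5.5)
We follow the Gaspar--Guaraco construction (Section 6.7 of \cite{GG2}), built on the coarea formula for the Modica--Mortola type function $\Psi(t)=\int_0^t\sqrt{W(s)/2}\,ds$. The key pointwise inequality is $\frac\varepsilon2|\nabla u|^2+\frac{W(u)}{\varepsilon}\ge \sqrt{2W(u)}\,|\nabla u|=2|\nabla(\Psi\circ u)|$. By the coarea formula,
\[
E_\varepsilon(u)\ge 2\int_{-1}^{1}\sqrt{W(s)/2}\,\operatorname{Area}(\partial\{u>s\})\,ds .
\]
Choose $\delta=\delta(\tilde\sigma)$ so small that $\int_{-1+\delta}^{1-\delta}\sqrt{W/2}\ge 2\tilde\sigma$; this is possible since the full integral is $\sigma>2\tilde\sigma$. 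Then, for each $u$, the set of $s\in(-1+\delta,1-\delta)$ with $\operatorname{Area}(\partial\{u>s\})\le E_\varepsilon(u)/(4\tilde\sigma)$ has positive measure. Since $W$ is even, this set is symmetric under $s\mapsto -s$ when $u$ is replaced by $-u$.

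For each orbit $\{x,\tau x\}$ of vertices we choose $s_x$ in this good set, avoiding the countably many levels with $\operatorname{Vol}(\{h_x=s\})>0$, and set $s_{\tau x}=-s_x$. Since $h_{\tau x}=-h_x$, we get
\[
\{h_{\tau x}>s_{\tau x}\}=\{h_x<s_x\},
\]
which equals $M\setminus\{h_x>s_x\}$ up to a null set. This gives (i): $\Omega(x)+\Omega(\tau x)=[\![M]\!]$ mod 2, so $\phi_0(\tau x)=\phi_0(x)$. Property (ii) holds by the choice of $s_x$, together with $E_\varepsilon(h_x)\le L_\varepsilon$.

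Flat-fineness (iii) is the main obstacle. Since $h$ is continuous into $H^1$ and $X$ is compact, $h$ is uniformly continuous, so we pick $j$ such that $\|h_x-h_y\|_{L^1}$ is tiny for $x,y$ in a common cell of $X(j)$. The flat distance is bounded by $\operatorname{Vol}(\{h_x>s_x\}\,\Delta\,\{h_y>s_y\})$, which we split as
\[
\operatorname{Vol}(\{h_x>s_x\}\Delta\{h_x>s_y\})+\operatorname{Vol}(\{h_x>s_y\}\Delta\{h_y>s_y\}).
\]

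For the first term we use that $W\ge c_\delta>0$ on $[-1+\delta,1-\delta]$. Hence the volume of $\{-1+\delta\le h_x\le 1-\delta\}$ is at most $\varepsilon E_\varepsilon(h_x)/c_\delta\le C\varepsilon L_\varepsilon$, and the region between two levels inside $(-1+\delta,1-\delta)$ is contained in this set.

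The second term is controlled by a Chebyshev-type argument on the transition region. If $h_x>s_y\ge h_y$ at a point, then either $h_y$ or $h_x$ lies in $[-1+\delta/2,1-\delta/2]$, which costs at most $C\varepsilon L_\varepsilon$ in volume. Otherwise $|h_x-h_y|\ge\delta/2$ there, which costs at most $2\|h_x-h_y\|_{L^1}/\delta$. We choose $j$ so that the latter is at most $\varepsilon L_\varepsilon$.

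For the Chebyshev step to work, the good levels must stay a definite distance $\delta/2$ inside $(-1,1)$. This is guaranteed by taking the good-level interval to be $(-1+\delta,1-\delta)$ while using $\delta/2$ in the volume estimate.

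Combining the two bounds gives $\mathcal F(\phi_0(x),\phi_0(y))\le C\varepsilon L_\varepsilon$ with $C=C(\tilde\sigma,W)$, which is (iii).
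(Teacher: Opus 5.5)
Your proposal is correct and follows the same approach as the Gaspar--Guaraco construction (Section 6.7 of \cite{GG2}), which the paper cites without reproducing a proof. That approach has three steps: good levels are chosen in $(-1+\delta,1-\delta)$ via the coarea inequality $E_\varepsilon(u)\ge 2\int\sqrt{W/2}\,\operatorname{Per}(\{u>s\})\,ds$, the choice is made antisymmetric across each $\tau$-orbit, and flat fineness is bounded by $\operatorname{Vol}(\{h_x>s_x\}\Delta\{h_y>s_y\})$, controlled by the transition-region bound $\operatorname{Vol}(\{W(u)\ge c\})\le \varepsilon E_\varepsilon(u)/c$ together with a Chebyshev estimate on $\|h_x-h_y\|_{L^1}$.
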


\subsection{Almost half-volume superlevels}

In this subsection we show that if $u\in H^{1}(M) \setminus \{0\}$ satisfies $\int_M u =0$, then its superlevels near $\pm 1$ have to be almost half-volume.

\begin{lemma}\label{lem:meanzero_near_half_volume}
Fix $\delta\in(0,1)$ and let $q>1$, $C_1>0$, $\beta>1$ be as in {\rm (vii)} for the potential $W$. There exist constants $C_0=C_0(\delta)>0$ and $C_1'=C_1'(\delta)>0$ such that the following holds. If $u\in H^{1}(M) \setminus \{0\}$ satisfies $\int_M u =0$, then for every $t\in(-1+\delta,\,1-\delta)$,
\[
\Bigl|\operatorname{Vol}\bigl(\{u>t\}\bigr)-\mathfrak h\Bigr|
\ \le\ C_0\,\varepsilon\,E_\varepsilon(u)\ +\ C_1'\,\bigl(\varepsilon E_\varepsilon(u)\bigr)^{1/q}.
\]
\end{lemma}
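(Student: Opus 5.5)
Write $E:=E_\varepsilon(u)$ and $\eta:=\varepsilon E$, so that $\int_M W(u)\le \eta$. The plan is to show that $u$ is close to $\pm1$ outside a set of small volume, and then to feed this into the mean-zero condition. Fix $t\in(-1+\delta,1-\delta)$ and split $M$ into three pieces:
\[
A_+=\{u>t\},\qquad A_-=\{u\le t\},
\]
and the ``transition'' set $B=\{|u|<1-\delta/2\}\cup\{|u|>\beta\}$. Since $u$ is mean zero,
\[
0=\int_M u=\int_{A_+}u+\int_{A_-}u=\operatorname{Vol}(A_+)-\operatorname{Vol}(A_-)+\int_{A_+}(u-1)+\int_{A_-}(u+1).
\]
Using $\operatorname{Vol}(A_+)+\operatorname{Vol}(A_-)=2\mathfrak h$, this gives
\[
2\bigl|\operatorname{Vol}(A_+)-\mathfrak h\bigr|\le \int_{A_+}|u-1|+\int_{A_-}|u+1|.
\]
It therefore suffices to bound $\int_{A_+}|u-1|$ and $\int_{A_-}|u+1|$ by $C_0\eta+C_1'\eta^{1/q}$.

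To do this, decompose $A_\pm$ according to the size of $u$. On $A_+\cap\{1-\delta/2\le u\le\beta\}$, $u$ lies in a compact set away from the other well. Non-degeneracy of the minimum together with positivity of $W$ away from $\pm1$ gives $W(s)\ge c_\delta (s-1)^2\ge c_\delta'|s-1|^{2}$ there. Hölder's inequality then bounds this contribution by $C\eta^{1/2}$. Alternatively, one can use the cruder bound $|s-1|\le C_\delta W(s)/\min(\ldots)$ on the part where $|s-1|\ge$ some threshold, and estimate the rest trivially. I will aim for a bound linear or power-type in $\eta$, since $\eta^{1/2}\le \eta+\eta^{1/q}$-type absorption needs care; see the obstacle below.

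On $A_+\cap\{t<u<1-\delta/2\}$, $u$ stays at distance at least $\delta/2$ from both wells, so $W(u)\ge m_\delta>0$. Hence this set has volume at most $\eta/m_\delta$, and $|u-1|\le 2$ on it, giving a contribution $\le C_0\eta$. On $\{|u|>\beta\}$, (vii) gives $W(u)\ge C_1|u|^q$. Thus $\int_{\{|u|>\beta\}}|u|^q\le \eta/C_1$ and $\operatorname{Vol}\le \eta/(C_1\beta^q)$, and Hölder gives $\int_{\{|u|>\beta\}}|u\mp1|\le C\,(\eta)^{1/q}\operatorname{Vol}^{1-1/q}+\operatorname{Vol}\le C\eta+C\eta^{1/q}$. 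The set $A_-$ is treated symmetrically.

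The main obstacle is the near-well region $\{1-\delta/2\le u\le\beta\}$. There the naive quadratic bound gives $\eta^{1/2}$ rather than $\eta$ or $\eta^{1/q}$. Since $q>2$, we have $1/q<1/2$, so $\eta^{1/2}\le \eta^{1/q}$ when $\eta\le1$, and $\eta^{1/2}\le\eta$ when $\eta\ge1$. Hence $C\eta^{1/2}\le C(\eta+\eta^{1/q})$ always, and this region is absorbed into the constants $C_0,C_1'$. The constants depend only on $\delta$ (through $m_\delta$ and $c_\delta$), on $W$, and on $\operatorname{Vol}(M)$, which appears when Hölder is applied on $M$. This completes the plan.
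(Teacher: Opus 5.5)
Your proof is correct, but it takes a different route from the paper's. You cut $M$ exactly along the level $t$, so $\int_M u=0$ becomes the identity
\[
2\bigl(\operatorname{Vol}(\{u>t\})-\mathfrak h\bigr)=-\int_{\{u>t\}}(u-1)\,dV-\int_{\{u\le t\}}(u+1)\,dV .
\]
You then charge every deviation of $u$ from $\pm1$ to the potential energy:
\begin{itemize}
\item linearly where $u$ is at distance at least $\delta/2$ from both wells;
\item at rate $(\varepsilon E_\varepsilon(u))^{1/2}$ inside the wells, via $W(s)\ge c_\delta(s\mp1)^2$, which follows from nondegeneracy (iii) and $W>0$ off $\{\pm1\}$;
\item at rate $(\varepsilon E_\varepsilon(u))^{1/q}$ on $\{|u|>\beta\}$.
\end{itemize}
The middle rate is absorbed using $q>2$.

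The paper instead splits $M$ into the $\delta$-neighbourhoods $A^\pm_\delta(u)$ of the wells and the remainder $B_\delta(u)$. It sandwiches $\{u>t\}$ between $A^+_\delta(u)$ and $A^+_\delta(u)\cup B_\delta(u)$. In Claim 3 it uses only the pointwise bounds $u\ge 1-\delta$ on $A^+_\delta(u)$ and $u\le-(1-\delta)$ on $A^-_\delta(u)$, and never uses the energy inside the wells.

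Your route is the one that actually closes, because Claim 3 does not follow as written. The bound on $A^-_\delta(u)$ points the wrong way. The mean-zero identity only gives $(1-\delta)\operatorname{Vol}(A^+_\delta)-(1+\delta)\operatorname{Vol}(A^-_\delta)\le\int_{B_\delta}|u|$. This leaves an error of order $\delta\operatorname{Vol}(M)$ that $E_\varepsilon(u)$ does not control.

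For a concrete failure, take $0<s<\delta$. Let $u\approx 1+s$ on one region and $u\approx-(1-s)$ on the complementary region, joined across a layer of width about $\varepsilon$, placed so that $\int_M u=0$. Then $\operatorname{Vol}(B_\delta(u))$ and $\int_{B_\delta(u)}|u|$ are $O(\varepsilon)$. However, $|\operatorname{Vol}(A^+_\delta(u))-\mathfrak h|\approx\frac{s}{2}\operatorname{Vol}(M)$, so the inequality of Claim 3 fails.

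The lemma itself survives because this $u$ has $\varepsilon E_\varepsilon(u)\approx c\,s^2$ once $\varepsilon\ll s^2$. That is exactly what your quadratic bound near the wells detects. The same example shows the rate $(\varepsilon E_\varepsilon(u))^{1/2}$ is sharp. So your absorption step genuinely relies on $q\ge 2$, which (vii) provides via $2<q<11/5$; for $1<q<2$ the stated inequality would be false.

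You can delete the hedging remarks in your second paragraph and the unused set $B$.
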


\begin{proof}
Let
\[
\begin{aligned}
B_\delta(u)
&:=\Bigl\{x\in M:\ 
\min\bigl\{|u(x)-1|,\ |u(x)+1|\bigr\}\ge \delta\Bigr\},\\
A^\pm_\delta(u)
&:=\Bigl\{x\in M:\ |u(x)\mp 1|<\delta\Bigr\}.
\end{aligned}
\]
Then $M=A^+_\delta(u)\sqcup A^-_\delta(u)\sqcup B_\delta(u)$.

\smallskip
\noindent\emph{Claim 1.} There exists $m_\delta=m_\delta(\delta)>0$ such that
$W(s)\ge m_\delta$ whenever $\min\{|s-1|,|s+1|\}\ge\delta$. In particular,
\begin{equation}\label{eq:vol_Bdelta_bound}
\operatorname{Vol}\bigl(B_\delta(u)\bigr)\le \frac{\varepsilon}{m_\delta}\,E_\varepsilon(u).
\end{equation}
Indeed, using {\rm (vii)} we have $W(s)\ge C_1\beta^q$ for $|s|\ge \beta$, while on the compact set
$\{\,|s|\le\beta:\ \min\{|s-1|,|s+1|\}\ge\delta\,\}$ the continuous function $W$ attains a positive minimum
because $W>0$ away from $\{\pm1\}$. Taking $m_\delta$ to be the minimum of these two lower bounds gives the claim,
and \eqref{eq:vol_Bdelta_bound} follows from
\[
E_\varepsilon(u)\ \ge\ \int_{B_\delta(u)}\frac{W(u)}{\varepsilon}\,dV\ \ge\ \frac{m_\delta}{\varepsilon}\,\operatorname{Vol}\bigl(B_\delta(u)\bigr).
\]

\smallskip
Fix $t\in(-1+\delta,1-\delta)$ and set $\Omega_t:=\{u>t\}$.

\smallskip
\noindent\emph{Claim 2.} One has
\begin{equation}\label{eq:Omega_vs_Aplus_clean}
A^+_\delta(u)\subset \Omega_t\subset A^+_\delta(u)\cup B_\delta(u),
\qquad
\Omega_t\cap A^-_\delta(u)=\emptyset,
\end{equation}
and therefore
\begin{equation}\label{eq:Omega_vs_Aplus_vol}
\bigl|\operatorname{Vol}(\Omega_t)-\operatorname{Vol}(A^+_\delta(u))\bigr|\le \operatorname{Vol}\bigl(B_\delta(u)\bigr).
\end{equation}
Indeed, $u\ge 1-\delta>t$ on $A^+_\delta(u)$, so $A^+_\delta(u)\subset \Omega_t$.
Also $u\le -1+\delta<t$ on $A^-_\delta(u)$, so $\Omega_t\cap A^-_\delta(u)=\emptyset$.
Since $M=A^+_\delta(u)\sqcup A^-_\delta(u)\sqcup B_\delta(u)$, this implies
$\Omega_t\subset A^+_\delta(u)\cup B_\delta(u)$, proving \eqref{eq:Omega_vs_Aplus_clean}.
Taking volumes gives \eqref{eq:Omega_vs_Aplus_vol}.

\smallskip
\noindent\emph{Claim 3.} One has
\begin{equation}\label{eq:Aplus_half_clean}
\Bigl|\operatorname{Vol}\bigl(A^+_\delta(u)\bigr)-\mathfrak h\Bigr|
\le \frac12\,\operatorname{Vol}\bigl(B_\delta(u)\bigr)+\frac{1}{2(1-\delta)}\int_{B_\delta(u)}|u|\,dV.
\end{equation}
Indeed, since $\int_M u=0$,
\[
0=\int_{A^+_\delta(u)}u+\int_{A^-_\delta(u)}u+\int_{B_\delta(u)}u.
\]
Using $u\ge 1-\delta$ on $A^+_\delta(u)$ and $u\le -(1-\delta)$ on $A^-_\delta(u)$, we obtain
\begin{equation}
(1-\delta)\,\bigl(\operatorname{Vol}(A^+_\delta(u))-\operatorname{Vol}(A^-_\delta(u))\bigr)
\le \int_{B_\delta(u)}|u|\,dV.
\end{equation}
Note that $\int_M (-u)=0$ and that
$A^+_\delta(-u)=A^-_\delta(u)$, $A^-_\delta(-u)=A^+_\delta(u)$. Applying the same argument to $-u$ yields the reverse inequality
\[
(1-\delta)\,\bigl(\operatorname{Vol}(A^-_\delta(u))-\operatorname{Vol}(A^+_\delta(u))\bigr)
\le \int_{B_\delta(u)}|u|\,dV.
\]
Hence,
\begin{equation}\label{eq:Aplus_minus_Aminus_abs}
\bigl|\operatorname{Vol}(A^+_\delta(u))-\operatorname{Vol}(A^-_\delta(u))\bigr|
\le \frac{1}{1-\delta}\int_{B_\delta(u)}|u|\,dV.
\end{equation}
Since $\operatorname{Vol}(A^+_\delta(u))+\operatorname{Vol}(A^-_\delta(u))=\operatorname{Vol}(M)-\operatorname{Vol}(B_\delta(u))$, we can write
\[
\operatorname{Vol}(A^+_\delta(u))-\mathfrak h
=\frac12\bigl(\operatorname{Vol}(A^+_\delta(u))-\operatorname{Vol}(A^-_\delta(u))\bigr)-\frac12\operatorname{Vol}(B_\delta(u)),
\]
and taking absolute values and using \eqref{eq:Aplus_minus_Aminus_abs} yields \eqref{eq:Aplus_half_clean}.

\smallskip
We now estimate $\int_{B_\delta(u)}|u|$. Splitting into bounded and unbounded parts,
\[
\int_{B_\delta(u)}|u|
\le \int_{B_\delta(u)\cap\{|u|\le \beta\}}|u|+\int_{\{|u|>\beta\}}|u|
\le \beta\,\operatorname{Vol}\bigl(B_\delta(u)\bigr)+\int_{\{|u|>\beta\}}|u|.
\]
By H\"older and {\rm (vii)},
\begin{align*}
\int_{\{|u|>\beta\}} |u|
&\le \operatorname{Vol}(M)^{1-1/q}\Bigl(\int_{\{|u|>\beta\}} |u|^q\Bigr)^{1/q}\\
&\le \operatorname{Vol}(M)^{1-1/q}\Bigl(\frac{1}{C_1}\int_M W(u)\Bigr)^{1/q}\\
&\le \operatorname{Vol}(M)^{1-1/q}\Bigl(\frac{\varepsilon}{C_1}E_\varepsilon(u)\Bigr)^{1/q}.
\end{align*}
Insert this bound into \eqref{eq:Aplus_half_clean} to obtain
\[
\begin{aligned}
\Bigl|\operatorname{Vol}\bigl(A^+_\delta(u)\bigr)-\mathfrak h\Bigr|
&\le \frac12\,\operatorname{Vol}\bigl(B_\delta(u)\bigr)
+\frac{1}{2(1-\delta)}
\Biggl[
\beta\,\operatorname{Vol}\bigl(B_\delta(u)\bigr)\\
&\qquad\qquad
+ \operatorname{Vol}(M)^{1-1/q}\Bigl(\frac{\varepsilon}{C_1}E_\varepsilon(u)\Bigr)^{1/q}
\Biggr].
\end{aligned}
\]
Finally combine this with \eqref{eq:Omega_vs_Aplus_vol} and use the triangle inequality:
\[
\Bigl|\operatorname{Vol}(\Omega_t)-\mathfrak h\Bigr|
\le \bigl|\operatorname{Vol}(\Omega_t)-\operatorname{Vol}(A^+_\delta(u))\bigr|
     +\Bigl|\operatorname{Vol}(A^+_\delta(u))-\mathfrak h\Bigr|
\]
\[
\le \operatorname{Vol}(B_\delta(u))
+\frac12\,\operatorname{Vol}(B_\delta(u))
+\frac{\beta}{2(1-\delta)}\,\operatorname{Vol}(B_\delta(u))
+\frac{\operatorname{Vol}(M)^{1-1/q}}{2(1-\delta)}\Bigl(\frac{\varepsilon}{C_1}E_\varepsilon(u)\Bigr)^{1/q}.
\]

\smallskip
To conclude, use \eqref{eq:vol_Bdelta_bound} to absorb $\operatorname{Vol}(B_\delta(u))$ into $\varepsilon E_\varepsilon(u)$, and set
\[
C_0:=\left(\frac32+\frac{\beta}{2(1-\delta)}\right)\frac{1}{m_\delta},
\qquad
C_1':=\frac{\operatorname{Vol}(M)^{1-1/q}}{2(1-\delta)}\,C_1^{-1/q}.
\]
This gives the stated estimate.
\end{proof}

\subsection{Interpolation and half-volume defect}

In this subsection we carry out the following steps:
\begin{enumerate}
\item[(i)] pass from the flat-fine discrete family given by Proposition \ref{prop:GG_flat_fine_discrete} to a mass-fine discrete family and then to a mass-continuous family in $\mathcal B(M,\mathbb{Z}_2)$;
\item[(ii)] show that this family stays in a small neighborhood of $\mathcal H(M,\mathbb{Z}_2)$, measured by the half-volume defect.
\end{enumerate}

\begin{lemma}\label{lem:dhv_control_by_flat}
There exists a constant $C_I>0$ such that for all $S,T\in \mathcal B(M,\mathbb{Z}_2)$,
\[
|d_{\mathfrak h}(S)-d_{\mathfrak h}(T)|
\le C_I\Bigl(\mathcal F(S,T)+\mathcal F(S,T)^{\frac{n+1}{n}}\Bigr).
\]
\end{lemma}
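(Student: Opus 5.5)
Write $f:=\mathcal F(S,T)$. Since $S,T\in\mathcal B(M,\mathbb{Z}_2)$, the difference $S-T$ (mod 2) is again a boundary cycle, so by definition of the flat norm we may write $S-T=R+\partial Q$ with $R$ an $n$-chain and $Q$ an $(n+1)$-chain, and $\mathbf M(R)+\mathbf M(Q)\le 2f$. The plan has three steps: turn $R$ into the boundary of a small-volume set, obtain Caccioppoli sets for $S$ and $T$ differing by small volume, and then compare half-volume defects.

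\emph{Step 1 (isoperimetric filling).} Because $R=S-T-\partial Q$ and $\partial Q$, $S$, $T$ are all boundaries, $R$ is a boundary cycle. Suppose $f$ is below a fixed threshold $\eta_M$, chosen so that the isoperimetric inequality on the closed manifold $M$ applies. Then $R=\partial Q'$ for some Caccioppoli set $Q'$ with
\[
\operatorname{Vol}(Q')\le C_{\mathrm{iso}}\,\mathbf M(R)^{\frac{n+1}{n}}.
\]
We choose the smaller of the two fillings; this is where the exponent $\frac{n+1}{n}$ enters. Consequently $S-T=\partial(Q+Q')$ mod 2.

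\emph{Step 2 (comparing Caccioppoli sets).} Pick $\Omega_T$ with $\partial\Omega_T=T$ and set $\Omega_S:=\Omega_T+Q+Q'$ as mod 2 currents, i.e.\ the symmetric difference of sets. Then $\partial\Omega_S=T+(S-T)=S$, so $\Omega_S$ is a valid choice of Caccioppoli set in Definition~\ref{def:dhv} for $S$. Since $Q$ is an integral mod 2 $(n+1)$-chain in $M$, it is (the current of) a set of volume $\mathbf M(Q)$. Hence
\[
|\operatorname{Vol}(\Omega_S)-\operatorname{Vol}(\Omega_T)|\le \mathbf M(Q)+\operatorname{Vol}(Q')\le 2f+C_{\mathrm{iso}}(2f)^{\frac{n+1}{n}}.
\]

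\emph{Step 3 (comparing defects).} By the reverse triangle inequality and the independence of $d_{\mathfrak h}$ from the choice of set,
\[
|d_{\mathfrak h}(S)-d_{\mathfrak h}(T)|\le|\operatorname{Vol}(\Omega_S)-\operatorname{Vol}(\Omega_T)|.
\]
This yields the claimed bound whenever $f<\eta_M$.

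\emph{Large flat distance.} If $f\ge\eta_M$, we use that both defects lie in $[0,\mathfrak h]$, so
\[
|d_{\mathfrak h}(S)-d_{\mathfrak h}(T)|\le\mathfrak h\le(\mathfrak h/\eta_M)\,f .
\]
Taking $C_I$ as the maximum of the constants from the two cases finishes the proof.

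The main obstacle is Step 1. One must justify that the flat-norm decomposition can be taken within mod 2 flat chains with $Q$ a genuine Caccioppoli set, and that the residual cycle $R$ admits a filling whose volume is controlled by $\mathbf M(R)^{(n+1)/n}$ rather than merely $\le\mathfrak h$. For this I would invoke the standard isoperimetric inequality for small-mass boundaries on closed manifolds, as in Almgren's isomorphism or the filling lemmas used in \cite{MN2}. Of the two fillings, exactly one has small volume, and that is the one we choose.
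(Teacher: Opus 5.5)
Your proof is correct and follows the paper's argument. You decompose $S-T=R+\partial Q$, fill the boundary cycle $R$ using the isoperimetric inequality, set $\Omega_S=\Omega_T+Q+Q'$, and bound the defect difference by the volume of the symmetric difference. Your separate treatment of $\mathcal F(S,T)\ge\eta_M$ is harmless but not needed: the paper applies the isoperimetric inequality directly, which works for all masses on a closed manifold once the smaller filling is chosen. Using $2f$ in place of $\mathcal F(S,T)+\tau$ with $\tau\downarrow 0$ only changes constants.
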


\begin{proof}
Choose $\Omega_T\in \mathcal C(M)$ with $\partial\Omega_T=T$.
By the definition of the flat norm, for any $\tau>0$ there exist
$R\in \mathbf I_n(M;\mathbb{Z}_2)$ and $Q\in \mathbf I_{n+1}(M;\mathbb{Z}_2)$ such that
\[
S-T = R+\partial Q,
\qquad
\mathbf M(R)+\mathbf M(Q)\le \mathcal F(S,T)+\tau.
\]

Since $Q$ is a top-dimensional integral current mod $2$ on the closed manifold $M$,
there exists a Caccioppoli set $\Omega_Q\in \mathcal C(M)$ such that
$Q=[\![\Omega_Q]\!]$ in $\mathbf I_{n+1}(M;\mathbb{Z}_2)$, hence $\operatorname{Vol}(\Omega_Q)=\mathbf M(Q)$ and $\partial\Omega_Q=\partial Q$.

Moreover, since $S,T\in \mathcal B(M,\mathbb{Z}_2)$ and $\partial Q\in \mathcal B(M,\mathbb{Z}_2)$, we have
\[
R=(S-T)-\partial Q\in \mathcal B(M,\mathbb{Z}_2).
\]
By the isoperimetric inequality \cite{Isoperimetric1, Isoperimetric2, Isoperimetric3}, there exists $U_R\in \mathcal C(M)$ such that $\partial U_R = R$ and
\[
\operatorname{Vol}(U_R)\le C_{\mathrm{iso}}\,\mathbf M(R)^{\frac{n+1}{n}},
\]
where $C_{\mathrm{iso}}>0$.

Define $U:=U_R+\Omega_Q$ (mod 2). Then $U\in\mathcal C(M)$ and
\[
\partial U=\partial U_R+\partial\Omega_Q = R+\partial Q = S-T.
\]
Set $
\Omega_S:=\Omega_T+U$ (mod 2),
so $\Omega_S\in\mathcal C(M)$ and $\partial\Omega_S=T+(S-T)=S$.

By the definition of the half-volume defect and triangle inequality, we obtain
\[
|d_{\mathfrak h}(S)-d_{\mathfrak h}(T)|
= \bigl||\operatorname{Vol}(\Omega_S)-\mathfrak h|-|\operatorname{Vol}(\Omega_T)-\mathfrak h|\bigr|
\le |\operatorname{Vol}(\Omega_S)-\operatorname{Vol}(\Omega_T)|
\le \operatorname{Vol}(U).
\]
Since $\operatorname{Vol}(U)\le \operatorname{Vol}(U_R)+\operatorname{Vol}(\Omega_Q)$, we get
\[
|d_{\mathfrak h}(S)-d_{\mathfrak h}(T)|
\le C_{\mathrm{iso}}\mathbf M(R)^{\frac{n+1}{n}}+\mathbf M(Q)
\le C_{\mathrm{iso}}(\mathcal F(S,T)+\tau)^{\frac{n+1}{n}}+(\mathcal F(S,T)+\tau).
\]
Letting $\tau\downarrow 0$ yields the claim.
\end{proof}

We now pass from the discrete flat-fine family produced by Proposition
\ref{prop:GG_flat_fine_discrete} to a mass-continuous family in $\mathcal B(M,\mathbb{Z}_2)$, and then
show that the half-volume defect remains small.

Fix $\tilde \sigma\in (0,\sigma/2)$, let $X^p$ be a symmetric cubical complex, and let $h:X\to Y$ be continuous and odd. Set
\[
L_\varepsilon:=\sup_{x\in X} E_\varepsilon(h(x)).
\]
Let $\phi_0:X(j)_0\to \mathcal B(M,\mathbb{Z}_2)$ be the even discrete map given by Proposition
\ref{prop:GG_flat_fine_discrete}, and let $\delta\in (0,1)$ be the corresponding parameter.

Define
\[
\eta_\varepsilon:= C_0(\delta)\,\varepsilon L_\varepsilon + C_1'(\delta)\,(\varepsilon L_\varepsilon)^{1/q},
\]
where $C_0(\delta),C_1'(\delta)$ are the constants from Lemma
\ref{lem:meanzero_near_half_volume}. Then for every $x\in X(j)_0$,
\begin{equation}\label{eq:GG_vertex_defect_bound}
d_{\mathfrak h}(\phi_0(x))\le \eta_\varepsilon.
\end{equation}
Indeed, if $\phi_0(x)=\partial[\![\{h_x>s_x\}]\!]$ as in Proposition
\ref{prop:GG_flat_fine_discrete}, then $\int_M h_x=0$ and
$s_x\in (-1+\delta,1-\delta)$, so Lemma \ref{lem:meanzero_near_half_volume} gives
\[
\Bigl|\operatorname{Vol}(\{h_x>s_x\})-\mathfrak h\Bigr|\le \eta_\varepsilon,
\]
and the claim follows from the definition of $d_{\mathfrak h}$.

For convenience, set
\[
\Psi(t):= t+t^{\frac{n+1}{n}}, \qquad t\ge 0.
\]

Recall that the \emph{fineness} of a map $\phi:X(j)_0 \to \mathcal Z(M,\mathbb{Z}_2)$ is defined by
	\[{\bf f}(\phi) = \sup\left\{ {\bf M}(\phi(x)-\phi(y)) : x,y \in X(j)_0 \ \mbox{adjacent vertices} \right\}.\]
This notion can be thought of as the discrete counterpart of the modulus of continuity of a map into $\mathcal Z(M,\mathbb{Z}_2)$ with respect to the mass topology. Similarly, we can consider the \textit{flat fineness} ${\bf f}_{\mathcal{F}}(\phi)$ of a discrete map by replacing the mass with $\mathcal{F}$ in the definition above.

\begin{proposition}\label{prop:interpolation_package_B}
There exist constants $\delta_{\mathcal F,\ast}>0, \delta_\ast>0, C_A>0$
with the following property.

Let $\phi_0:X(j)_0\to \mathcal B(M,\mathbb{Z}_2)$ be an even discrete map satisfying $\mathbf f_{\mathcal F}(\phi_0)\le \delta_{\mathcal F,\ast}$. Then for every $\delta_1\in (0,\delta_\ast)$, after passing to a further subdivision
$X(j+\ell)$, there exist an even discrete map $\phi:X(j+\ell)_0\to \mathcal B(M,\mathbb{Z}_2)$ and an even mass-continuous map $\Phi:X\to \mathcal B(M,\mathbb{Z}_2)$, such that:
\begin{enumerate}
\item[(i)] $\Phi|_{X(j+\ell)_0}=\phi$;
\item[(ii)] $\mathbf f(\phi)\le \delta_1$;
\item[(iii)]
\[
\sup_{x\in X(j+\ell)_0}\mathbf M(\phi(x))
\le \sup_{y\in X(j)_0}\mathbf M(\phi_0(y))+\delta_1;
\]
\item[(iv)] there exists a nearest-vertex projection $\mathbf n:X(j+\ell)_0\to X(j)_0$ such that
\[
\mathcal F\bigl(\phi(x),\phi_0(\mathbf n(x))\bigr)\le \delta_1
\]
for all $x\in X(j+\ell)_0$;
\item[(v)] for every $p$-cell $\alpha$ of $X(j+\ell)$ and every $x,y\in \alpha$,
\[
\mathbf M\bigl(\Phi(x)-\Phi(y)\bigr)\le C_A\,\delta_1.
\]
\end{enumerate}
\end{proposition}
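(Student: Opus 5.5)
The plan is to run the standard Almgren--Pitts discretization and interpolation machinery, as in Marques--Neves and as used by Gaspar--Guaraco in Section 6 of \cite{GG2}. The only new ingredient is to carry out each step equivariantly with respect to $\tau$. The steps are as follows.

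\textbf{Step 1 (flat-fine to mass-fine).} Apply the Marques--Neves discretization/interpolation lemma (Theorem 13.1 of \cite{MN} type, or its Zhou variant) to $\phi_0$. That result says: if adjacent vertices are $\mathcal F$-close with uniformly bounded mass and the family has no concentration of mass, then one can refine to $X(j+\ell)$ and build a discrete map $\phi$ with
\begin{itemize}
\item small mass fineness $\mathbf f(\phi)\le\delta_1$;
\item mass at most $\sup\mathbf M(\phi_0)+\delta_1$;
\item $\mathcal F(\phi(x),\phi_0(\mathbf n(x)))\le\delta_1$.
\end{itemize}
This gives (ii)--(iv).

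The concrete construction goes cell by cell. On each $p$-cell of $X(j)$ we use the isoperimetric choice of fillings $Q$ with $\mathbf M(Q)$ small: $\phi_0(x)-\phi_0(y)=\partial Q$ with $\mathbf M(Q)\le C\mathbf f_{\mathcal F}(\phi_0)$ once $\mathbf f_{\mathcal F}(\phi_0)\le\delta_{\mathcal F,\ast}$. We then cut $Q$ by a fine grid of level sets of distance functions and sweep through the pieces. Coarea keeps the slices of mass $\le\delta_1$.

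Evenness is arranged as follows. Pick a set $S$ of representatives of $\tau$-orbits of cells of $X(j)$; the action is free on cells since $\tau$ has no fixed cells in a symmetric subcomplex with free action. Perform the construction skeleton by skeleton on cells of $S$, and define $\phi$ on $\tau\alpha$ by $\phi(\tau x):=\phi(x)$. Since $\phi_0$ is even, the boundary data on $\tau\alpha$ agree, so the definitions are compatible. The nearest-vertex projection is likewise chosen $\tau$-equivariantly.

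\textbf{Step 2 (discrete to mass-continuous).} Apply the Almgren extension (Marques--Neves, Theorem 14.1 of \cite{MN}) to $\phi$. It requires $\mathbf f(\phi)\le\delta_\ast$, where $\delta_\ast$ depends only on $M$. It yields a mass-continuous $\Phi:X\to\mathcal Z(M,\mathbb Z_2)$ with $\Phi|_{X(j+\ell)_0}=\phi$, giving (i). On each $p$-cell $\alpha$ of $X(j+\ell)$ it satisfies $\mathbf M(\Phi(x)-\Phi(y))\le C_A\mathbf f(\phi)\le C_A\delta_1$, giving (v).

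The Almgren extension is local: $\Phi$ on a cell depends only on $\phi$ at its vertices, via the canonical maps $\Delta_\alpha$. So we again define it on orbit representatives and extend by $\Phi(\tau x)=\Phi(x)$. This is consistent on faces because $\tau$ maps cells to cells compatibly with $\Delta_\alpha$ up to the reflection $t\mapsto 1-t$. If the extension is not symmetric under that reflection, we use instead the orbit-representative definition together with the compatibility relation $\Delta_\alpha\circ\Delta_{\hat\beta}=\Delta_\beta$ on shared faces.

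Values stay in $\mathcal B(M,\mathbb Z_2)$ because $\Phi$ is flat continuous, $X$ is connected componentwise, and each component meets $\phi$. Since $\mathcal B$ is a connected component of $\mathcal Z$ and it contains all vertices, every value lies in $\mathcal B$.

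\textbf{Main obstacle.} The expected difficulty is equivariance in Step 1. Near a $\tau$-orbit the interpolation choices, namely the filling $Q$ and the cutting radii, must be made once per orbit. One must also check that gluing along faces shared between a cell and the $\tau$-image of a neighboring cell does not create conflicts. The free action handles the first point. For the second, I would do the construction inductively over skeleta on the quotient $\tilde X(j)$, which is a genuine cell complex, and then pull back. The constants $\delta_{\mathcal F,\ast},\delta_\ast,C_A$ come directly from the cited Marques--Neves lemmas and depend only on $M$ and $p$.
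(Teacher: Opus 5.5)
Your overall route coincides with the paper's. Its proof is only the citation ``flat-to-mass interpolation applied to $\phi_0$, then Almgren extension'', plus the remark that the construction only adds boundaries, so $\Phi(X)\subset\mathcal B(M,\mathbb{Z}_2)$; your connected-component argument is an equally good way to see this. For evenness, only your last suggestion (build everything on $\tilde X$ and pull back) is sound. Defining on orbit representatives and setting $\Phi(\tau x):=\Phi(x)$ can clash on a face $\beta$ of a representative cell when $\tau\beta$ represents the orbit of $\beta$, since neither construction is invariant under $t\mapsto 1-t$.

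The step that genuinely fails is ``this gives (ii)--(iv)'' with $\delta_{\mathcal F,\ast}$ depending only on $M$ and $p$. In fact (ii) and (iv) together rule out any fixed nonconstant $\phi_0$. If $a,b\in X(j)_0$ span an edge of $X(j)$, the nearest-vertex projection switches from $a$ to $b$ between two adjacent vertices $x,y$ of $X(j+\ell)$ on that edge, so
\[
\mathcal F\bigl(\phi_0(a),\phi_0(b)\bigr)\le \mathcal F\bigl(\phi_0(a),\phi(x)\bigr)+\mathbf M\bigl(\phi(x)-\phi(y)\bigr)+\mathcal F\bigl(\phi(y),\phi_0(b)\bigr)\le 3\delta_1 .
\]
Letting $\delta_1\downarrow 0$ with $\phi_0$ fixed forces $\phi_0(a)=\phi_0(b)$. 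The Marques--Neves discretization theorem says nothing of this kind: its flat estimate compares $\phi_i(x)$ with a \emph{flat-continuous} map evaluated at the same point $x$, which you do not have here. For discrete input the relevant tool is Pitts' interpolation lemma, whose flat threshold $\varepsilon(L,\eta,T)$ depends on the accuracy $\eta$ and on the reference cycle $T$.

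So the proposition cannot be proved as written, and the paper's one-line citation has the same defect. What can be proved is the version in which the flat threshold may depend on $\delta_1$ and on the cycles involved. This is also what the application actually uses, since there $\nu$ is taken large ``depending on $\delta_1$''; applying it along $\nu\to\infty$ then needs the threshold to be uniform over the level-set families, which is a separate issue. For the subsequent half-volume defect estimate it would even suffice to have $\delta_1+C\,\mathbf f_{\mathcal F}(\phi_0)$ on the right-hand side of (iv), because $\mathbf f_{\mathcal F}(\phi_0)\le C\varepsilon L_\varepsilon\to 0$.

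Your ``concrete construction'' would also not give (iii). If $\phi_0(b)-\phi_0(a)=\partial Q$, then $\phi_0(a)+\partial(Q\llcorner U)$ is, up to a slice, $\phi_0(a)\llcorner(M\setminus U)+\phi_0(b)\llcorner U$. Its mass can exceed $\max\{\mathbf M(\phi_0(a)),\mathbf M(\phi_0(b))\}$ by an amount unrelated to the flat distance, however small the slices are. An example is two cycles differing by many tiny bubbles in disjoint regions, with $U$ swallowing the wrong region first. Pitts' lemma controls this using lower semicontinuity of mass and no concentration of mass at $T$, which is exactly why its threshold is not universal. Only $\delta_\ast$ and $C_A$, coming from the Almgren extension, depend on $M$ and $p$ alone.
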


In the application to Proposition \ref{prop:GG_flat_fine_discrete}, the hypothesis
$\mathbf f_{\mathcal F}(\phi_0)\le \delta_{\mathcal F,\ast}$ is ensured by taking $\varepsilon>0$ sufficiently
small, since Proposition \ref{prop:GG_flat_fine_discrete} gives a quantitative bound on the flat fineness of $\phi_0$.

\begin{proof}
This is the standard combination of the flat-to-mass interpolation theorem applied to $\phi_0$ and the Almgren extension theorem (see \cite{MarquesNevesInfinite, MarquesNevesWillmore, Zhou2015}) applied to the resulting mass-fine discrete map.

In the Almgren extension construction, values on a cell are built from the vertex values by adding boundaries of small filling chains. Then since $\phi_0\bigl(X(j)_0\bigr) \subset \mathcal B(M,\mathbb{Z}_2)$ and one only ever adds boundaries in the construction, we have $\Phi(X) \subset \mathcal B(M,\mathbb{Z}_2)$.
\end{proof}

\begin{proposition}\label{prop:defect_after_interpolation}
Assume the notation above, and let $\phi,\Phi$ be given by Proposition
\ref{prop:interpolation_package_B}. Then:
\begin{enumerate}
\item[(i)] for every $x\in X(j+\ell)_0$,
\[
d_{\mathfrak h}(\phi(x))
\le \eta_\varepsilon + C_I\,\Psi(\delta_1);
\]
\item[(ii)] for every $z\in X$,
\[
d_{\mathfrak h}(\Phi(z))
\le \eta_\varepsilon + C_I\,\Psi(\delta_1)+ C_I\,\Psi(C_A\delta_1).
\]
\end{enumerate}
\end{proposition}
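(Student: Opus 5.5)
The argument is a two-step triangle inequality. We combine the vertex bound \eqref{eq:GG_vertex_defect_bound} with the Lipschitz-type control of $d_{\mathfrak h}$ from Lemma~\ref{lem:dhv_control_by_flat}, which we rewrite as
\[
|d_{\mathfrak h}(S)-d_{\mathfrak h}(T)|\le C_I\,\Psi(\mathcal F(S,T)),\qquad S,T\in\mathcal B(M,\mathbb{Z}_2).
\]
Since $\Psi$ is increasing on $[0,\infty)$, any upper bound on the flat distance can be inserted directly into $\Psi$.

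For (i), fix $x\in X(j+\ell)_0$ and let $\mathbf n(x)\in X(j)_0$ be the nearest-vertex projection from Proposition~\ref{prop:interpolation_package_B}(iv). Both $\phi(x)$ and $\phi_0(\mathbf n(x))$ lie in $\mathcal B(M,\mathbb{Z}_2)$, and $\mathcal F(\phi(x),\phi_0(\mathbf n(x)))\le\delta_1$. Hence
\[
d_{\mathfrak h}(\phi(x))\le d_{\mathfrak h}(\phi_0(\mathbf n(x)))+C_I\Psi(\delta_1)\le \eta_\varepsilon+C_I\Psi(\delta_1),
\]
where the last step uses \eqref{eq:GG_vertex_defect_bound} at the vertex $\mathbf n(x)\in X(j)_0$. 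This step rests on the fact that $h$ takes values in $Y$, so Lemma~\ref{lem:meanzero_near_half_volume} applies at every vertex of $X(j)$.

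For (ii), fix $z\in X$. Since $X$ is $p$-dimensional and a union of $p$-cells, $z$ lies in some $p$-cell $\alpha$ of $X(j+\ell)$; this uses that $X$ is pure, which is the case for the complexes produced in Lemma~\ref{lemma_pdim}. A lower-dimensional cell is a face of some $p$-cell, so in any case we can pick a vertex $x\in\alpha\cap X(j+\ell)_0$. By Proposition~\ref{prop:interpolation_package_B}(v), $\mathbf M(\Phi(z)-\Phi(x))\le C_A\delta_1$. The flat norm is dominated by mass (take $R=\Phi(z)-\Phi(x)$, $Q=0$), so $\mathcal F(\Phi(z),\Phi(x))\le C_A\delta_1$. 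Both cycles lie in $\mathcal B(M,\mathbb{Z}_2)$ because $\Phi(X)\subset\mathcal B(M,\mathbb{Z}_2)$. Using $\Phi(x)=\phi(x)$ and part (i), we get
\[
d_{\mathfrak h}(\Phi(z))\le d_{\mathfrak h}(\phi(x))+C_I\Psi(C_A\delta_1)\le \eta_\varepsilon+C_I\Psi(\delta_1)+C_I\Psi(C_A\delta_1).
\]

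Two small points need care. First, the membership of all cycles involved in $\mathcal B(M,\mathbb{Z}_2)$ is required so that $d_{\mathfrak h}$ is defined and Lemma~\ref{lem:dhv_control_by_flat} applies; this is guaranteed by Proposition~\ref{prop:interpolation_package_B}. Second, the passage from the mass bound in (v) to a flat bound needs $\mathcal F\le\mathbf M$ together with the monotonicity of $\Psi$. Neither point is a real obstacle. The only substantive input is that $\mathbf n(x)$ is a genuine vertex of $X(j)$, where the mean-zero estimate is available.
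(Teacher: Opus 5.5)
Your proposal is correct and follows the paper's proof step for step. Part (i) combines the nearest-vertex flat bound with the flat-Lipschitz control of $d_{\mathfrak h}$ and the vertex defect bound, and part (ii) converts the cellwise mass bound into a flat bound via $\mathcal F\le\mathbf M$ before invoking (i); your extra remark that $z$ must lie in a $p$-cell (purity of $X$, which holds for the $p$-skeleta produced in the cubical-approximation lemma) makes explicit an assumption the paper uses tacitly.
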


\begin{proof}
(i) Fix $x\in X(j+\ell)_0$. By Proposition \ref{prop:interpolation_package_B} (iv) and Lemma \ref{lem:dhv_control_by_flat},
\[
d_{\mathfrak h}(\phi(x))
\le d_{\mathfrak h}\bigl(\phi_0(\mathbf n(x))\bigr)
   + C_I\Bigl(\mathcal F\bigl(\phi(x),\phi_0(\mathbf n(x))\bigr)
   + \mathcal F\bigl(\phi(x),\phi_0(\mathbf n(x))\bigr)^{\frac{n+1}{n}}\Bigr).
\]
Using \eqref{eq:GG_vertex_defect_bound} and
\[
\mathcal F\bigl(\phi(x),\phi_0(\mathbf n(x))\bigr)\le \delta_1,
\]
we obtain
\[
d_{\mathfrak h}(\phi(x))\le \eta_\varepsilon + C_I\,\Psi(\delta_1).
\]
(ii) Fix $z\in X$. Choose a $p$-cell $\alpha$ of $X(j+\ell)$ containing $z$ and a vertex
$x\in \alpha_0$. By Proposition \ref{prop:interpolation_package_B} (v),
\[
\mathbf M\bigl(\Phi(z)-\phi(x)\bigr)\le C_A\,\delta_1.
\]
Since $\mathcal F\le \mathbf M$, Lemma \ref{lem:dhv_control_by_flat} gives
\[
d_{\mathfrak h}(\Phi(z))
\le d_{\mathfrak h}(\phi(x))
   + C_I\Bigl(\mathcal F\bigl(\Phi(z),\phi(x)\bigr)
   + \mathcal F\bigl(\Phi(z),\phi(x)\bigr)^{\frac{n+1}{n}}\Bigr)
\]
\[
\le d_{\mathfrak h}(\phi(x)) + C_I\,\Psi(C_A\delta_1).
\]
Now apply part (i):
\[
d_{\mathfrak h}(\Phi(z))
\le \eta_\varepsilon + C_I\,\Psi(\delta_1)+ C_I\,\Psi(C_A\delta_1).
\]
The proof is complete.
\end{proof}

\subsection{Half-volume correction map}

The next proposition gives a continuous correction to exact half-volume for a compact family of Caccioppoli sets.

For $\Omega:Z\to\mathcal C(M)$ set
\[
T(z):=\partial\Omega(z)\in \mathcal B(M,\mathbb{Z}_2),
\qquad
\delta(z):=\mathfrak h-\operatorname{Vol}(\Omega(z))\in\mathbb{R},
\qquad
\Lambda:=\sup_{z\in Z}\mathbf M(T(z)).
\]

\begin{proposition}
\label{prop:continuous_correction_lifted}
Fix $\Lambda_0<\infty$. Let $Z$ be a compact metric space and let
\[
\Omega:Z\to \mathcal C(M)
\]
be $L^1$-continuous. Assume $\Lambda\le \Lambda_0$. Then there exist constants
\[
\rho_0=\rho_0(\Lambda_0)>0,
\qquad
C=C(\Lambda_0)>0
\]
such that, if
\[
\rho:=\sup_{z\in Z}|\delta(z)|\le \rho_0,
\]
there exists a map
\[
[0,1]\times Z\ni (a,z)\longmapsto \Omega_a(z)\in \mathcal C(M)
\]
with $T_a(z):=\partial\Omega_a(z)$ satisfying:
\begin{enumerate}
\item[(i)] $\Omega_0(z)=\Omega(z)$ and $\operatorname{Vol}(\Omega_1(z))=\mathfrak h$ for all $z\in Z$;
\item[(ii)] $\operatorname{Vol}(\Omega_a(z))=\operatorname{Vol}(\Omega(z))+a\,\delta(z)$ for all $(a,z)\in[0,1]\times Z$;
\item[(iii)] $(a,z)\mapsto \Omega_a(z)$ is $L^1$-continuous and $(a,z)\mapsto T_a(z)$ is flat-continuous;
\item[(iv)] $\mathcal F\bigl(T_a(z),T(z)\bigr)\le C\,\rho$ for all $(a,z)$;
\item[(v)] $\mathbf M\bigl(T_a(z)\bigr)\le \mathbf M\bigl(T(z)\bigr)+C\,\rho$ for all $(a,z)$;
\item[(vi)] if $\delta(z)=0$, then $\Omega_a(z)=\Omega(z)$ for all $a\in[0,1]$.
\end{enumerate}
\end{proposition}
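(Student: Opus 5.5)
The plan is to correct the volume by adding or removing small geodesic balls (or pieces of them) chosen from a fixed finite collection, with radii depending continuously on the required volume change. This is in the spirit of the volume-fixing constructions in \cite{ZhouMazur0}.

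\textbf{Step 1: a finite collection of disjoint balls.} Fix a small radius $r>0$. Choose finitely many points $q_1,\dots,q_N\in M$ such that the balls $B_i:=B(q_i,r)$ are pairwise disjoint. The number $N$ should be large, depending on $\Lambda_0$. Volume-wise, a set $\Omega$ cannot fill or avoid every $B_i$ only approximately. More precisely, I will use the relative isoperimetric inequality in each ball together with the bound $\mathbf M(\partial\Omega)\le\Lambda_0$, and argue by pigeonhole. For every $\Omega$ with perimeter at most $\Lambda_0$ and volume near $\mathfrak h$, some ball satisfies
\[
\operatorname{Vol}(\Omega\cap B_i)\ge c\operatorname{Vol}(B_i)
\quad\text{and}\quad
\operatorname{Vol}(B_i\setminus\Omega)\ge c\operatorname{Vol}(B_i),
\]
with $c=c(\Lambda_0)>0$. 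By $L^1$-continuity and compactness of $Z$, cover $Z$ by finitely many open sets $U_i$ on which ball $B_i$ works with constant $c/2$. Then take a partition of unity $\{\chi_i\}$ subordinate to this cover.

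\textbf{Step 2: the local volume-adjusting move.} Inside one ball $B_i$, to increase the volume by $s\ge0$ I replace $\Omega$ by $\Omega\cup B(q_i,\rho_i(s,\Omega))$. Here $\rho_i(s,\Omega)$ is the smallest radius $\rho$ with
\[
\operatorname{Vol}(\Omega\cup B(q_i,\rho))-\operatorname{Vol}(\Omega)=s .
\]
To decrease the volume I use $\Omega\setminus B(q_i,\rho)$ in the same way. This quantity is continuous and nondecreasing in $\rho$. It is strictly increasing wherever the annulus meets the complement in positive measure, so the smallest solution may jump. To avoid this discontinuity I will instead use a two-parameter interpolation that is continuous in $(s,\Omega)$ in $L^1$. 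One option is to take $\rho$ defined by $\operatorname{Vol}(B(q_i,\rho))=\lambda s$, with $\lambda$ a continuous function of $\Omega$ fixed by an intermediate value argument. An alternative is to first apply a fixed diffeomorphism flow supported in $B_i$ that monotonically expands or shrinks $\Omega\cap B_i$. In that version the volume derivative is bounded below by a positive constant, thanks to the $c/2$ lower bound, and the implicit function theorem (or monotonicity plus continuity) gives a continuous flow parameter $t_i(s,z)$.

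I will use the flow version. Let $\Phi^i_t$ be a flow of the radial conformal-type vector field in $B_i$, with $|t|\le t_0$. The function $t\mapsto\operatorname{Vol}(\Phi^i_t(\Omega))$ is then continuous and strictly monotone with derivative comparable to one. The mass changes by at most a factor $(1+C|t|)$, which yields (v). The flat distance is bounded by the volume of the swept region, which is at most $C|t|\Lambda_0$, and since $|t|\lesssim\rho$ this yields (iv).

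\textbf{Step 3: gluing the local moves.} Order the indices $1,\dots,N$ and apply the moves successively. At stage $i$, the flow in $B_i$ runs with parameter chosen so that the volume increases by $\chi_i(z)\,a\,\delta(z)$. Because the balls are disjoint, each stage changes only $\Omega\cap B_i$, and at most $|\delta|\le\rho$ volume is moved in total. Consequently the lower bounds $c/2$ persist in the balls not yet used, provided $\rho_0$ is small. Summing the increments gives (ii), and setting $a=1$ gives (i). When $\delta(z)=0$ all flow parameters vanish, which gives (vi). Continuity in $(a,z)$ follows from continuity of the solution $t_i$, which is the step I expect to be the main obstacle. Two facts need checking: that $z\mapsto\operatorname{Vol}(\Phi^i_t(\Omega(z)))$ is jointly continuous, and that strict monotonicity holds uniformly. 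Both should follow from the uniform lower bound on the derivative of the volume along the flow, via the area formula. Flat continuity of $T_a$ then follows from $L^1$-continuity, since for boundaries $\mathcal F(\partial\Omega,\partial\Omega')\le\operatorname{Vol}(\Omega\Delta\Omega')$.
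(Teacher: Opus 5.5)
There is a genuine gap at the heart of Step 2. For a vector field $X$ supported in $B_i$ one has $\int_{B_i}\operatorname{div}X\,dV=0$ and
\[
\frac{d}{dt}\operatorname{Vol}\bigl(\Phi^i_t(\Omega)\bigr)=\int_{\Phi^i_t(\Omega)\cap B_i}\operatorname{div}X\,dV .
\]
The two-sided bounds $\operatorname{Vol}(\Omega\cap B_i),\operatorname{Vol}(B_i\setminus\Omega)\ge c\operatorname{Vol}(B_i)$ give no control on the sign or size of this integral. A radial field $\phi(r)\partial_r$ about $q_i$ preserves every geodesic cone with vertex $q_i$. So if $\Omega\cap B_i$ is a half-ball, the volume is constant along your flow even though both fractions equal $\tfrac12$. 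If instead $\Omega\cap B_i$ sits where $\operatorname{div}X<0$, the volume initially decreases.

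Hence the uniform lower bound on the derivative is not available. You need it for the continuity of $t_i$, for (ii), and for the bound $|t|\lesssim\rho$ that drives (iv)--(v).

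Step 1 is also unjustified. The relative isoperimetric inequality bounds the smaller piece of $\Omega$ in $B_i$ from above by $\mathbf M(\partial\Omega\llcorner B_i)$, so it cannot force some ball of a disjoint family to be split. For instance, take a flat torus with disjoint balls centered on a suitable lattice. A slab bounded by two parallel flat tori running between rows of balls has half the volume and perimeter independent of $r$ and $N$, yet it fills or avoids every ball. A split ball can be forced by a chaining argument only for an overlapping cover, and overlap destroys the disjointness you use in Step 3.

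The missing idea is to choose the vector fields adapted to individual sets. A strictly signed volume rate is then an $L^1$-open condition, and compactness finishes the job. The paper proceeds as follows for each $\Omega_0$ in the $L^1$-compact set $\mathcal K$ of sets with $\mathbf M(\partial\Omega)\le\Lambda_0$ and $\operatorname{Vol}(\Omega)\in[\frac{\mathfrak h}{2},\frac{3\mathfrak h}{2}]$:
\begin{itemize}
\item pick density points $p_0^-$ of $M\setminus\Omega_0$ and $p_0^+$ of $\Omega_0$, with balls $B_0^\pm$ around them, where the radius at $p_0^+$ is so small that $M\setminus\Omega_0$ occupies a negligible part of $B_0^+$;
\item take the global field $X_0=\nabla u_0$ with $\Delta u_0=-\phi_0^-+a_0\phi_0^+$;
\item conclude $\int_{M\setminus\Omega}\operatorname{div}X_0\le-\frac{m_0}{4}$ for all $\Omega$ in an $L^1$-neighborhood of $\Omega_0$, and for short flow times.
\end{itemize}
Finitely many such neighborhoods cover $\mathcal K$. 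The convex combination $X(z)=\sum_i\chi_i(z)X_i$ then gives a single flow with $\partial_tF(z,t)\ge\frac{m_\ast}{4}$. Items (i)--(vi) follow by inverting $F$, much as you intended.

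Your sequential gluing could also work once each stage has such a uniform rate. A fixed radial field, selected only by volume fractions, cannot supply that rate.
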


\begin{proof}
\noindent\emph{Step 1: constructing vector fields.}
We will choose $\rho_0\le \mathfrak h/2$, so assume $\rho\le \mathfrak h/2$. Then
\[
\operatorname{Vol}(\Omega(z))=\mathfrak h-\delta(z)\in\Bigl[\frac{\mathfrak h}{2},\frac{3\mathfrak h}{2}\Bigr]
\]
for all $z\in Z$.
Set $\Theta(z):=M\setminus\Omega(z)\in\mathcal C(M)$, so $\Theta:Z\to\mathcal C(M)$ is $L^1$-continuous and
\[
\operatorname{Vol}(\Omega(z))>0,\qquad \operatorname{Vol}(\Theta(z))>0
\]
for all $z\in Z$.

Define the set
\[
\mathcal K:=\Bigl\{\Omega\in\mathcal C(M):\ \mathbf M(\partial\Omega)\le \Lambda_0,\ 
\operatorname{Vol}(\Omega)\in\Bigl[\frac{\mathfrak h}{2},\frac{3\mathfrak h}{2}\Bigr]\Bigr\}.
\]
Then $\Omega(Z)\subset\mathcal K$. By compactness for sets of finite perimeter (see Theorem 12.26 in \cite{Maggi}), $\mathcal K$ is compact in the $L^1$-topology.

\smallskip
\noindent\emph{(a) Local vector fields.}
Fix $\Omega_0\in\mathcal K$ and write $\Theta_0:=M\setminus\Omega_0$.
Choose Lebesgue density points $p_0^-\in \Theta_0, p_0^+\in \Omega_0$. Choose $r_0>0$ so small that the balls $B_0^\pm:=B(p_0^\pm,r_0)$ are embedded and satisfy $\overline{B_0^-}\cap \overline{B_0^+}=\emptyset$.
Set $r_0':=\frac12 r_0$ and $(B_0^\pm)':=B(p_0^\pm,r_0')$.
Define
\[
m_0^0:=\operatorname{Vol}\bigl(\Theta_0\cap (B_0^-)'\bigr)>0,
\qquad
\varepsilon_0^0:=\operatorname{Vol}\bigl(\Theta_0\cap B_0^+\bigr).
\]
Shrinking $r_0$ if necessary, we may assume that
\begin{equation}\label{eq:local_plus_small_balanced}
\varepsilon_0^0\le \frac{1}{32}\,m_0^0\,\frac{\operatorname{Vol}\bigl((B_0^+)'\bigr)}{\operatorname{Vol}(B_0^-)}.
\end{equation}

Choose cutoffs $\phi_0^\pm\in C^\infty_c(B_0^\pm)$ with
\[
0\le \phi_0^\pm\le 1,\qquad \phi_0^\pm\equiv 1 \text{ on } (B_0^\pm)'.
\]
Set
\[
a_0:=\frac{\int_M \phi_0^-\,dV}{\int_M \phi_0^+\,dV},
\qquad
f_0:=-\phi_0^-+a_0\phi_0^+,
\]
so that $\int_M f_0\,dV=0$. Let $u_0\in C^\infty(M)$ solve $\Delta u_0=f_0$ with $\int_M u_0\,dV=0$
(see e.g.\ Theorem 1 in \cite{Donaldson}), and define
\[
X_0:=\nabla u_0,\qquad \operatorname{div} X_0=f_0.
\]

Since $\Omega\mapsto \operatorname{Vol}((M\setminus\Omega)\cap (B_0^-)')$ and $\Omega\mapsto \operatorname{Vol}((M\setminus\Omega)\cap B_0^+)$
are continuous in the $L^1$-topology, there exists an $L^1$-neighborhood $U_0\subset\mathcal K$ of $\Omega_0$ such that,
for every $\Omega\in U_0$ (writing $\Theta:=M\setminus\Omega$),
\begin{equation}\label{eq:local_uniform_bounds_minus_plus}
\operatorname{Vol}\bigl(\Theta\cap (B_0^-)'\bigr)\ge m_0,
\qquad
\operatorname{Vol}\bigl(\Theta\cap B_0^+\bigr)\le \varepsilon_0,
\end{equation}
where we set $m_0:=\frac12\,m_0^0, \varepsilon_0:=2\,\varepsilon_0^0$. Moreover, since $\int_M\phi_0^-\le \operatorname{Vol}(B_0^-)$ and $\int_M\phi_0^+\ge \operatorname{Vol}((B_0^+)')$, we have
\[
a_0\le \frac{\operatorname{Vol}(B_0^-)}{\operatorname{Vol}((B_0^+)')},
\]
and therefore \eqref{eq:local_plus_small_balanced} implies
\begin{equation}\label{eq:local_eps_choice}
a_0\,\varepsilon_0\le \frac{m_0}{8}.
\end{equation}

\smallskip
\noindent\emph{(b) Finite patching over $\mathcal K$.}
The sets $\{U_0\}_{\Omega_0\in\mathcal K}$ form an open cover of the compact space $\mathcal K$.
Choose a finite subcover $U_1,\dots,U_N$, and for each $i$ fix a representative $\Omega_i\in\mathcal K$ and the associated objects
\[
B_i^\pm,\ (B_i^\pm)'\ ,\ \phi_i^\pm,\ a_i,\ f_i,\ X_i,\ m_i,\ \varepsilon_i,
\]
where $m_i,\varepsilon_i$ are the constants produced above for $\Omega_i$, so that the analogues of
\eqref{eq:local_uniform_bounds_minus_plus}--\eqref{eq:local_eps_choice} hold for all $\Omega\in U_i$.

Now define an open cover of $Z$ by $Z_i:=\Omega^{-1}(U_i)\subset Z$.
Choose a continuous partition of unity $\{\chi_i\}_{i=1}^N$ subordinate to $\{Z_i\}$:
\[
0\le \chi_i\le 1,\qquad \operatorname{supp} \chi_i\subset Z_i,\qquad \sum_{i=1}^N \chi_i\equiv 1.
\]
Define a smooth $z$-dependent vector field
\[
X(z):=\sum_{i=1}^N \chi_i(z)\,X_i\in C^\infty(M;TM).
\]
Let $\{\Phi_z^t\}_{|t|<t_\ast}$ be the flow of $X(z)$ (for some $t_\ast>0$ uniform in $z$),
and set
\[
\Theta_t(z):=\Phi_z^t(\Theta(z)),\qquad \Omega_t(z):=M\setminus \Theta_t(z),\qquad
F(z,t):=\operatorname{Vol}(\Omega_t(z))-\operatorname{Vol}(\Omega(z)).
\]

\smallskip
\noindent\emph{(c) Uniform strict monotonicity.}
Since $\Theta(z)$ is Caccioppoli and $X(z)$ is smooth, the transport formula (see e.g. Theorem 15.9 and Proposition 17.8 in \cite{Maggi})
yields that $t\mapsto \operatorname{Vol}(\Theta_t(z))$ (hence $t\mapsto F(z,t)$) is absolutely continuous and for a.e. $t$,
\[
\frac{d}{dt}\operatorname{Vol}(\Theta_t(z))=\int_{\Theta_t(z)} \operatorname{div} X(z)\,dV
=\sum_{i=1}^N \chi_i(z)\int_{\Theta_t(z)} \operatorname{div} X_i\,dV.
\]
For each fixed $i$, using $\operatorname{div} X_i=f_i=-\phi_i^-+a_i\phi_i^+$ and $0\le \phi_i^\pm\le 1$, we obtain for a.e. $t$,
\[
\int_{\Theta_t(z)} \operatorname{div} X_i\,dV
\le -\operatorname{Vol}\bigl(\Theta_t(z)\cap (B_i^-)'\bigr)+a_i\,\operatorname{Vol}\bigl(\Theta_t(z)\cap B_i^+\bigr).
\]
Hence for a.e. $t$,
\begin{equation}\label{eq:global_dvol_ineq}
\frac{d}{dt}\operatorname{Vol}(\Theta_t(z))
\le \sum_{i=1}^N \chi_i(z)\Bigl[-\operatorname{Vol}\bigl(\Theta_t(z)\cap (B_i^-)'\bigr)
+a_i\,\operatorname{Vol}\bigl(\Theta_t(z)\cap B_i^+\bigr)\Bigr].
\end{equation}

Define $m_\ast:=\min_{1\le i\le N} m_i>0$. We claim there exists $\tau\in(0,t_\ast)$, such that for every $i \in \{1, \ldots, N\}$, $z\in\operatorname{supp}\chi_i$ and $t\in[-\tau,\tau]$,
\begin{equation}\label{eq:uniform_time_bounds}
\operatorname{Vol}\bigl(\Theta_t(z)\cap (B_i^-)'\bigr)\ge \frac{m_i}{2},
\qquad
\operatorname{Vol}\bigl(\Theta_t(z)\cap B_i^+\bigr)\le 2\varepsilon_i.
\end{equation}
To prove this, let $\Delta_N:=\{\lambda\in[0,1]^N:\sum_j\lambda_j=1\}$ and for $\lambda\in\Delta_N$ set
\[
X_\lambda:=\sum_{j=1}^N \lambda_j X_j,
\qquad
\Phi_\lambda^t:=\text{flow of }X_\lambda.
\]
For fixed $i$ and fixed ball $B$, the map
\[
(\Omega,\lambda,t)\longmapsto \operatorname{Vol}\bigl(\Phi_\lambda^t(M\setminus\Omega)\cap B\bigr)
\]
is continuous on $\overline U_i\times\Delta_N\times[-t_\ast,t_\ast]$:
$(\lambda,t)\mapsto \Phi_\lambda^t$ depends smoothly on parameters on compact sets,
$\Omega\mapsto M\setminus\Omega$ is $L^1$-continuous, and $A\mapsto \operatorname{Vol}(A\cap B)$ is continuous in $L^1$ for fixed $B$.
At $t=0$, for every $\Omega\in U_i$ the bounds \eqref{eq:local_uniform_bounds_minus_plus} hold with $m_i,\varepsilon_i$.
By uniform continuity on the compact set $\overline U_i\times\Delta_N\times[-t_\ast,t_\ast]$, there exists $\tau_i\in(0,t_\ast)$
such that, for all $\Omega\in \overline U_i$, all $\lambda\in\Delta_N$, and all $t\in[-\tau_i,\tau_i]$,
the bounds \eqref{eq:uniform_time_bounds} hold.
Set $\tau:=\min_{1\le i\le N}\tau_i$.

Now fix $z\in Z$ and $t\in[-\tau,\tau]$. For each $i$ with $\chi_i(z)>0$ we have $z\in Z_i$, hence $\Omega(z)\in U_i$.
Applying \eqref{eq:uniform_time_bounds} and \eqref{eq:local_eps_choice} in \eqref{eq:global_dvol_ineq} yields, for a.e.\ $t\in[-\tau,\tau]$,
\[
\frac{d}{dt}\operatorname{Vol}(\Theta_t(z))
\le \sum_{i=1}^N \chi_i(z)\Bigl(-\frac{m_i}{2}+2a_i\varepsilon_i\Bigr)
\le \sum_{i=1}^N \chi_i(z)\Bigl(-\frac{m_i}{4}\Bigr)
\le -\frac{m_\ast}{4}.
\]
Equivalently,
\[
\frac{d}{dt}F(z,t)=-\frac{d}{dt}\operatorname{Vol}(\Theta_t(z))\ge \frac{m_\ast}{4}
\]
for a.e. $t\in[-\tau,\tau]$ and all $z\in Z$. Integrating and using absolute continuity yields: for all $-\tau\le t_1\le t_2\le \tau$ and $z\in Z$,
\begin{equation}\label{eq:global_strict_mono}
F(z,t_2)-F(z,t_1)\ge \frac{m_\ast}{4}(t_2-t_1).
\end{equation}
In particular, $t\mapsto F(z,t)$ is continuous and strictly increasing on $[-\tau,\tau]$, with $F(z,0)=0$.

\smallskip
\noindent\emph{Step 2: define the corrected family.}
Set $\rho_0:=\min\left\{\frac{\mathfrak h}{2},\,\frac{m_\ast}{4}\tau\right\}$, and assume $\rho\le \rho_0$.
For each $(z,s)\in Z\times[-\rho_0,\rho_0]$, continuity and strict monotonicity give a unique
$t=\theta(z,s)\in[-\tau,\tau]$ such that $F(z,t)=s$.

To see that $\theta$ is continuous, let $(z_k,s_k)\to(z,s)$ and $t_k:=\theta(z_k,s_k)\in[-\tau,\tau]$.
Passing to a subsequence we may assume $t_k\to t$. By continuity of $(z,t)\mapsto F(z,t)$ we have
\[
s=\lim_k s_k=\lim_k F(z_k,t_k)=F(z,t).
\]
Uniqueness from \eqref{eq:global_strict_mono} forces $t=\theta(z,s)$, hence $t_k\to\theta(z,s)$.

Define
\[
t(a,z):=\theta\bigl(z,a\,\delta(z)\bigr),
\qquad
\Omega_a(z):=\Omega_{t(a,z)}(z)=M\setminus \Theta_{t(a,z)}(z).
\]
Then $t(0,z)=0$ and $F(z,t(1,z))=\delta(z)$, so $\Omega_0(z)=\Omega(z)$ and $\operatorname{Vol}(\Omega_1(z))=\mathfrak h.$
Moreover,
\[
\operatorname{Vol}(\Omega_a(z))-\operatorname{Vol}(\Omega(z))=F\bigl(z,t(a,z)\bigr)=a\,\delta(z),
\]
which proves (i)--(ii).

If $\delta(z)=0$, then $t(a,z)=\theta(z,0)=0$ for all $a\in[0,1]$, hence
\[
\Omega_a(z)=\Omega_{t(a,z)}(z)=\Omega_0(z)=\Omega(z),
\]
which proves (vi).

\smallskip
\noindent\emph{Step 3: continuity.}
The map $(a,z)\mapsto a\,\delta(z)$ is continuous and $\theta$ is continuous, hence $(a,z)\mapsto t(a,z)$ is continuous.
Since $z\mapsto \Theta(z)$ is $L^1$-continuous and $(z,t)\mapsto \Phi_z^t$ depends continuously on parameters on compact sets,
the map $(a,z)\mapsto \Theta_{t(a,z)}(z)$, hence $(a,z)\mapsto \Omega_a(z)$, is $L^1$-continuous.
For Caccioppoli sets $A,B$,
\[
\mathcal F(\partial A,\partial B)\le \operatorname{Vol}(A\Delta B),
\]
so $(a,z)\mapsto T_a(z)$ is flat-continuous. This proves (iii).

\smallskip
\noindent\emph{Step 4: mass increase estimate.}
Set
\[
L_0:=\max_{1\le i\le N}\|\nabla X_i\|_{L^\infty(M)}<\infty.
\]
Since $X(z)=\sum_i \chi_i(z)X_i$ with $\chi_i\ge 0$ and $\sum_i\chi_i\equiv 1$, we have
\[
\|\nabla X(z)\|_{L^\infty(M)}\le L_0
\]
for all $z\in Z$. Hence the flow satisfies $\|D\Phi_z^t\|_{L^\infty}\le e^{L_0|t|}$ for $t\in[-\tau,\tau]$.
For any $(n\!-\!1)$-current $S$ and any smooth map $\Phi$, we have
\[
\mathbf M(\Phi_\# S)\le \Bigl(\sup_{x\in M} J_{n-1}\Phi(x)\Bigr)\,\mathbf M(S),
\qquad
J_{n-1}\Phi(x)\le \|D\Phi(x)\|^{n-1}.
\]
Therefore, for the flow,
\[
\mathbf M\bigl((\Phi_z^t)_\#S\bigr)\le e^{(n-1)L_0|t|}\,\mathbf M(S).
\]
Applying this with $S=T(z)$ and $t=t(a,z)$ gives
\[
\mathbf M(T_a(z))\le e^{(n-1)L_0|t(a,z)|}\,\mathbf M(T(z)).
\]
By \eqref{eq:global_strict_mono} with $t_1=0$,
\[
a\,|\delta(z)|=|F(z,t(a,z))|\ge \frac{m_\ast}{4}\,|t(a,z)|,
\]
so $|t(a,z)|\le \frac{4}{m_\ast}\,\rho$. Since $e^{(n-1)L_0|t|}-1\le \frac{e^{(n-1)L_0\tau}-1}{\tau}\,|t|$ for $|t|\le\tau$ and $\mathbf M(T(z))\le \Lambda_0$, we obtain
\[
\mathbf M(T_a(z))-\mathbf M(T(z))
\le \Lambda_0\,\frac{e^{(n-1)L_0\tau}-1}{\tau}\,|t(a,z)|
\le C_1\,\rho,
\]
where $C_1:=\Lambda_0\,\frac{e^{(n-1)L_0\tau}-1}{\tau}\,\frac{4}{m_\ast}$.
This proves (v) with $C\ge C_1$.

\smallskip
\noindent\emph{Step 5: flat distance estimate.}
Set
\[
V_0:=\max_{1\le i\le N}\|X_i\|_{L^\infty(M)}<\infty.
\]
Then $\|X(z)\|_{L^\infty(M)}\le V_0$ for all $z\in Z$.
Fix $(a,z)$ and set $t:=t(a,z)\in[-\tau,\tau]$. Consider the homotopy
\[
h:[0,1]\times M\to M,\qquad h(s,x):=\Phi_z^{s t}(x).
\]
Applying the homotopy flat-norm estimate (see Lemma~4 in \cite{Jerrard}, week 9) to $T(z)$ gives
\[
\mathcal F\bigl((\Phi_z^{t})_\#T(z)-T(z)\bigr)
\le \sup_{(s,x)\in[0,1]\times\operatorname{supp} T(z)}
\Bigl(|\partial_s h(s,x)|\,|\nabla_x h(s,x)|^{n-1}\Bigr)\,\mathbf M\bigl(T(z)\bigr).
\]
We have $|\partial_s h(s,x)|\le |t|\,V_0$ and $|\nabla_x h(s,x)|\le e^{L_0\tau}$ for $s\in[0,1]$, hence
\[
\mathcal F\bigl(T_a(z),T(z)\bigr)
=\mathcal F\bigl((\Phi_z^{t})_\#T(z),T(z)\bigr)
\le |t|\,V_0\,e^{(n-1)L_0\tau}\,\mathbf M\bigl(T(z)\bigr)
\le |t|\,V_0\,e^{(n-1)L_0\tau}\,\Lambda_0.
\]
Using $|t|\le \frac{4}{m_\ast}\rho$ yields
\[
\mathcal F\bigl(T_a(z),T(z)\bigr)\le C_2\,\rho,
\qquad
C_2:=V_0\,e^{(n-1)L_0\tau}\,\Lambda_0\,\frac{4}{m_\ast}.
\]
Taking $C:=\max\{C_1,C_2\}$ gives (iv)--(v).
\end{proof}

\subsection{Proof of Theorem \ref{thm:comp}}

\begin{proof}
Fix $\tilde\sigma\in(0,\sigma/2)$ and choose $\delta_{\mathcal F,\ast},\delta_\ast,C_A$ as in
Proposition \ref{prop:interpolation_package_B}. Let $\delta_0 = \min\{1, \delta_*\}$ and fix $\delta_1\in(0,\delta_0)$.

Choose a sequence $\varepsilon_\nu\downarrow 0$ such that
\[
\tilde c_{\varepsilon_\nu}(p)\to \tilde c(p)=\liminf_{\varepsilon\to 0^+}\tilde c_\varepsilon(p).
\]
Let $\nu$ be sufficiently large and let $X\in\mathcal C_p$ satisfy
\[
\tilde c_{\varepsilon_\nu}(X)\le \tilde c_{\varepsilon_\nu}(p)+\varepsilon_\nu.
\]
Choose $h\in\tilde\Gamma(X)$ such that $h$ is continuous and odd and
\[
L_{\varepsilon_\nu}:=\sup_{x\in X}E_{\varepsilon_\nu}(h(x))
\le \tilde c_{\varepsilon_\nu}(X)+\varepsilon_\nu
\le \tilde c_{\varepsilon_\nu}(p)+2\varepsilon_\nu.
\]

Apply Proposition \ref{prop:GG_flat_fine_discrete} to $h$ to obtain a subdivision level $j$
and an even discrete map
\[
\phi_0:X(j)_0\to \mathcal B(M,\mathbb{Z}_2),\qquad \phi_0(x)=\partial[\![\{h_x>s_x\}]\!],
\]
with the mass bound
\[
\mathbf M(\phi_0(x))\le \frac{E_{\varepsilon_\nu}(h_x)}{4\tilde\sigma}\le \frac{L_{\varepsilon_\nu}}{4\tilde\sigma}.
\]
Moreover, Proposition \ref{prop:GG_flat_fine_discrete} gives a quantitative estimate $\mathbf f_{\mathcal F}(\phi_0)\le C\,\varepsilon_\nu L_{\varepsilon_\nu}$. Hence, by taking $\nu$ sufficiently large (depending on $\delta_1$) we may assume that $\mathbf f_{\mathcal F}(\phi_0)\le \delta_{\mathcal F,\ast}$.

Now apply Proposition \ref{prop:interpolation_package_B} to $\phi_0$ with this $\delta_1$.
We obtain, after passing to a further subdivision $X(j+\ell)$,
an even mass-continuous map $\Phi:X\to \mathcal B(M,\mathbb{Z}_2)$
and an even discrete refinement $\phi:X(j+\ell)_0\to \mathcal B(M,\mathbb{Z}_2)$ such that
\[
\sup_{x\in X}\mathbf M(\Phi(x))
\le \sup_{y\in X(j)_0}\mathbf M(\phi_0(y))+\delta_1
\le \frac{L_{\varepsilon_\nu}}{4\tilde\sigma}+\delta_1
\le \frac{\tilde c_{\varepsilon_\nu}(p)+2\varepsilon_\nu}{4\tilde\sigma}+\delta_1.
\]

Let $\eta_{\varepsilon_\nu}$ be as in \eqref{eq:GG_vertex_defect_bound}.
By Proposition \ref{prop:defect_after_interpolation} we have, for all $z\in X$,
\[
d_{\mathfrak h}(\Phi(z))
\le \eta_{\varepsilon_\nu} + C_I\,\Psi(\delta_1)+ C_I\,\Psi(C_A\delta_1)
=: \rho(\varepsilon_\nu,\delta_1).
\]
By taking $\nu$ sufficiently large (depending on $\delta_1$) we may assume
\[
\rho(\varepsilon_\nu,\delta_1)\le \rho_0(\Lambda_0),
\]
where
\[
\Lambda_0:=\frac{\tilde c(p)+1}{4\tilde\sigma}+1<\infty
\]
and $\rho_0(\Lambda_0)$ is the constant from Proposition
\ref{prop:continuous_correction_lifted}.

Since $\tilde c_{\varepsilon_\nu}(p)\to \tilde c(p)$ and $\varepsilon_\nu\to 0$, for all sufficiently large $\nu$ we have
\[
\frac{\tilde c_{\varepsilon_\nu}(p)+2\varepsilon_\nu}{4\tilde\sigma}+\delta_1
\le \Lambda_0.
\]
Hence
\[
\sup_{x\in X}\mathbf M(\Phi(x))\le \Lambda_0.
\]

Let $\tilde X:=X/\{x\sim \tau(x)\}$ and let $\tilde\Phi:\tilde X\to \mathcal B(M,\mathbb{Z}_2)$ be the induced map.
Form the pullback double cover
\[
\widehat X=\{(\tilde x,\Omega)\in \tilde X\times \mathcal C(M):\ \partial\Omega=\tilde\Phi(\tilde x)\},
\qquad \pi:\widehat X\to \tilde X,
\]
and define the lifted family $\widehat\Omega:\widehat X\to \mathcal C(M)$ by projection.
Then $\widehat\Omega$ is $L^1$-continuous and $\partial\widehat\Omega=\tilde\Phi\circ\pi$. Moreover, for every $\hat x\in\widehat X$ we have
\[
\bigl|\operatorname{Vol}(\widehat\Omega(\hat x))-\mathfrak h\bigr|=d_{\mathfrak h}(\tilde\Phi(\pi(\hat x)))
\le \rho(\varepsilon_\nu,\delta_1),
\]
and
\[
\mathbf M\bigl(\partial\widehat\Omega(\hat x)\bigr)
=\mathbf M\bigl(\tilde\Phi(\pi(\hat x))\bigr)\le \Lambda_0.
\]
Set
\[
D(\hat x):=\mathfrak h-\operatorname{Vol}(\widehat\Omega(\hat x)),
\qquad
\widehat X_-:=\{\hat x\in \widehat X:\ D(\hat x)\ge 0\}.
\]
Since $D$ is continuous and $D(\iota\hat x)=-D(\hat x)$, the set $\widehat X_-$ is compact, and every fiber $\pi^{-1}(\tilde x)$ meets $\widehat X_-$.

Apply Proposition \ref{prop:continuous_correction_lifted} to the restricted family $\widehat\Omega|_{\widehat X_-}:\widehat X_-\to \mathcal C(M)$. We obtain a family
\[
[0,1]\times \widehat X_- \ni (a,\hat x)\longmapsto \widehat\Omega_a^-(\hat x)\in \mathcal C(M)
\]
such that $\operatorname{Vol}(\widehat\Omega_1^-(\hat x))=\mathfrak h$
for all $\hat x\in \widehat X_-$, and $\widehat T_a^-(\hat x):=\partial\widehat\Omega_a^-(\hat x)$ is flat-continuous in $(a,\hat x)$.

We extend this family to all of $\widehat X$ by
\[
\widehat\Omega_a(\hat x):=
\begin{cases}
\widehat\Omega_a^-(\hat x), & \hat x\in \widehat X_-,\\[1ex]
M\setminus \widehat\Omega_a^-(\iota\hat x), & \hat x\notin \widehat X_-.
\end{cases}
\]
If $D(\hat x)=0$, then $\widehat\Omega_a^-(\hat x)=\widehat\Omega(\hat x)$ for all $a\in[0,1]$.
Hence the above definition is $L^1$-continuous across the interface $\{D=0\}$.

Set $\widehat T_a(\hat x):=\partial\widehat\Omega_a(\hat x)$. Then $(a,\hat x)\mapsto \widehat T_a(\hat x)$ is flat-continuous, $
\widehat T_a(\iota\hat x)=\widehat T_a(\hat x)$ for all $(a,\hat x)\in [0,1]\times \widehat X$, $\widehat T_0=\partial\widehat\Omega$ and $\widehat T_1(\hat x)\in \mathcal H(M,\mathbb{Z}_2)$ for all $\hat x\in \widehat X$.
Therefore $\widehat T_a$ descends to a well-defined flat-continuous homotopy
\[
\tilde\Phi_a:\tilde X\to \mathcal B(M,\mathbb{Z}_2),
\qquad
\tilde\Phi_a(\tilde x)=\widehat T_a(\hat x)
\]
for all $\hat x\in\pi^{-1}(\tilde x)$. In particular, $\tilde\Phi_0=\tilde\Phi$ and $\tilde\Phi_1(\tilde X)\subset \mathcal H(M,\mathbb{Z}_2)$.

Define the even map
\[
\Phi_{\mathfrak h}:X\to \mathcal H(M,\mathbb{Z}_2),\qquad \Phi_{\mathfrak h}(x):=\tilde\Phi_1([x]).
\]

We claim that $\Phi_{\mathfrak h}$ satisfies the required mass bound. By part (v) of that proposition, for every $\hat x\in \widehat X_-$,
\[
\mathbf M\bigl(\widehat T_1^-(\hat x)\bigr)
\le \mathbf M\bigl(\partial\widehat\Omega(\hat x)\bigr)+C\,\rho(\varepsilon_\nu,\delta_1).
\]
By the definition of the extension to all of $\widehat X$, the same estimate holds for every
$\hat x\in\widehat X$, hence
\[
\sup_{x\in X}\mathbf M\bigl(\Phi_{\mathfrak h}(x)\bigr)
\le
\sup_{x\in X}\mathbf M\bigl(\Phi(x)\bigr)+C\,\rho(\varepsilon_\nu,\delta_1).
\]

By taking $\nu$ sufficiently large (depending on $\delta_1$), we have
\[
\sup_{x\in X}\mathbf M\bigl(\Phi_{\mathfrak h}(x)\bigr)
\le
\sup_{x\in X}\mathbf M\bigl(\Phi(x)\bigr)+C\,\delta_1.
\]
Using the bound already proved for $\Phi$, we obtain
\[
\sup_{x\in X}\mathbf M\bigl(\Phi_{\mathfrak h}(x)\bigr)
\le
\frac{\tilde c_{\varepsilon_\nu}(p)+2\varepsilon_\nu}{4\tilde\sigma}+ C\,\delta_1.
\]

By construction, $(a,\tilde x)\mapsto \tilde\Phi_a(\tilde x)$ is a flat homotopy in $\mathcal B(M,\mathbb{Z}_2)$
between $\tilde\Phi$ and $\tilde\Phi_1$.

Let $\bar\lambda\in H^1(\mathcal B(M,\mathbb{Z}_2);\mathbb{Z}_2)$ denote the degree-one generator. Since
$\mathcal H(M,\mathbb{Z}_2)\hookrightarrow \mathcal B(M,\mathbb{Z}_2)$ is a homotopy equivalence, the restriction of
$\bar\lambda$ to $\mathcal H(M,\mathbb{Z}_2)$ is the generator $\lambda\in H^1(\mathcal H(M,\mathbb{Z}_2);\mathbb{Z}_2)$.

Since $\tilde\Phi$ is a $p$-sweepout (see Section 6.9 in \cite{GG2}), we have $\tilde\Phi^{\,*}\bar\lambda^p\neq 0$ in $H^p(\tilde X;\mathbb{Z}_2)$. By homotopy invariance, $\tilde\Phi_1^{\,*}\bar\lambda^p=\tilde\Phi^{\,*}\bar\lambda^p\neq 0.$
Because $\tilde\Phi_1(\tilde X)\subset \mathcal H(M,\mathbb{Z}_2)$ and $\bar\lambda|_{\mathcal H(M,\mathbb{Z}_2)}=\lambda$, it follows that $\tilde\Phi_1^{\,*}\lambda^p\neq 0$ in $H^p(\tilde X;\mathbb{Z}_2)$. Therefore $\Phi_\mathfrak h$ is a half-volume $p$-sweepout, and the theorem follows.
\end{proof}

\section{From cycles to $H^1$-functions}
\label{sec:cycles-to-phase}

In this section, we prove the following inequality.

\begin{theorem} \label{thm:lpwp}
For every $p \in \mathbb{N}$, it holds
	\[\frac{1}{2\sigma}\limsup_{\varepsilon \to 0^+} \tilde c_\varepsilon(p) \leq \tilde\omega_p(M).\]
\end{theorem}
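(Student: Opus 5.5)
Given $\eta>0$, I will pick a half-volume $p$-sweepout $\Phi\colon X\to\mathcal H(M,\mathbb{Z}_2)$ with no concentration of mass. Here $X$ is a finite cubical complex of dimension at most $p$, and $\sup\mathbf M(\Phi)\le\tilde\omega_p+\eta$. From $\Phi$ I will build, for every small $\varepsilon$, a compact invariant set $A_\varepsilon\in\mathcal G_p$ with $\sup_{A_\varepsilon}E_\varepsilon\le 2\sigma(\tilde\omega_p+2\eta)+o(1)$. Following Dey \cite{Dey}, I will pass to the double cover $\widehat X$ of Subsection~\ref{subsec:lifting}, which carries the lifted Caccioppoli sets $\widehat\Omega(\hat x)$ of volume exactly $\mathfrak h$. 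The goal is an odd continuous map $\widehat X\to Y\setminus\{0\}$, with oddness taken with respect to $\iota$, whose image has index $\ge p+1$. The index bound holds because $\Phi^*\lambda^p\neq0$ makes the classifying map of $\widehat X\to\tilde X$ nontrivial in degree $p$; this is the same identification Gaspar--Guaraco use in the reverse direction.

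\textbf{Step 1 (regularization).} First I approximate $\Phi$ on a fine subdivision $\widehat X(k)_0$ by Caccioppoli sets $\Omega_x$ whose boundaries are smooth away from a fixed codimension-two set. I will use Dey's discretization and the Marques--Neves/Zhou interpolation results to require three things: the area is at most $\mathbf M(\Phi(x))+\eta$; the sets are close in $L^1$ on adjacent vertices; and adjacent boundaries differ only inside small balls (the locality property). I will also take $\Omega_{\iota x}=M\setminus\Omega_x$. The volumes are then only within $\eta'$ of $\mathfrak h$. On the vertices this is cheap to fix with a small volume correction in the spirit of Proposition~\ref{prop:continuous_correction_lifted}, but I will not rely on it. Mean zero will be imposed at the function level instead, which is more convenient.

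\textbf{Step 2 (functions and gluing).} At each vertex I set $u_x=\overline{\mathbb H}_\varepsilon(d_{\Omega_x})$. Here $d_{\Omega_x}$ is the signed distance, and $\overline{\mathbb H}_\varepsilon$ is the truncated one-dimensional heteroclinic profile, which is odd, equal to $\pm1$ beyond distance $\sim\varepsilon|\log\varepsilon|$, and satisfies $E_\varepsilon(u_x)\le 2\sigma\,\mathrm{Area}(\partial\Omega_x)+o(1)$. This gives $u_{\iota x}=-u_x$. I then extend cell by cell with Dey's gluing construction, using min/max-type interpolations over the cells of $\widehat X(k)$ that respect the involution. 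This yields an odd continuous $h\colon\widehat X\to W^{1,2}$ with $E_\varepsilon(h)\le 2\sigma(\tilde\omega_p+2\eta)+o_\varepsilon(1)$, and no concentration of mass controls the energy cost of gluing. The locality property further gives $|\int_M h(\hat x)|\le C(\eta'+\varepsilon|\log\varepsilon|)$. The reason is that $h(\hat x)$ agrees with some $u_x$ outside small balls, and $\int u_x=\operatorname{Vol}\Omega_x-\operatorname{Vol}(M\setminus\Omega_x)+O(\varepsilon\,\mathrm{Area})$.

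\textbf{Step 3 (mean-zero correction).} This step is the main obstacle. I must produce an odd map into $Y\setminus\{0\}$ while changing the energy only by $o(1)$. A naive subtraction of the mean is not acceptable, since $W(u-c)$ costs $\sim c\,\operatorname{Vol}/\varepsilon$. I will mimic Proposition~\ref{prop:continuous_correction_lifted} at the level of level sets instead. Using a compactness and partition-of-unity argument over $\widehat X$, I construct finitely many divergence-type vector fields $X_i$ that push the transition region out of a region where $u\approx-1$ and into a region where $u\approx+1$. Composing with the resulting flow, $h_t(\hat x)=h(\hat x)\circ\Phi^{-t}_{\hat x}$, changes $\int h$ strictly monotonically in $t$ at a rate bounded below. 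By the intermediate value theorem and uniqueness there is a continuous $t(\hat x)$ with $\int h_{t(\hat x)}=0$, and $|t|\lesssim\eta'+\varepsilon|\log\varepsilon|$. Oddness is preserved if I define the correction on $\{\int h\ge0\}$ and extend by $\hat x\mapsto-h(\iota\hat x)$, exactly as in the proof of Theorem~\ref{thm:comp}. The energy changes by a factor $e^{C|t|}$. The resulting functions are nonzero because each one is $\approx\pm1$ on sets of volume $\approx\mathfrak h$.

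Finally I let $\varepsilon\to0$ and then $\eta,\eta'\to0$, which gives $\limsup\tilde c_\varepsilon(p)\le2\sigma\tilde\omega_p$.
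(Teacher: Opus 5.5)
Your outline is sound, but at the decisive step you take a genuinely different route from the paper.

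\textbf{How the paper corrects the mean.} The paper does not flow at all: it simply subtracts the mean (Proposition~\ref{prop:mean_zero_shift}). Your reason for rejecting this is inaccurate. Since $W'(\pm1)=0$, shifting by $c$ costs $O(c^2/\varepsilon)$ on $\{u=\pm1\}$ and $O(c\operatorname{Vol}(K)/\varepsilon)$ on the transition set $K$. With the paper's profile this is $O(m_\varepsilon/\sqrt\varepsilon+m_\varepsilon^2/\varepsilon)$, not $c\operatorname{Vol}(M)/\varepsilon$. The paper then forces $m_\varepsilon=O(\varepsilon)$ in three steps:
\begin{itemize}
\item the vertex volume defects are $O(\eta\delta/J)$ (Corollary~\ref{cor:dey_small_defect});
\item the glued functions agree with the vertex functions off $\mathcal A_1\cup\bigcup_{i\in J_\alpha}B_i^1$;
\item the discretization data are chosen depending on $\varepsilon$, so that all these volumes are $\le\varepsilon$.
\end{itemize}
The price is that the data become $\varepsilon$-dependent. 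The thresholds ``$\varepsilon$ sufficiently small depending on the data'' (the gluing energy bound, the level-set area bounds for $d_v$) must then be compatible with that choice.

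\textbf{What your route buys.} Your flow correction has a multiplicative cost $e^{C|t|}$, so it tolerates a fixed, $\varepsilon$-independent mean defect. You can freeze the discretization, take $\limsup_{\varepsilon\to0}$, and only then refine, which is a cleaner order of limits. Nonvanishing is also automatic, since composing with a diffeomorphism preserves $u\neq0$. With a fixed defect, naive subtraction really does fail ($c^2/\varepsilon\to\infty$), so in your setup the flow is the right tool.

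\textbf{Uniformity must be made explicit.} The bound $|t|\lesssim\eta'+\varepsilon|\log\varepsilon|$ and the constant in $e^{C|t|}$ need two things independent of $\varepsilon$ and of the discretization: a lower bound on the rate, and a bound on $\|\nabla X_i\|$. A compactness argument over $\widehat X$ at fixed $\varepsilon$ does not give this. Instead:
\begin{itemize}
\item Build the $X_i$ as in Proposition~\ref{prop:continuous_correction_lifted}, from the $\varepsilon$-independent $L^1$-compact set $\mathcal K$ there, with a slightly larger perimeter bound.
\item Use the Modica--Mortola/coarea argument to show that functions with $|u|\le1$, bounded $E_\varepsilon$ and small mean are, for small $\varepsilon$, uniformly $L^1$-close to some $\chi_{M\setminus\Omega}-\chi_\Omega$ with $\Omega\in\mathcal K$. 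They therefore lie in one of the neighborhoods $U_i$.
\item Transfer the rate estimate via
\[
\frac{d}{dt}\int_M u\circ\Phi^{-t}=\int_M (u\circ\Phi^{-t})\operatorname{div}X=-\int_M \bigl(1-u\circ\Phi^{-t}\bigr)\operatorname{div}X,
\]
using $\int_M\operatorname{div}X=0$, with $\tfrac12(1-u)\in[0,1]$ playing the role of $\chi_\Theta$.
\end{itemize}

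\textbf{A smaller omission.} Your mean bound should also include the volume of the region where the glued function departs from the vertex functions (the paper's $\mathcal A_1\cup\bigcup_{i\in J_\alpha}B_i^1$). For your scheme, any fixed small quantity there is harmless.
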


Fix $p\in\mathbb{N}$ and $\eta>0$. By definition of the half-volume width $\tilde\omega_p(M)$, there exists a half-volume $p$-sweepout $\Phi:X \to \mathcal H(M;\mathbb{Z}_2)$ with no concentration of mass and satisfying
\[
\sup_{x\in X}\mathbf M(\Phi(x)) \le \tilde\omega_p(M)+\eta.
\]

Recall that
\[
\widehat X
=\{(\tilde x,\Omega)\in \tilde X\times \mathcal C(M):\ \partial\Omega=\tilde \Phi(\tilde x)\}
\]
is the pullback double cover from Subsection \ref{subsec:lifting}, and
\[
\widehat\Omega:\widehat X\to\mathcal C(M),
\qquad
\widehat\Omega(\tilde x,\Omega)=\Omega,
\]
is the associated lifted $L^1$-continuous family of Caccioppoli sets, satisfying
\[
\partial\widehat\Omega=\tilde \Phi\circ\pi,
\qquad
\widehat\Omega(\iota\hat x)=M\setminus \widehat\Omega(\hat x).
\]

\subsection{Almost smooth discretization}\label{subsec:almost-smooth}

In this subsection we record the output of Dey's almost smooth discretization
construction from Section~3.3 of \cite{Dey}. The result produces a discrete family of Caccioppoli sets on a sufficiently fine vertex set whose boundaries are smooth away from a fixed codimension-two set.

Fix $\eta>0$. As in \cite[Section 3.1]{Dey}, choose points $p_1,\dots,p_J\in M$ and radii $r_0>r_1>0, \delta\in(0,r_1)$,
so that the balls $B_i^0:=B(p_i,r_0), B_i^1:=B(p_i,r_1), i\in[J],$
cover $M$, and define the annuli
\[
\begin{aligned}
\mathcal A_0
&:=\bigcup_{i=1}^J A\bigl(p_i,\; r_1-2\delta,\; r_1+\delta\bigr),\\
\mathcal A_1
&:=\bigcup_{i=1}^J A\bigl(p_i,\; r_1-\tfrac32\delta,\; r_1+\tfrac12\delta\bigr),\\
\mathcal A
&:=\bigcup_{i=1}^J A\bigl(p_i,\; r_1-\delta,\; r_1\bigr).
\end{aligned}
\]

Let $\tilde X:=X/\{x\sim \tau(x)\}$ and recall that $\pi:\widehat X\to \tilde X$ is the pullback double cover with deck involution $\iota$. Since $\tilde\Phi:\tilde X\to \mathcal H(M,\mathbb{Z}_2)$ is flat-continuous and $\widehat\Omega:\widehat X\to \mathcal{C}_{\mathfrak h}(M)$ is $L^1$-continuous, by compactness we can choose
$N\in\mathbb{N}$ such that the following hold:
\begin{enumerate}
\item[(i)] if $\tilde x,\tilde x'\in \tilde X(N)_0$ lie in a common cell of $\tilde X(N)$, then
\[
\mathbf M\bigl(\tilde\Phi(\tilde x)-\tilde\Phi(\tilde x')\bigr)<\eta;
\]
\item[(ii)] if $\hat x,\hat x'\in \widehat X(N)_0$ lie in a common cell of $\widehat X(N)$, then
\[
\operatorname{Vol}\bigl(\widehat\Omega(\hat x)\Delta \widehat\Omega(\hat x')\bigr)<\frac{\eta\delta}{J}.
\]
\end{enumerate}

Let $\{c_q:\ q\in[Q]\}$ be the cells of $\tilde X(N)$, indexed so that $\dim(c_{q_1})\le \dim(c_{q_2})$ if $q_1\le q_2$.
For each $q$, write
\[
\pi^{-1}(c_q)=e_q\sqcup f_q,
\qquad
m:=\dim(c_q)=\dim(e_q)=\dim(f_q),
\]
and let $e_q(J)$ and $f_q(J)$ denote the $J$-fold subdivisions of these cells. Thus $e_q(J)_0=e_q\cap \widehat X(NJ)_0$ and $f_q(J)_0=f_q\cap \widehat X(NJ)_0$.

For each $q\in[Q]$ and $i\in[J]$, choose radii $r_i(q)\in(r_1-\delta,r_1)$ as in \cite[(3.14)]{Dey}, and define $B_i(q):=B(p_i,r_i(q))$. Choose $r_\ast$ so that
\[
r_1>r_\ast>\max\{r_i(q):\ i\in[J],\ q\in[Q]\},
\]
and set $B_i:=B(p_i,r_\ast)$ for $i\in[J]$.

We also define the finite family of measurable sets
\[
\mathcal R_1:=\{B_i,\ M\setminus B_i:\ i\in[J]\},
\qquad
\mathcal R_2:=\Bigl\{\bigcap_{j=1}^s U_j:\ s\in\mathbb{N},\ U_j\in\mathcal R_1\Bigr\},
\]
\[
\mathcal R:=\Bigl\{\bigcup_{j=1}^t V_j:\ t\in\mathbb{N},\ V_j\in\mathcal R_2\Bigr\}.
\]

By \cite[Propositions~3.4 and~3.5]{Dey}, after choosing $\gamma=\gamma(\eta)\in\mathbb{N}$ sufficiently large,
there exists a map
\[
\widetilde\Phi_\gamma:\widehat X(N)_0\to \mathcal C(M)
\]
such that, writing
\[
\Sigma_x^\gamma:=\partial \widetilde\Phi_\gamma(x),
\]
the following hold:
\begin{enumerate}
\item[(i)] $\widetilde\Phi_\gamma(\iota x)=M\setminus \widetilde\Phi_\gamma(x)$ for all $x\in\widehat X(N)_0$;
\item[(ii)] each $\Sigma_x^\gamma$ is a smooth, closed, embedded hypersurface;
\item[(iii)]
\[
\operatorname{Vol}\bigl(\widetilde\Phi_\gamma(x)\Delta \widehat\Omega(x)\bigr)<\frac{\eta\delta}{J}
\qquad\forall\,x\in\widehat X(N)_0;
\]
\item[(iv)] for every $x\in\widehat X(N)_0$,
\[
\mathbf M(\Sigma_x^\gamma)\le \mathbf M(\tilde\Phi(\pi(x)))+2\eta;
\]
\item[(v)] for every $x\in\widehat X(N)_0$ and every $i\in[J]$,
\[
\mathbf M(\Sigma_x^\gamma\cap B_i^0)<2\eta,
\qquad
\mathbf M(\Sigma_x^\gamma\cap \mathcal A_0)<2\eta;
\]
\item[(vi)] if $x,x'\in\widehat X(N)_0$ lie in a common cell of $\widehat X(N)$, then for every
$R\in\mathcal R$,
\[
\bigl|\mathbf M(\Sigma_x^\gamma\cap R)-\mathbf M(\Sigma_{x'}^\gamma\cap R)\bigr|<\eta.
\]
\end{enumerate}

\begin{proposition}[Dey, {\cite[Proposition~3.6]{Dey}}]
\label{prop:dey_discretization_package}
There exist finite sets $F_q\subset[Q]$ for $q\in[Q]$, a closed set $S\subset M$, and a map
\[
\widetilde\Psi:\widehat X(NJ)_0\to \mathcal C(M)
\]
such that the following holds. For $v\in\widehat X(NJ)_0$ set $\widetilde\Omega_v:=\widetilde\Psi(v), \widetilde\Omega_{\iota v}:=M\setminus \widetilde\Omega_v$, and $\Sigma_v:=\partial\widetilde\Omega_v$. For each $q\in[Q]$ with $m:=\dim(c_q)$, the restrictions of $\widetilde\Psi$ to
$e_q(J)_0$ and $f_q(J)_0$ satisfy the following properties.

\begin{enumerate}
\item[\rm(P0)$_{m,q}$]
For all $x\in e_q(1)_0\cup f_q(1)_0$,
\[
\widetilde\Psi(x)=\widetilde\Phi_\gamma(x).
\]

\item[\rm(P1)$_{m,q}$]
For all $v\in e_q(J)_0\cup f_q(J)_0$,
\[
M=\widetilde\Omega_v\cup \widetilde\Omega_{\iota v},
\qquad
[\![\widetilde\Omega_v]\!]+[\![\widetilde\Omega_{\iota v}]\!]=[\![M]\!]
\ \text{in }\mathbf I_{n+1}(M;\mathbb{Z}_2),
\]
and $\Sigma_v=\Sigma_{\iota v}$ as sets.

\item[\rm(P2)$_{m,q}$]
For all $v\in e_q(J)_0$,
\[
\widetilde\Omega_v \subset \bigcup_{x\in e_q(1)_0}\widetilde\Phi_\gamma(x),
\]
and for all $v\in f_q(J)_0$,
\[
\widetilde\Omega_v \subset \bigcup_{x\in f_q(1)_0}\widetilde\Phi_\gamma(x).
\]

\item[\rm(P3)$_{m,q}$]
For all $v\in e_q(J)_0$,
\[
\Sigma_v\subset
\left(\bigcup_{x\in e_q(1)_0}\Sigma_x^\gamma\right)
\cup
\left(\bigcup_{i\in[J],\,s\in F_q}\partial B_i(s)\right),
\]
and for all $v\in f_q(J)_0$,
\[
\Sigma_v\subset
\left(\bigcup_{x\in f_q(1)_0}\Sigma_x^\gamma\right)
\cup
\left(\bigcup_{i\in[J],\,s\in F_q}\partial B_i(s)\right).
\]

\item[\rm(P4)$_{m,q}$]
For all $v\in e_q(J)_0\cup f_q(J)_0$, the hypersurface $\Sigma_v\setminus S$ is smooth and embedded.
Moreover, $\Sigma_v\setminus S$ is a disjoint union of open subsets of hypersurfaces belonging to
\[
\{\Sigma_x^\gamma:\ x\in e_q(1)_0\cup f_q(1)_0\}\cup
\{\partial B_i(s):\ i\in[J],\ s\in F_q\}.
\]
In particular, for every $p\in \Sigma_v\setminus S$ there exists a normal geodesic ball $U$
centered at $p$ such that $U\cap \Sigma_v$ is a smooth hypersurface and
\[
U\setminus \Sigma_v=G_1\sqcup G_2,
\]
with $G_1\subset \widetilde\Omega_v$ and $G_2\subset \widetilde\Omega_{\iota v}$.

\item[\rm(P5)$_{m,q}$]
If $v,v'\in e_q(J)_0\cup f_q(J)_0$ are the endpoints of an edge
$e\in e_q(J)_1\cup f_q(J)_1$, then there exists $\ell=\ell(e)\in[J]$ such that
\[
\widetilde\Omega_v\cap\bigl(M\setminus(\mathcal A\cup B_\ell)\bigr)
=
\widetilde\Omega_{v'}\cap\bigl(M\setminus(\mathcal A\cup B_\ell)\bigr),
\]
\[
\widetilde\Omega_{\iota v}\cap\bigl(M\setminus(\mathcal A\cup B_\ell)\bigr)
=
\widetilde\Omega_{\iota v'}\cap\bigl(M\setminus(\mathcal A\cup B_\ell)\bigr),
\]
and hence also
\[
\Sigma_v\cap\bigl(M\setminus(\mathcal A\cup B_\ell)\bigr)
=
\Sigma_{v'}\cap\bigl(M\setminus(\mathcal A\cup B_\ell)\bigr).
\]

\item[\rm(P6)$_{m,q}$]
For all $v\in e_q(J)_0$ and all $l\in[J]$,
\[
\mathbf M\bigl(\Sigma_v\cap B_l^0\bigr)<2^{4m+2}\eta,
\]
and the same estimate holds for $v\in f_q(J)_0$.

\item[\rm(P7)$_{m,q}$]
For all $v\in e_q(J)_0$,
\[
\mathbf M\bigl(\Sigma_v\cap \mathcal A_0\bigr)<2^{4m+2}\eta,
\]
and the same estimate holds for $v\in f_q(J)_0$.

\item[\rm(P8)$_{m,q}$]
For all $v\in e_q(J)_0$, all $x\in e_q(1)_0$, and all $R\in\mathcal R$,
\[
\bigl|\mathbf M(\Sigma_v\cap R)-\mathbf M(\Sigma_x^\gamma\cap R)\bigr|<2^{4m+2}\eta,
\]
and for all $v\in f_q(J)_0$, all $x\in f_q(1)_0$, and all $R\in\mathcal R$,
\[
\bigl|\mathbf M(\Sigma_v\cap R)-\mathbf M(\Sigma_x^\gamma\cap R)\bigr|<2^{4m+2}\eta.
\]
In particular, if
\[
L:=\sup_{x\in X}\mathbf M(\Phi(x)),
\]
then for all $v\in e_q(J)_0\cup f_q(J)_0$,
\[
\mathbf M(\Sigma_v)\le L+\bigl(2^{4m+2}+2\bigr)\eta.
\]
\end{enumerate}
\end{proposition}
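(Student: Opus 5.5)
This proposition is essentially Dey's Proposition~3.6 transplanted to the lifted setting, so the plan is to rerun Dey's inductive construction on the cells of $\tilde X(N)$ and do the bookkeeping on the double cover $\widehat X$. We will process the cells $c_q$ in order of increasing dimension (this is why the indexing satisfies $\dim c_{q_1}\le\dim c_{q_2}$ for $q_1\le q_2$). At stage $q$ we define $\widetilde\Psi$ only on $e_q(J)_0$. We then \emph{define} it on $f_q(J)_0$ by $\widetilde\Psi(\iota v):=M\setminus\widetilde\Psi(v)$. Since $\iota$ maps $e_q$ homeomorphically onto $f_q$ and the constructions are symmetric under complementation, (P1) holds by fiat. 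Properties (P2)--(P8) for $f_q$ then follow from those for $e_q$ by complementing: $\partial(M\setminus\Omega)=\partial\Omega$, and unions of the $\widetilde\Phi_\gamma(x)$ become intersections of the complements. We will check that (P2) is stated in the form that survives this, by replacing $\widetilde\Phi_\gamma(x)$ by $\widetilde\Phi_\gamma(\iota x)$ where needed. On the vertices $e_q(1)_0$ we set $\widetilde\Psi=\widetilde\Phi_\gamma$, which gives (P0). Item (i) of the list for $\widetilde\Phi_\gamma$ guarantees consistency with the $\iota$-definition.

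On the interior of a cell we use Dey's ball-by-ball interpolation. For an edge between adjacent vertices $x,x'$ of $e_q(1)$, the $J$ intermediate vertices are
\[
\widetilde\Omega_{v_k}=\Bigl(\widetilde\Phi_\gamma(x)\setminus \textstyle\bigcup_{i\le k}B_i(q)\Bigr)\cup\Bigl(\widetilde\Phi_\gamma(x')\cap\textstyle\bigcup_{i\le k}B_i(q)\Bigr).
\]
We extend this inductively over higher-dimensional cells, using coordinatewise the same cut-and-paste with radii $r_i(s)$ for $s\in F_q$, where $F_q$ records the faces of $c_q$ already treated. Compatibility on common faces holds because lower-dimensional cells were fixed first, and the construction on a face of $e_q$ coincides with the one already chosen there, since $\pi$ restricted to $e_q$ is a cellular homeomorphism. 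With this construction (P2)--(P3) are immediate. For (P4) we take $S$ to be the union of the finitely many intersections $\Sigma_x^\gamma\cap\partial B_i(s)$ and $\partial B_i(s)\cap\partial B_j(s')$. These sets are codimension two once the radii $r_i(q)$ are chosen, as in \cite[(3.14)]{Dey}, so that all the spheres are transverse to all the $\Sigma_x^\gamma$ and to each other. (P5) holds because consecutive vertices differ only inside one ball $B_\ell$, up to the annular region $\mathcal A$ where the radii vary.

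The mass estimates (P6)--(P8) are the main obstacle. We will prove them by induction on $m=\dim c_q$, with the constant $2^{4m+2}$ absorbing a factor of at most $16$ per dimension step. A cut-and-paste vertex has boundary contained in $(\Sigma_x^\gamma\setminus B)\cup(\Sigma_{x'}^\gamma\cap B)\cup(\partial B\cap(\widetilde\Phi_\gamma(x)\Delta\widetilde\Phi_\gamma(x')))$. The last piece is controlled by the choice of $r_i(q)$: by coarea applied to the small volume $\eta\delta/J$ from items (ii)--(iii), there is a radius in $(r_1-\delta,r_1)$ with slice area at most $\eta/J$ per ball. The first two pieces are compared to $\Sigma_x^\gamma\cap R$ using item (vi) for the sets $R\in\mathcal R$, which are exactly the Boolean combinations of the balls $B_i$ that appear. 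The localized bounds (v) on $B_i^0$ and $\mathcal A_0$ keep the errors from accumulating across the $J$ balls. Summing these contributions gives the doubling per dimension. Finally, the bound $\mathbf M(\Sigma_v)\le L+(2^{4m+2}+2)\eta$ follows from (P8) with $R=M$ and item (iv).
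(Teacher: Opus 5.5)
Your overall plan (Dey's cut-and-paste interpolation run cell by cell in increasing dimension, the $f_q$-sheet defined by complementation, radii $r_i(q)$ chosen by coarea slicing plus transversality) is the right one; the paper itself simply imports Dey's Proposition~3.6. The gap is at the one genuinely delicate point, which is compatibility on shared faces.

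\textbf{Compatibility on shared faces.} The radii $r_i(q)$ depend on the cell. The slice bound $\mathcal H^n(\partial B_i(q)\cap E_q)\lesssim \eta/J$, with $E_q$ the union of the symmetric differences of the corner sets of $e_q$, is obtained by Chebyshev cell by cell. A single radius good for all $Q$ cells would cost a factor of $Q$, and $Q$ grows with $N$, which is fixed after $\eta,\delta,J$.

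Consequently, a cut-and-paste along $\bigcup_{i\le k}B_i(q)$ on $e_q$ does \emph{not} restrict on a face $e_s\prec e_q$ to the values already assigned there using the spheres $\partial B_i(s)$. No ``coordinatewise'' choice of radii repairs this, because parallel faces are different cells with different radii. So your claim that the construction on a face of $e_q$ coincides with the one already chosen is false. The fact that $\pi|_{e_q}$ is a cellular homeomorphism is beside the point.

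What works is to keep the old values on the vertices of $\partial e_q(J)$ and use $\partial B_i(q)$ only to define interior vertices, for example by cutting the values on two opposite faces along $\bigcup_{i\le k}B_i(q)$. Across the face/interior interface, consecutive sets then also differ on $B_i(s)\Delta B_i(q)\subset\mathcal A$. You must check (P5) there, including that the two parallel face edges involved switch the same ball $B_\ell$. This is exactly where the $\mathcal A$ in (P5) and the spheres $\partial B_i(s)$, $s\in F_q$, in (P3)--(P4) come from, and your sketch does not carry it out.

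\textbf{The mass induction.} The same mismatch drives the mass induction, and your account of it is off. Item (vi) is available only for the corner surfaces $\Sigma_x^\gamma$ and for $R\in\mathcal R$. The family $\mathcal R$ is generated by the fixed balls $B_i=B(p_i,r_\ast)$, not by the balls $B_i(q)$, $B_i(s)$ actually used in the cutting. Passing between them costs the mass of the already-constructed face hypersurfaces in $B_i\setminus B_i(q)\subset\mathcal A\subset\mathcal A_0$. That mass is controlled by (P7) in dimension $m-1$, not merely by item (v). This coupling, (P7)$_{m-1}$ and (P8)$_{m-1}$ $\Rightarrow$ (P8)$_m$, is what makes the constant grow geometrically. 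The non-accumulation over the $J$ balls comes instead from $\mathcal R$ being closed under finite unions, so (vi) is applied once to $\bigcup_{i\le k}B_i$ rather than ball by ball.

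\textbf{(P2) on the $f_q$-sheet.} (P2) on $f_q$ does not follow from (P2) on $e_q$ by complementation. Complementing $\widetilde\Omega_v\subset\bigcup_{x\in e_q(1)_0}\widetilde\Phi_\gamma(x)$ only gives $\widetilde\Omega_{\iota v}\supset\bigcap_{y\in f_q(1)_0}\widetilde\Phi_\gamma(y)$. You need to carry the two-sided patchwork property $\bigcap_{x}\widetilde\Phi_\gamma(x)\subset\widetilde\Omega_v\subset\bigcup_{x}\widetilde\Phi_\gamma(x)$ through the induction; it is preserved both by cut-and-paste and by complements. This matters here, because Corollary~\ref{cor:dey_small_defect} uses (P2) on both sheets.
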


\begin{lemma}\label{lem:union_volume_control}
Let $E_1,\dots,E_K\in \mathcal C(M)$ and set $E:=\bigcup_{j=1}^K E_j$.
Then for every $j_0\in\{1,\dots,K\}$,
\[
\bigl|\operatorname{Vol}(E)-\operatorname{Vol}(E_{j_0})\bigr|
\le \sum_{j=1}^K \operatorname{Vol}(E_j\Delta E_{j_0}).
\]
\end{lemma}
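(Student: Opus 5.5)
The plan is to reduce everything to a pointwise comparison of indicator functions, using the elementary inclusion $E_{j_0}\subset E\subset E_{j_0}\cup\bigcup_{j\neq j_0}(E_j\setminus E_{j_0})$. Since $E_{j_0}\subset E$, we have $\operatorname{Vol}(E)\ge\operatorname{Vol}(E_{j_0})$, so the absolute value is simply $\operatorname{Vol}(E)-\operatorname{Vol}(E_{j_0})=\operatorname{Vol}(E\setminus E_{j_0})$. This holds up to null sets, which is harmless since all sets are Caccioppoli and only their measure classes matter.

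Next, observe that
\[
E\setminus E_{j_0}=\bigcup_{j=1}^K\bigl(E_j\setminus E_{j_0}\bigr)\subset\bigcup_{j=1}^K\bigl(E_j\Delta E_{j_0}\bigr).
\]
Countable (here finite) subadditivity of the Riemannian volume measure then gives
\[
\operatorname{Vol}(E\setminus E_{j_0})\le\sum_{j=1}^K\operatorname{Vol}(E_j\setminus E_{j_0})\le\sum_{j=1}^K\operatorname{Vol}(E_j\Delta E_{j_0}).
\]
The term $j=j_0$ contributes zero, so including it in the sum is harmless. Measurability of the union is immediate, and $E$ is again a Caccioppoli set because the perimeter of a finite union is at most the sum of the perimeters; only measurability is actually needed here.

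There is no real obstacle. The only point requiring care is to note that the absolute value can be removed because $E_{j_0}\subset E$, so one never needs the reverse direction of the estimate.
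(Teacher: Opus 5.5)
Your proposal is correct and follows essentially the same route as the paper: both use $E_{j_0}\subset E$ to drop the absolute value, write $E\setminus E_{j_0}=\bigcup_j(E_j\setminus E_{j_0})$, and apply finite subadditivity together with $E_j\setminus E_{j_0}\subset E_j\Delta E_{j_0}$. Your side remark that $E$ is again Caccioppoli is true, but as you note, only measurability is needed.
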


\begin{proof}
Since
\[
E=E_{j_0}\cup \bigcup_{j=1}^K(E_j\setminus E_{j_0}),
\]
we have
\[
\operatorname{Vol}(E)-\operatorname{Vol}(E_{j_0})
\le \sum_{j=1}^K \operatorname{Vol}(E_j\setminus E_{j_0})
\le \sum_{j=1}^K \operatorname{Vol}(E_j\Delta E_{j_0}).
\]
The claim follows because $\operatorname{Vol}(E)\ge \operatorname{Vol}(E_{j_0})$.
\end{proof}

Set $V:=\widehat X(NJ)_0$ and $\rho:=\sup_{v\in V}\bigl|\operatorname{Vol}(\widetilde\Omega_v)-\mathfrak h\bigr|$.

\begin{corollary}
\label{cor:dey_small_defect}
Let $\widetilde\Psi:V\to \mathcal C(M)$ be the map given by Proposition
\ref{prop:dey_discretization_package}.
Then
\[
\rho \le \left(3\cdot 2^p+1\right)\frac{\eta\delta}{J}.
\]
\end{corollary}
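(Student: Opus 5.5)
Fix $v\in V$. By the symmetry (P1), $\operatorname{Vol}(\widetilde\Omega_{\iota v})=\operatorname{Vol}(M)-\operatorname{Vol}(\widetilde\Omega_v)$, so both have the same half-volume defect. It therefore suffices to treat $v\in e_q(J)_0$ for some $q\in[Q]$; the case $f_q(J)_0$ is identical. The plan has two parts: an upper bound on $\operatorname{Vol}(\widetilde\Omega_v)$ from the containment (P2), and a matching lower bound from the complement.

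\textbf{Upper bound.} By (P2)$_{m,q}$, $\widetilde\Omega_v\subset E:=\bigcup_{x\in e_q(1)_0}\widetilde\Phi_\gamma(x)$. The cell $e_q$ has at most $2^m\le 2^p$ vertices $x$, and they all lie in a common cell of $\widehat X(N)$. Fix one vertex $x_0$ of $e_q$. For every vertex $x$ of $e_q$, the triangle inequality for symmetric differences, together with (iii) applied at $x$ and $x_0$ and the choice (ii) of $N$, gives
\[
\operatorname{Vol}(\widetilde\Phi_\gamma(x)\Delta\widetilde\Phi_\gamma(x_0))<\operatorname{Vol}(\widetilde\Phi_\gamma(x)\Delta\widehat\Omega(x))+\operatorname{Vol}(\widehat\Omega(x)\Delta\widehat\Omega(x_0))+\operatorname{Vol}(\widehat\Omega(x_0)\Delta\widetilde\Phi_\gamma(x_0))<3\frac{\eta\delta}{J}.
\]
Lemma \ref{lem:union_volume_control} then yields $\operatorname{Vol}(E)\le \operatorname{Vol}(\widetilde\Phi_\gamma(x_0))+3\cdot 2^p\frac{\eta\delta}{J}$. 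Since $\widehat\Omega(x_0)\in\mathcal C_{\mathfrak h}(M)$, (iii) also gives $\operatorname{Vol}(\widetilde\Phi_\gamma(x_0))<\mathfrak h+\frac{\eta\delta}{J}$. Combining these,
\[
\operatorname{Vol}(\widetilde\Omega_v)\le \mathfrak h+(3\cdot 2^p+1)\frac{\eta\delta}{J}.
\]

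\textbf{Lower bound.} Apply the same argument to $\iota v\in f_q(J)_0$. By (P2)$_{m,q}$, $\widetilde\Omega_{\iota v}$ is contained in the union of $\widetilde\Phi_\gamma(x)$ over $x\in f_q(1)_0$. The set $f_q$ is again a single cell of $\widehat X(N)$, so (ii) applies, as does (iii) with the half-volume of $\widehat\Omega$. This gives $\operatorname{Vol}(\widetilde\Omega_{\iota v})\le\mathfrak h+(3\cdot2^p+1)\eta\delta/J$. Since $\operatorname{Vol}(\widetilde\Omega_{\iota v})=2\mathfrak h-\operatorname{Vol}(\widetilde\Omega_v)$ by (P1), this is exactly the matching lower bound on $\operatorname{Vol}(\widetilde\Omega_v)$. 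Taking the supremum over $v\in V$ proves the corollary.

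The main point to verify is the bookkeeping of which lifted cell the vertices lie in. The vertices of $e_q(1)_0$ must lie in a common cell of $\widehat X(N)$ so that (ii) applies. This holds because $e_q$ is a connected component of $\pi^{-1}(c_q)$ and is therefore itself such a cell. The vertex count $2^m\le 2^p$ also relies on $\dim X\le p$.
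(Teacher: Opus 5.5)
Your proposal is correct and follows the paper's proof essentially step for step. The upper bound comes from (P2), the three-term symmetric-difference estimate via (ii)--(iii), Lemma~\ref{lem:union_volume_control} with $|e_q(1)_0|=2^m\le 2^p$, and the half-volume property of $\widehat\Omega(x_0)$; the lower bound comes from running the same argument on $\iota v\in f_q(J)_0$ and using (P1).
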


\begin{proof}
Fix $q\in[Q]$ and let $m:=\dim(c_q)\le p$.
Let $v\in e_q(J)_0$. By \rm(P2)$_{m,q}$,
\[
\widetilde\Omega_v\subset \bigcup_{x\in e_q(1)_0}\widetilde\Phi_\gamma(x).
\]
Choose $x_0\in e_q(1)_0$. For every $x\in e_q(1)_0$, the points $x$ and $x_0$ lie in a common cell
of $\widehat X(N)$, so by the choice of $N$ and $\gamma$,
\[
\begin{aligned}
\operatorname{Vol}\bigl(\widetilde\Phi_\gamma(x)\Delta \widetilde\Phi_\gamma(x_0)\bigr)
&\le
\operatorname{Vol}\bigl(\widetilde\Phi_\gamma(x)\Delta\widehat\Omega(x)\bigr) \\
&\quad
+\operatorname{Vol}\bigl(\widehat\Omega(x)\Delta\widehat\Omega(x_0)\bigr) \\
&\quad
+\operatorname{Vol}\bigl(\widehat\Omega(x_0)\Delta\widetilde\Phi_\gamma(x_0)\bigr) \\
&\le 3\frac{\eta\delta}{J}.
\end{aligned}
\]
Since $|e_q(1)_0|=2^m$, Lemma \ref{lem:union_volume_control} gives
\[
\operatorname{Vol}\Bigl(\bigcup_{x\in e_q(1)_0}\widetilde\Phi_\gamma(x)\Bigr)
\le \operatorname{Vol}(\widetilde\Phi_\gamma(x_0))+3\cdot 2^m\frac{\eta\delta}{J}.
\]
Moreover, since $\widehat\Omega(x_0)\in\mathcal{C}_{\mathfrak h}(M)$,
\[
\bigl|\operatorname{Vol}(\widetilde\Phi_\gamma(x_0))-\mathfrak h\bigr|
\le \operatorname{Vol}\bigl(\widetilde\Phi_\gamma(x_0)\Delta\widehat\Omega(x_0)\bigr)
\le \frac{\eta\delta}{J}.
\]
Hence
\[
\operatorname{Vol}(\widetilde\Omega_v)\le \mathfrak h+\left(3\cdot 2^m+1\right)\frac{\eta\delta}{J}.
\]

Applying the same argument to $\iota v\in f_q(J)_0$ and using \rm(P1)$_{m,q}$, we obtain
\[
\operatorname{Vol}(\widetilde\Omega_v)
=
2\mathfrak h-\operatorname{Vol}(\widetilde\Omega_{\iota v})
\ge
\mathfrak h-\left(3\cdot 2^m+1\right)\frac{\eta\delta}{J}.
\]
Therefore
\[
\bigl|\operatorname{Vol}(\widetilde\Omega_v)-\mathfrak h\bigr|
\le
\left(3\cdot 2^m+1\right)\frac{\eta\delta}{J}
\le
\left(3\cdot 2^p+1\right)\frac{\eta\delta}{J}.
\]
Taking the supremum over $v\in V$ gives the result.
\end{proof}

\subsection{Vertex phase fields and mean defect}

For each vertex $v\in V$, let $d_v:M\to\mathbb{R}$ be the signed distance function associated to $\Sigma_v$, chosen so that $d_v<0$ on $\widetilde\Omega_v$, $d_v>0$ on $M\setminus \widetilde\Omega_v$. By Proposition~\ref{prop:dey_discretization_package} and \cite[Proposition~9.1]{Gua18}, the function $d_v$ is Lipschitz and $|\nabla d_v|=1$ a.e. on $M$.

Let $\mathfrak g:\mathbb{R}\to(-1,1)$ be the unique solution of
\[
\mathfrak g''=W'(\mathfrak g),
\qquad
\mathfrak g(0)=0.
\]
Then $\mathfrak g$ is odd and $\mathfrak g(t)\to \pm 1$ exponentially as $t\to\pm\infty$. For $\varepsilon>0$, set $\mathfrak g_\varepsilon(t):=\mathfrak g(t/\varepsilon)$,
and define $g_\varepsilon:\mathbb{R}\to[-1,1]$ by
\[
g_\varepsilon(t):=
\begin{cases}
\mathfrak g_\varepsilon(t), & |t|\le \sqrt{\varepsilon},\\[0.5ex]
\mathfrak g_\varepsilon(\sqrt{\varepsilon})+\left(\dfrac{t}{\sqrt{\varepsilon}}-1\right)\bigl(1-\mathfrak g_\varepsilon(\sqrt{\varepsilon})\bigr),
& \sqrt{\varepsilon}\le t\le 2\sqrt{\varepsilon},\\[1.5ex]
1, & t\ge 2\sqrt{\varepsilon},\\[0.5ex]
\mathfrak g_\varepsilon(-\sqrt{\varepsilon})+\left(\dfrac{t}{\sqrt{\varepsilon}}+1\right)\bigl(1+\mathfrak g_\varepsilon(-\sqrt{\varepsilon})\bigr),
& -2\sqrt{\varepsilon}\le t\le -\sqrt{\varepsilon},\\[1.5ex]
-1, & t\le -2\sqrt{\varepsilon}.
\end{cases}
\]
Observe that $g_\varepsilon$ is odd and Lipschitz, and $g_\varepsilon(t)=\pm 1$ for $\pm t\ge 2\sqrt{\varepsilon}$.
We define $\vartheta_\varepsilon^v:=g_\varepsilon\circ d_v\in H^1(M)$.

Set
\[
\mathscr S:=
\{\Sigma_x^\gamma:\ x\in \widehat X(N)_0\}
\cup
\{\partial B_i(q):\ i\in[J],\ q\in[Q]\}.
\]

The next lemma is the basic estimate needed in the half-volume setting.

\begin{lemma}\label{lem:vertex_mean_defect}
There exist constants $C=C(\mathscr S)<\infty, \varepsilon_0=\varepsilon_0(\mathscr S)>0,$
such that for every $v\in V$ and every $\varepsilon\in(0,\varepsilon_0)$,
\[
\left|\int_M \vartheta_\varepsilon^v\,dV\right|
\le
2\bigl|\operatorname{Vol}(\widetilde\Omega_v)-\mathfrak h\bigr|
+
C\,\varepsilon.
\]
\end{lemma}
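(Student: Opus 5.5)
We split $M$ according to the sign of $d_v$ and the thin tubular region around $\Sigma_v$. Since $g_\varepsilon(t)=-1$ for $t\le -2\sqrt\varepsilon$ and $g_\varepsilon(t)=1$ for $t\ge 2\sqrt\varepsilon$, while $d_v<0$ on $\widetilde\Omega_v$ and $d_v>0$ on its complement, we compare $\vartheta_\varepsilon^v$ with the step function $\chi:=\mathbf 1_{M\setminus\widetilde\Omega_v}-\mathbf 1_{\widetilde\Omega_v}$. This gives
\[
\int_M \chi\,dV=\operatorname{Vol}(M)-2\operatorname{Vol}(\widetilde\Omega_v)=-2\bigl(\operatorname{Vol}(\widetilde\Omega_v)-\mathfrak h\bigr),
\]
which produces the first term. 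It therefore remains to bound $\int_M|\vartheta_\varepsilon^v-\chi|$.

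The two functions agree outside the tube $\{|d_v|<2\sqrt\varepsilon\}$. Inside the tube we would not use the trivial bound $|\vartheta_\varepsilon^v-\chi|\le 2$, since it would only give $O(\sqrt\varepsilon)$. Instead we use the coarea formula with $|\nabla d_v|=1$ a.e.:
\[
\int_M|\vartheta^v_\varepsilon-\chi|\,dV=\int_{-2\sqrt\varepsilon}^{2\sqrt\varepsilon}|g_\varepsilon(t)-\operatorname{sgn}(t)|\,\mathcal H^n(\{d_v=t\})\,dt .
\]
By oddness and the exponential convergence of $\mathfrak g$, we have $\int_{\mathbb R}|g_\varepsilon(t)-\operatorname{sgn}t|\,dt\le C\varepsilon$. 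On $[\sqrt\varepsilon,2\sqrt\varepsilon]$ the linear piece is bounded by $1-\mathfrak g(\varepsilon^{-1/2})$, which is exponentially small, so it contributes $o(\varepsilon)$. Hence it suffices to have a uniform bound
\[
\mathcal H^n(\{d_v=t\})\le K\qquad\text{for all }|t|<2\sqrt{\varepsilon_0}\text{ and all }v\in V,
\]
with $K$ depending only on $\mathscr S$. The conclusion then follows with $C=K\cdot C(W)$.

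The main obstacle is this level-set area bound, because $\Sigma_v$ is smooth only away from the closed set $S$. We would use (P3)--(P4): $\Sigma_v$ is contained in the finite union $\Sigma^{\mathscr S}$ of the smooth compact hypersurfaces in $\mathscr S$, a finite family independent of $v$. Then $\{d_v=t\}\subset\{x:\operatorname{dist}(x,\Sigma_v)=|t|\}$, and this is contained in the union over $\Gamma\in\mathscr S$ of the distance-$|t|$ sets to the relevant pieces of $\Gamma$.

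For each smooth closed hypersurface $\Gamma$ there is $\varepsilon_0(\Gamma)$ such that $\{\operatorname{dist}(\cdot,\Gamma)=s\}$ has area at most $2\mathcal H^n(\Gamma)$ for $s<2\sqrt{\varepsilon_0}$, by normal exponential coordinates. For the pieces near the boundaries of the open subsets $\Gamma\cap\Sigma_v$, the distance set is contained in the distance set to the full $\Gamma$ or in the $s$-neighborhood of $S$. To handle this cleanly, we would avoid the level-set formulation and instead bound the volume of the tube, $\operatorname{Vol}(\{|d_v|<r\})\le\sum_{\Gamma\in\mathscr S}\operatorname{Vol}(N_r(\Gamma))\le K r$ for $r<r_0(\mathscr S)$. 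We then integrate by layers: $|g_\varepsilon-\operatorname{sgn}|(d_v)\le\psi(|d_v|/\varepsilon)$ with $\psi$ decreasing, and the layer-cake formula gives
\[
\int_M\psi(|d_v|/\varepsilon)\le\int_0^\infty|\psi'(s)|\operatorname{Vol}(\{|d_v|<\varepsilon s\})\,ds\le K\varepsilon\int_0^\infty s|\psi'(s)|\,ds+(\text{exponentially small}).
\]
This route needs only the tube-volume bound for finitely many smooth compact hypersurfaces, which is uniform in $v$ because $\mathscr S$ is finite. This completes the plan.
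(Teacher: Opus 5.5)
Your proof is correct. Its skeleton matches the paper's: you compare $\vartheta^v_\varepsilon$ with the step function $\chi_{M\setminus\widetilde\Omega_v}-\chi_{\widetilde\Omega_v}$ to get the term $2|\operatorname{Vol}(\widetilde\Omega_v)-\mathfrak h|$, and then show that the error, which lives in the transition layer, is $O(\varepsilon)$. You differ from the paper in the geometric input for that last step.

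\emph{The paper's route.} It applies the coarea formula together with a uniform level-set bound $\mathcal H^n(\{d_v=\tau\})\le C_0$ for $|\tau|\le\tau_0$, imported from Dey's estimates (3.68)--(3.69). It then integrates $1-g_\varepsilon$ over $[0,2\sqrt\varepsilon]$.

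\emph{Your route.} You use only (P3). Since $|d_v|=\operatorname{dist}(\cdot,\Sigma_v)$ and $\Sigma_v\subset\bigcup_{\Gamma\in\mathscr S}\Gamma$, the set $\{|d_v|<r\}$ lies in the union of the $r$-tubes around finitely many smooth compact hypersurfaces. Hence $\operatorname{Vol}(\{|d_v|<r\})\le Kr$ uniformly in $v$. This holds for all $r>0$, not just $r<r_0$: for $r\ge r_0$ it follows from the trivial bound $\operatorname{Vol}(M)\le(\operatorname{Vol}(M)/r_0)\,r$, and the layer-cake integral does need all $r$. You then apply the layer-cake formula to the decreasing profile $\psi=1-\mathfrak g$. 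This gives $K\varepsilon\int_0^\infty s\,\mathfrak g'(s)\,ds=K\varepsilon\int_0^\infty(1-\mathfrak g)$. The linear piece on $\sqrt\varepsilon\le|d_v|\le2\sqrt\varepsilon$ adds at most $\operatorname{Vol}(M)\bigl(1-\mathfrak g(\varepsilon^{-1/2})\bigr)$, which is exponentially small.

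\emph{What each buys.} Your approach is more self-contained. You never need to control areas of level sets of the distance function to a hypersurface that is singular along $S$, which is exactly the delicate point the paper delegates to Dey. The same tube bound would also give Lemma~\ref{lem:vertex_transition_set} directly. The paper's version is shorter, given the citation.

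Your intermediate sketch bounds $\{d_v=t\}$ through distance sets to the pieces $\Gamma\cap\Sigma_v$. That sketch would need extra control near $S$ that you do not supply: the caps at the edges of the pieces, and interactions between pieces. So you are right to rest the final argument on the tube estimate instead.
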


\begin{proof}
Set $s_v:=\chi_{M\setminus \widetilde\Omega_v}-\chi_{\widetilde\Omega_v}$.
Then
\[
\int_M s_v\,dV
=
\operatorname{Vol}(M\setminus \widetilde\Omega_v)-\operatorname{Vol}(\widetilde\Omega_v)
=
2\bigl(\mathfrak h-\operatorname{Vol}(\widetilde\Omega_v)\bigr).
\]
Hence
\[
\left|\int_M \vartheta_\varepsilon^v\,dV\right|
\le
2\bigl|\operatorname{Vol}(\widetilde\Omega_v)-\mathfrak h\bigr|
+
\int_M |\vartheta_\varepsilon^v-s_v|\,dV.
\]
It remains to estimate the second term.

Define
\[
\operatorname{sgn}_0(t):=
\begin{cases}
1,& t>0,\\
0,& t=0,\\
-1,& t<0.
\end{cases}
\]
Since $d_v<0$ on $\widetilde\Omega_v$, $d_v>0$ on $M\setminus \widetilde\Omega_v$, and $\{d_v=0\}=\Sigma_v$ has zero $(n+1)$-dimensional measure, we have $s_v=\operatorname{sgn}_0(d_v)$
a.e. on $M$. Since $g_\varepsilon(t)=\pm 1$ for $\pm t\ge 2\sqrt{\varepsilon}$, it follows that $|\vartheta_\varepsilon^v-s_v|=0$ on $\{|d_v|\ge 2\sqrt{\varepsilon}\}$. Therefore
\[
\int_M |\vartheta_\varepsilon^v-s_v|\,dV
=
\int_{\{|d_v|<2\sqrt{\varepsilon}\}}
|g_\varepsilon(d_v)-\operatorname{sgn}_0(d_v)|\,dV.
\]

By \cite[(3.68)--(3.69)]{Dey}, there exist $ \tau_0=\tau_0(\mathscr S)>0, C_0=C_0(\mathscr S)<\infty$, such that for every $v\in V$ and every $|\tau|\le \tau_0$,
\[
\mathcal H^n(\{d_v=\tau\})\le C_0.
\]
Fix $\varepsilon_0=\varepsilon_0(\mathscr S)>0$ so small that $2\sqrt{\varepsilon_0}\le\tau_0$.
Then for every $\varepsilon\in(0,\varepsilon_0)$, the coarea formula gives
\[
\int_M |\vartheta_\varepsilon^v-s_v|\,dV
=
\int_{-2\sqrt{\varepsilon}}^{2\sqrt{\varepsilon}}
|g_\varepsilon(t)-\operatorname{sgn}_0(t)|\,\mathcal H^n(\{d_v=t\})\,dt.
\]
Since $g_\varepsilon$ is odd, we have
\[
|g_\varepsilon(t)-\operatorname{sgn}_0(t)|=1-g_\varepsilon(t)
\]
for $t\in(0,2\sqrt{\varepsilon})$, and similarly
\[
|g_\varepsilon(-t)-\operatorname{sgn}_0(-t)|=1-g_\varepsilon(t)
\]
for $t\in(0,2\sqrt{\varepsilon})$. Hence
\[
\int_M |\vartheta_\varepsilon^v-s_v|\,dV
\le
2C_0\int_0^{2\sqrt{\varepsilon}} \bigl(1-g_\varepsilon(t)\bigr)\,dt.
\]

We split the last integral into $[0,\sqrt{\varepsilon}]$ and $[\sqrt{\varepsilon},2\sqrt{\varepsilon}]$.
On $[0,\sqrt{\varepsilon}]$, we have $g_\varepsilon(t)=\mathfrak g_\varepsilon(t)=\mathfrak g(t/\varepsilon)$, so
\[
\int_0^{\sqrt{\varepsilon}} \bigl(1-g_\varepsilon(t)\bigr)\,dt
=
\varepsilon\int_0^{1/\sqrt{\varepsilon}} \bigl(1-\mathfrak g(s)\bigr)\,ds
\le
\varepsilon\int_0^\infty \bigl(1-\mathfrak g(s)\bigr)\,ds.
\]
Since $\mathfrak g(s)\to 1$ exponentially as $s\to+\infty$, the last integral is finite, and therefore
\[
\int_0^{\sqrt{\varepsilon}} \bigl(1-g_\varepsilon(t)\bigr)\,dt
\le C\,\varepsilon.
\]

On $[\sqrt{\varepsilon},2\sqrt{\varepsilon}]$, by the definition of $g_\varepsilon$,
\[
0\le 1-g_\varepsilon(t)\le 1-\mathfrak g(1/\sqrt{\varepsilon}),
\]
hence
\[
\int_{\sqrt{\varepsilon}}^{2\sqrt{\varepsilon}} \bigl(1-g_\varepsilon(t)\bigr)\,dt
\le
\sqrt{\varepsilon}\,\bigl(1-\mathfrak g(1/\sqrt{\varepsilon})\bigr).
\]
Again using the exponential convergence of $\mathfrak g$ to $1$, we obtain
\[
\int_{\sqrt{\varepsilon}}^{2\sqrt{\varepsilon}} \bigl(1-g_\varepsilon(t)\bigr)\,dt
\le C\,\varepsilon
\]
for all sufficiently small $\varepsilon$.

Combining the estimates, we conclude that
\[
\int_M |\vartheta_\varepsilon^v-s_v|\,dV\le C(\mathscr S)\,\varepsilon.
\]
This proves the statement.
\end{proof}

\subsection{Gluing construction and mean defect}

We now explain Dey's gluing construction from \cite[Section~3.5]{Dey} in the present setting
and use it to propagate the vertex mean-defect estimate to the final continuous family.

Following \cite[Proposition~3.10]{Dey}, for \(u_0,u_1,w\in H^1(M)\) define
\[
\phi(u_0,u_1,w):=\min\{\max\{u_0,-w\},\,\max\{u_1,w\}\},
\]
\[
\psi(u_0,u_1,w):=\max\{\min\{u_0,w\},\,\min\{u_1,-w\}\},
\]
and
\[
\theta(u_0,u_1,w):=\phi(u_0,u_1,w)^+ + \psi(u_0,u_1,w)^-.
\]
The map \(\theta\) is continuous in \(H^1(M)\), satisfies
\begin{equation}\label{eq:theta_sign_symmetry}
\theta(-u_0,-u_1,w)=-\theta(u_0,u_1,w),
\end{equation}
and, pointwise a.e. on \(M\),
\begin{equation}\label{eq:theta_pointwise_selection}
\theta(y)\in\{u_0(y),u_1(y),w(y),-w(y)\}.
\end{equation}
In particular, if \(|u_0|,|u_1|,|w|\le 1\) a.e., then $|\theta(u_0,u_1,w)|\le 1$ a.e. on $M$.

Let \(f:M\to [\frac13,\frac23]\) be the Morse function from \cite[Section~3.5]{Dey}.
For \(t\in [\frac13,\frac23]\), define the signed distance function
$d^t:M\to \mathbb{R}$ by
\[
d^t(y)=
\begin{cases}
-d(y,f^{-1}(t)),& f(y)\le t,\\[3pt]
\phantom{-}d(y,f^{-1}(t)),& f(y)\ge t.
\end{cases}
\]
Then Dey defines the switch family \(w_\varepsilon:I\to H^1(M)\) by $w_\varepsilon(t)=g_\varepsilon\circ d^t$ for $t\in\Bigl[\frac13,\frac23\Bigr]$, and on the outer intervals by linear interpolation, $w_\varepsilon(t)=1-3t\bigl(1-w_\varepsilon(\tfrac13)\bigr)$ for $0\le t\le \frac13$, and $w_\varepsilon(t)=-1+3(1-t)\bigl(1+w_\varepsilon(\tfrac23)\bigr)$ for $\frac23\le t\le 1$. In particular, \(w_\varepsilon:I\to H^1(M)\) is continuous, $|w_\varepsilon(t)|\le 1$ a.e. on $M$ for every $t\in I$, and $w_\varepsilon(0)=1$, $w_\varepsilon(1)=-1$.

Let \(\alpha\) be a
\(j\)-cell of \(\widehat X(NJ)\). As in Subsection~\ref{subsec:cell_complexes}, let $\Delta_\alpha:I^j\to \alpha$ be the canonical homeomorphism and \(D_\alpha:\alpha\to I^j\) its inverse. Writing
\[
z=(z',z_j)\in I^{j-1}\times I,
\]
set $\widehat\zeta^\alpha_\varepsilon:=\zeta_\varepsilon\circ \Delta_\alpha$. By Proposition~3.11 in \cite{Dey}, there exists a continuous \(\mathbb{Z}_2\)-equivariant map
\[
\zeta_\varepsilon:\widehat X\to H^1(M)\setminus\{0\}
\]
such that:
\begin{enumerate}
\item[(i)] for every vertex \(v\in V\),
\[
\zeta_\varepsilon(v)=\vartheta_\varepsilon^v;
\]
\item[(ii)] for every \(j\)-cell \(\alpha\),
\begin{equation}\label{eq:cellwise_theta_recursion}
\widehat\zeta^\alpha_\varepsilon(z',z_j)
=
\theta\Bigl(
\widehat\zeta^\alpha_\varepsilon(z',0),\,
\widehat\zeta^\alpha_\varepsilon(z',1),\,
w_\varepsilon(z_j)
\Bigr)
\qquad\forall\,(z',z_j)\in I^{j-1}\times I;
\end{equation}
\item[(iii)] for every \(j\)-cell \(\alpha\), there exists a subset \(J_\alpha\subset [J]\) such that $|J_\alpha|\le p,$
and for every \(\hat x\in \alpha\) and every vertex \(v\prec \alpha\),
\begin{equation}\label{eq:gluing_vertex_agreement_outside_bad_set}
\zeta_\varepsilon(\hat x)=\vartheta_\varepsilon^v
\end{equation}
a.e. on $M\setminus\Bigl(\mathcal A_1\cup \bigcup_{i\in J_\alpha} B_i^1\Bigr)$;
\item[(iv)] there exists a constant \(C<\infty\) such that
\begin{equation}\label{eq:glued_family_energy_bound}
\sup_{\hat x\in\widehat X} E_\varepsilon(\zeta_\varepsilon(\hat x))
\le
2\sigma\bigl(\tilde\omega_p(M)+C\eta\bigr)
\end{equation}
for all sufficiently small \(\varepsilon>0\).
\end{enumerate}

\begin{lemma}\label{lem:mean_defect_gluing_step}
Let \(\alpha\) be a cell of \(\widehat X(NJ)\), let \(\hat x\in \alpha\), and let \(v\in \alpha_0\) be a vertex.
Then
\[
\left|
\int_M \zeta_\varepsilon(\hat x)\,dV
-
\int_M \vartheta_\varepsilon^v\,dV
\right|
\le
2\,\operatorname{Vol}\!\Bigl(\mathcal A_1\cup \bigcup_{i\in J_\alpha} B_i^1\Bigr).
\]
\end{lemma}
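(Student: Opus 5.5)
The plan is to use property (iii) of the glued family $\zeta_\varepsilon$ directly. Set
\[
G_\alpha:=\mathcal A_1\cup \bigcup_{i\in J_\alpha} B_i^1 .
\]
By \eqref{eq:gluing_vertex_agreement_outside_bad_set}, for $\hat x\in\alpha$ and the chosen vertex $v\in\alpha_0$ (so $v\prec\alpha$, or $v=\alpha$ if $\alpha$ is itself a vertex, in which case the statement is trivial), we have $\zeta_\varepsilon(\hat x)=\vartheta_\varepsilon^v$ a.e. on $M\setminus G_\alpha$. The integrals of the two functions over $M\setminus G_\alpha$ therefore cancel exactly, and
\[
\left|\int_M \zeta_\varepsilon(\hat x)-\int_M\vartheta_\varepsilon^v\right|
\le \int_{G_\alpha}|\zeta_\varepsilon(\hat x)|+\int_{G_\alpha}|\vartheta_\varepsilon^v|.
\]

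It then suffices to show that both functions are bounded by $1$ a.e. For the vertex functions this is immediate, since $\vartheta_\varepsilon^v=g_\varepsilon\circ d_v$ and $g_\varepsilon$ takes values in $[-1,1]$. For $\zeta_\varepsilon(\hat x)$, I would argue by induction on the dimension of the cell containing $\hat x$, using the recursion \eqref{eq:cellwise_theta_recursion}:
\begin{itemize}
\item on a $j$-cell, $\widehat\zeta^\alpha_\varepsilon(z',z_j)=\theta(u_0,u_1,w_\varepsilon(z_j))$, where $u_0,u_1$ are values on $(j-1)$-dimensional faces;
\item by the induction hypothesis, $|u_0|,|u_1|\le 1$ a.e., and $|w_\varepsilon(t)|\le1$ a.e. was recorded above;
\item the pointwise selection property \eqref{eq:theta_pointwise_selection} then gives $|\theta|\le 1$ a.e.
\end{itemize}
The base case is the vertex functions themselves. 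The only mild care needed is that faces of $\alpha$ are again cells on which the recursion applies, which holds by the compatibility $\Delta_\alpha\circ\Delta_{\hat\beta}=\Delta_\beta$.

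Combining these two facts gives the bound $2\operatorname{Vol}(G_\alpha)$. I expect no real obstacle here. The only step needing attention is the uniform $L^\infty$ bound on $\zeta_\varepsilon$, and the induction above through \eqref{eq:theta_pointwise_selection} handles it.
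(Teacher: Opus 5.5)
Your proposal is correct and follows the paper's proof essentially verbatim. The paper also runs an induction over skeleta, using the pointwise selection property of $\theta$ and the cellwise recursion, to get $|\zeta_\varepsilon(\hat x)|\le 1$ a.e. It then uses property (iii) to confine $\zeta_\varepsilon(\hat x)-\vartheta_\varepsilon^v$ to $\mathcal A_1\cup\bigcup_{i\in J_\alpha}B_i^1$, where the difference is at most $2$ in absolute value. Your separate treatment of the case where $\alpha$ is itself a vertex, where (iii) as stated with $v\prec\alpha$ does not literally apply, is a small extra bit of care that the paper omits.
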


\begin{proof}
By \eqref{eq:theta_pointwise_selection}, \eqref{eq:cellwise_theta_recursion}, and the facts that
\(|\vartheta_\varepsilon^v|\le 1\) and \(|w_\varepsilon(t)|\le 1\), an induction over the skeleta shows that $|\zeta_\varepsilon(\hat x)|\le 1$ a.e. on $M$ for every \(\hat x\in\widehat X\). Hence
\[
\left|
\int_M \zeta_\varepsilon(\hat x)\,dV
-
\int_M \vartheta_\varepsilon^v\,dV
\right|
\le
\int_M
\bigl|\zeta_\varepsilon(\hat x)-\vartheta_\varepsilon^v\bigr|\,dV.
\]
By \eqref{eq:gluing_vertex_agreement_outside_bad_set}, the integrand vanishes a.e. outside
\[
E_\alpha:=\mathcal A_1\cup \bigcup_{i\in J_\alpha} B_i^1.
\]
Therefore
\[
\int_M
\bigl|\zeta_\varepsilon(\hat x)-\vartheta_\varepsilon^v\bigr|\,dV
=
\int_{E_\alpha}
\bigl|\zeta_\varepsilon(\hat x)-\vartheta_\varepsilon^v\bigr|\,dV
\le
2\,\operatorname{Vol}(E_\alpha),
\]
which proves the lemma.
\end{proof}

\begin{corollary}\label{cor:mean_defect_glued_family}
There exists a constant \(C=C(\mathscr S)<\infty\) such that,
\[
\sup_{\hat x\in \widehat X}
\left|
\int_M \zeta_\varepsilon(\hat x)\,dV
\right|
\le
2\left(3\cdot 2^p+1\right)\frac{\eta\delta}{J}
+
C\,\varepsilon
+
2\,\operatorname{Vol}(\mathcal A_1)
+
2p\max_{i\in[J]}\operatorname{Vol}(B_i^1).
\]
\end{corollary}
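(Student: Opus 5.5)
The bound follows by chaining three estimates already in place: Lemma~\ref{lem:mean_defect_gluing_step}, Lemma~\ref{lem:vertex_mean_defect}, and Corollary~\ref{cor:dey_small_defect}. Fix $\hat x\in\widehat X$ and choose a cell $\alpha$ of $\widehat X(NJ)$ containing $\hat x$. Choose a vertex $v\in\alpha_0$; this is possible because every cell has at least one vertex. By the triangle inequality,
\[
\left|\int_M\zeta_\varepsilon(\hat x)\,dV\right|
\le
\left|\int_M\vartheta_\varepsilon^v\,dV\right|
+
\left|\int_M\zeta_\varepsilon(\hat x)\,dV-\int_M\vartheta_\varepsilon^v\,dV\right|.
\]

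\textbf{First term.} For $\varepsilon\in(0,\varepsilon_0(\mathscr S))$, Lemma~\ref{lem:vertex_mean_defect} bounds it by $2|\operatorname{Vol}(\widetilde\Omega_v)-\mathfrak h|+C\varepsilon$. Since $v\in V$, we have $|\operatorname{Vol}(\widetilde\Omega_v)-\mathfrak h|\le\rho$. Corollary~\ref{cor:dey_small_defect} then gives $\rho\le(3\cdot2^p+1)\eta\delta/J$, which yields the first two terms of the claimed bound.

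\textbf{Second term.} Lemma~\ref{lem:mean_defect_gluing_step} bounds it by $2\operatorname{Vol}(\mathcal A_1\cup\bigcup_{i\in J_\alpha}B_i^1)$. By subadditivity of volume and $|J_\alpha|\le p$, this is at most
\[
2\operatorname{Vol}(\mathcal A_1)+2p\max_{i\in[J]}\operatorname{Vol}(B_i^1).
\]

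Adding the two bounds and taking the supremum over $\hat x$ proves the claim. The resulting constant $C$ is the one from Lemma~\ref{lem:vertex_mean_defect}, so it depends only on $\mathscr S$ and is independent of $\hat x$, $v$ and $\alpha$.

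The one point that needs care is that the estimate holds only for $\varepsilon<\varepsilon_0(\mathscr S)$. The statement should therefore be read, like property (iv) of $\zeta_\varepsilon$, as holding for all sufficiently small $\varepsilon$. For larger $\varepsilon$ one could instead enlarge $C$, using the trivial bound $|\int\zeta_\varepsilon|\le\operatorname{Vol}(M)$, which follows from $|\zeta_\varepsilon|\le1$. Apart from this, the argument is a direct assembly of the three results with no real obstacle.
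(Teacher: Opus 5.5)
Your proposal is correct and matches the paper's proof step for step. You fix a cell $\alpha\ni\hat x$ and a vertex $v\in\alpha_0$, bound the difference term by Lemma~\ref{lem:mean_defect_gluing_step} together with subadditivity and $|J_\alpha|\le p$, and bound the vertex term by Lemma~\ref{lem:vertex_mean_defect} and Corollary~\ref{cor:dey_small_defect}. Your remark that the estimate implicitly needs $\varepsilon<\varepsilon_0(\mathscr S)$ (or an enlarged constant obtained from $|\zeta_\varepsilon|\le 1$) is a fair clarification of a restriction the paper leaves unstated.
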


\begin{proof}
Fix \(\hat x\in \widehat X\), and let \(\alpha\) be a cell of \(\widehat X(NJ)\) containing \(\hat x\).
Choose a vertex \(v\in \alpha_0\). By Lemma~\ref{lem:mean_defect_gluing_step},
\[
\left|
\int_M \zeta_\varepsilon(\hat x)\,dV
\right|
\le
\left|
\int_M \vartheta_\varepsilon^v\,dV
\right|
+
2\,\operatorname{Vol}\!\Bigl(\mathcal A_1\cup \bigcup_{i\in J_\alpha} B_i^1\Bigr).
\]
Since \(|J_\alpha|\le p\),
\[
\operatorname{Vol}\!\Bigl(\mathcal A_1\cup \bigcup_{i\in J_\alpha} B_i^1\Bigr)
\le
\operatorname{Vol}(\mathcal A_1)+\sum_{i\in J_\alpha}\operatorname{Vol}(B_i^1)
\le
\operatorname{Vol}(\mathcal A_1)+p\max_{i\in[J]}\operatorname{Vol}(B_i^1).
\]
By Corollary~\ref{cor:dey_small_defect} and Lemma~\ref{lem:vertex_mean_defect}, we obtain
\[
\left|
\int_M \zeta_\varepsilon(\hat x)\,dV
\right|
\le
2\left(3\cdot 2^p+1\right)\frac{\eta\delta}{J}
+
C\,\varepsilon
+
2\,\operatorname{Vol}(\mathcal A_1)
+
2p\max_{i\in[J]}\operatorname{Vol}(B_i^1),
\]
as claimed.
\end{proof}

\begin{lemma}\label{lem:vertex_transition_set}
There exist constants \(C<\infty\) and \(\varepsilon_0>0\) such that for every \(v\in V\) and every
\(\varepsilon\in(0,\varepsilon_0)\), the set
\[
K_\varepsilon^v:=\{x\in M:\ |d_v(x)|<2\sqrt\varepsilon\}
\]
satisfies $\vartheta_\varepsilon^v=\pm1$ a.e. on $M\setminus K_\varepsilon^v$, and
\[
\operatorname{Vol}(K_\varepsilon^v)\le C\sqrt\varepsilon.
\]
\end{lemma}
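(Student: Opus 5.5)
The first assertion is immediate from the definition of $g_\varepsilon$. Since $g_\varepsilon(t)=1$ for $t\ge 2\sqrt\varepsilon$ and $g_\varepsilon(t)=-1$ for $t\le -2\sqrt\varepsilon$, we have $\vartheta_\varepsilon^v=g_\varepsilon\circ d_v=\pm1$ at every point of $M\setminus K_\varepsilon^v=\{|d_v|\ge 2\sqrt\varepsilon\}$. No exceptional null set is even needed.

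For the volume bound I will argue exactly as in the proof of Lemma~\ref{lem:vertex_mean_defect}, using the coarea formula for the Lipschitz function $d_v$. By Proposition~\ref{prop:dey_discretization_package} and \cite[Proposition~9.1]{Gua18}, $d_v$ is Lipschitz with $|\nabla d_v|=1$ a.e. on $M$. Hence for any measurable set $A\subset M$,
\[
\operatorname{Vol}(A)=\int_A|\nabla d_v|\,dV=\int_{\mathbb R}\mathcal H^n\bigl(A\cap\{d_v=t\}\bigr)\,dt .
\]
Taking $A=K_\varepsilon^v$ gives
\[
\operatorname{Vol}(K_\varepsilon^v)=\int_{-2\sqrt\varepsilon}^{2\sqrt\varepsilon}\mathcal H^n(\{d_v=t\})\,dt .
\]
I then invoke the uniform level-set bound from \cite[(3.68)--(3.69)]{Dey}, already used in Lemma~\ref{lem:vertex_mean_defect}. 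There are $\tau_0=\tau_0(\mathscr S)>0$ and $C_0=C_0(\mathscr S)$ such that $\mathcal H^n(\{d_v=\tau\})\le C_0$ for all $v\in V$ and all $|\tau|\le\tau_0$. Choose $\varepsilon_0$ with $2\sqrt{\varepsilon_0}\le\tau_0$. Then for $\varepsilon\in(0,\varepsilon_0)$ the integrand is bounded by $C_0$ on the whole interval of integration, so
\[
\operatorname{Vol}(K_\varepsilon^v)\le 4C_0\sqrt\varepsilon .
\]
This is the claim with $C=4C_0$.

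The only step that is not routine is the uniformity of the level-set area bound in $v$. This is what matters, because the hypersurfaces $\Sigma_v$ are only smooth away from the codimension-two set $S$. Uniformity should follow because the set $V$ is finite. Moreover, by (P3)--(P4) each $\Sigma_v$ lies in the union of finitely many smooth closed hypersurfaces from the finite family $\mathscr S$. Each level set $\{d_v=\tau\}$ for small $\tau$ lies in the $|\tau|$-tubular neighbourhood boundary of that union. Its area is then controlled by the tube-area bounds for the finitely many smooth members of $\mathscr S$, which are uniform for $|\tau|$ below their common normal injectivity radius. This is precisely the content of Dey's estimate, so I will cite it rather than reprove it. If a self-contained argument were wanted, I would bound $\mathcal H^n(\{d_v=\tau\})\le\sum_{\Sigma\in\mathscr S}\mathcal H^n(\{\operatorname{dist}(\cdot,\Sigma)=|\tau|\})$ and use the smooth tubular neighbourhood theorem for each $\Sigma$.
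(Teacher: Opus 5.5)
Your proof is correct and follows the paper's argument: $\vartheta_\varepsilon^v=\pm1$ off $K_\varepsilon^v$ by the definition of $g_\varepsilon$, then the coarea formula for $d_v$ with $|\nabla d_v|=1$, then Dey's level-set bound \cite[(3.68)--(3.69)]{Dey} integrated over $|\tau|<2\sqrt\varepsilon$. The paper quotes that bound as $C\sqrt\varepsilon+(1+C\sqrt\varepsilon)\mathcal H^n(\Sigma_v)$ and combines it with the uniform area bound from (P8), which amounts to your constant $C_0$.

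One caution on your optional self-contained sketch, which your cited argument does not depend on. The containment $\{d_v=\tau\}\subset\bigcup_{\Sigma\in\mathscr S}\{\operatorname{dist}(\cdot,\Sigma)=|\tau|\}$ is false in general, because a point at distance $|\tau|$ from $\Sigma_v$ can be closer to discarded pieces of the ambient hypersurfaces. A correct version would use parallel surfaces along normal geodesics from smooth points, plus a separate small contribution from points whose nearest point lies in $S$.
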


\begin{proof}
Since \(\vartheta_\varepsilon^v=g_\varepsilon\circ d_v\) and \(g_\varepsilon\) satisfies $g_\varepsilon(t)=-1$ for $t\le -2\sqrt\varepsilon$, $g_\varepsilon(t)=1$ for $t\ge 2\sqrt\varepsilon$, we have $\vartheta_\varepsilon^v=\pm1$ a.e. on $M\setminus K_\varepsilon^v$.

It remains to estimate \(\operatorname{Vol}(K_\varepsilon^v)\). Since \(d_v\) is Lipschitz and
\(|\nabla d_v|=1\) a.e.\ on \(M\), the coarea formula gives
\[
\operatorname{Vol}(K_\varepsilon^v)
=
\int_{-2\sqrt\varepsilon}^{2\sqrt\varepsilon}\mathcal H^n(\{d_v=\tau\})\,d\tau.
\]

Now apply \cite[(3.68)--(3.69)]{Dey} with \(U=M\). After shrinking \(\varepsilon_0\) if necessary so that
\(2\sqrt\varepsilon\le \tau_1\) for all \(0<\varepsilon\le \varepsilon_0\), we obtain
\[
\sup_{|\tau|\le 2\sqrt\varepsilon}\mathcal H^n(\{d_v=\tau\})
\le
C\sqrt\varepsilon + (1+C\sqrt\varepsilon)\,\mathcal H^n(\Sigma_v).
\]
By Proposition~\ref{prop:dey_discretization_package}, the hypersurfaces \(\Sigma_v\) have uniformly
bounded area, so after enlarging \(C\) we get
\[
\sup_{v\in V}\sup_{|\tau|\le 2\sqrt\varepsilon}\mathcal H^n(\{d_v=\tau\})\le C
\]
for all $0<\varepsilon\le \varepsilon_0$.
Therefore
\[
\operatorname{Vol}(K_\varepsilon^v)
\le
\int_{-2\sqrt\varepsilon}^{2\sqrt\varepsilon} C\,d\tau
=
4C\sqrt\varepsilon.
\]
Renaming the constant completes the proof.
\end{proof}

\begin{lemma}\label{lem:switch_transition_set}
There exist constants \(C<\infty\) and \(\varepsilon_0>0\) such that for every \(t\in I\) and every
\(\varepsilon\in(0,\varepsilon_0)\), there exists a measurable set \(K_\varepsilon^w(t)\subset M\) satisfying $w_\varepsilon(t)=\pm1$ a.e. on $M\setminus K_\varepsilon^w(t)$, and
\[
\operatorname{Vol}(K_\varepsilon^w(t))\le C\sqrt\varepsilon.
\]
\end{lemma}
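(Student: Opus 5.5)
We treat the three regimes of $t$ separately, following the definition of the switch family $w_\varepsilon$.

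\emph{Middle interval} $t\in[\frac13,\frac23]$. Here $w_\varepsilon(t)=g_\varepsilon\circ d^t$. We set
\[
K_\varepsilon^w(t):=\{y\in M:\ |d^t(y)|<2\sqrt\varepsilon\}.
\]
Since $g_\varepsilon(s)=\pm1$ for $\pm s\ge 2\sqrt\varepsilon$, we get $w_\varepsilon(t)=\pm1$ off this set. To bound its volume, we use that $d^t$ is Lipschitz with $|\nabla d^t|=1$ a.e. (again by \cite[Proposition~9.1]{Gua18}, since $f^{-1}(t)$ is a level set of a Morse function). The coarea formula then gives
\[
\operatorname{Vol}(K_\varepsilon^w(t))=\int_{-2\sqrt\varepsilon}^{2\sqrt\varepsilon}\mathcal H^n(\{d^t=\tau\})\,d\tau.
\]
It remains to bound $\mathcal H^n(\{d^t=\tau\})$ uniformly in $t\in[\frac13,\frac23]$ and $|\tau|\le\tau_1$. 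This is exactly what \cite[(3.68)--(3.69)]{Dey} provides for the level sets of the Morse function $f$, which Dey uses to control the energy of $w_\varepsilon$. The estimate gives $\mathcal H^n(\{d^t=\tau\})\le C\sqrt\varepsilon+(1+C\sqrt\varepsilon)\mathcal H^n(f^{-1}(t))$. The areas $\mathcal H^n(f^{-1}(t))$ are uniformly bounded for $t\in[\frac13,\frac23]$: even across critical values, level sets of a Morse function on a compact manifold have bounded area by the coarea formula and local Morse charts. Hence $\operatorname{Vol}(K_\varepsilon^w(t))\le 4C\sqrt\varepsilon$.

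\emph{Outer intervals.} For $t\in[0,\frac13]$ we have $w_\varepsilon(t)=1-3t\bigl(1-w_\varepsilon(\tfrac13)\bigr)$. Wherever $w_\varepsilon(\frac13)=1$, this gives $w_\varepsilon(t)=1$. So we take $K_\varepsilon^w(t):=K_\varepsilon^w(\frac13)\cup\{y: d^{1/3}(y)\le -2\sqrt\varepsilon\}$, the complement of the set where $w_\varepsilon(\frac13)=1$.

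This exposes a genuine problem. On $\{d^{1/3}\le-2\sqrt\varepsilon\}=\{f<\frac13\}$ minus a thin layer, we have $w_\varepsilon(\frac13)=-1$, so $w_\varepsilon(t)=1-6t$, which is not $\pm1$ for $t\in(0,\frac13)$. For the statement to hold, this region must be small. Since $f$ takes values in $[\frac13,\frac23]$, we get $\{f<\frac13\}=\emptyset$. Hence $w_\varepsilon(\frac13)=1$ off $K_\varepsilon^w(\frac13)$, and we may simply take $K_\varepsilon^w(t):=K_\varepsilon^w(\frac13)$. The interval $[\frac23,1]$ is handled symmetrically: $\{f>\frac23\}=\emptyset$, so $w_\varepsilon(\frac23)=-1$ off $K_\varepsilon^w(\frac23)$, and then $w_\varepsilon(t)=-1$ there.

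Measurability is clear because $d^t$ is continuous. The constants $C,\varepsilon_0$ are independent of $t$: $\varepsilon_0$ is chosen so that $2\sqrt{\varepsilon_0}\le\tau_1$.

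The main obstacle is the uniform-in-$t$ area bound for the parallel level sets $\{d^t=\tau\}$ near critical values of $f$, where $f^{-1}(t)$ is singular. I would rely on Dey's estimate (3.68)--(3.69), stated for all $t\in[\frac13,\frac23]$. If that estimate is stated only for the hypersurfaces $\Sigma_v$, I would instead bound $\operatorname{Vol}\{|d^t|<r\}\le Cr$ uniformly in $t$ directly. To do so, I would cover $M$ by Morse charts and regular regions, and use that $\{|d^t|<r\}$ lies in the $r$-tubular neighborhood of $f^{-1}(t)$, which has a uniform Minkowski content bound since $f^{-1}(t)$ is a uniformly bounded-area set with at worst conical singularities.
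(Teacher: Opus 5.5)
Your proposal is correct and follows the same route as the paper: on $[\frac13,\frac23]$ you use the coarea formula together with a uniform-in-$t$ bound on $\mathcal H^n(\{d^t=\tau\})$ (the paper cites \cite[Section~9]{Gua18} for this rather than \cite[(3.68)--(3.69)]{Dey}, which the paper applies only to the $\Sigma_v$), and on the outer intervals you reduce to $K_\varepsilon^w(\frac13)$ and $K_\varepsilon^w(\frac23)$. Your observation that $f\ge\frac13$ forces $d^{1/3}\ge0$, hence $w_\varepsilon(\frac13)\ge0$ and $w_\varepsilon(\frac13)=1$ off $K_\varepsilon^w(\frac13)$, is a useful sharpening: the paper says loosely that the interpolation preserves both values $\pm1$, but it actually preserves only $+1$ on $[0,\frac13]$ (symmetrically, only $-1$ on $[\frac23,1]$).
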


\begin{proof}
We split the proof into two cases.

\noindent\emph{Case 1: \(t\in[\frac13,\frac23]\).}
In this case, we have $w_\varepsilon(t)=g_\varepsilon\circ d^t$, where \(d^t\) is the signed distance function to the level set \(f^{-1}(t)\). Define
\[
K_\varepsilon^w(t):=\{x\in M:\ |d^t(x)|<2\sqrt\varepsilon\}.
\]
Since $g_\varepsilon(s)=-1$ for $s\le -2\sqrt\varepsilon$ and $g_\varepsilon(s)=1$ for $s\ge 2\sqrt\varepsilon$, it follows that $w_\varepsilon(t)=\pm1$ a.e. on $M\setminus K_\varepsilon^w(t)$.

It remains to estimate \(\operatorname{Vol}(K_\varepsilon^w(t))\). By \cite[Proposition~9.1]{Gua18},
the signed distance function \(d^t\) is Lipschitz and satisfies $|\nabla d^t|=1$ a.e. on $M$.

Hence the coarea formula gives
\[
\operatorname{Vol}(K_\varepsilon^w(t))
=
\int_{-2\sqrt\varepsilon}^{2\sqrt\varepsilon}\mathcal H^n(\{d^t=\tau\})\,d\tau.
\]

By \cite[Section 9]{Gua18}, there exist constants \(C<\infty\)
and \(\tau_0>0\), independent of \(t\in[\frac13,\frac23]\), such that
\[
\mathcal H^n(\{d^t=\tau\})\le C
\]
for all $t\in\Bigl[\frac13,\frac23\Bigr]$ and $|\tau|\le \tau_0$. After shrinking \(\varepsilon_0>0\) so that \(2\sqrt\varepsilon\le \tau_0\) whenever
\(0<\varepsilon\le \varepsilon_0\), we obtain
\[
\operatorname{Vol}(K_\varepsilon^w(t))
\le
\int_{-2\sqrt\varepsilon}^{2\sqrt\varepsilon} C\,d\tau
\le 4C\sqrt\varepsilon.
\]
After renaming the constant, this proves
\[
\operatorname{Vol}(K_\varepsilon^w(t))\le C\sqrt\varepsilon
\]
for all $t\in\Bigl[\frac13,\frac23\Bigr]$ and $0<\varepsilon\le \varepsilon_0$.

\noindent\emph{Case 2: \(t\in[0,\frac13]\cup[\frac23,1]\).}
Let \(t\in[0,\frac13]\). By construction, \(w_\varepsilon(t)\) is a linear interpolation between the constant function \(1\)
and the endpoint function \(w_\varepsilon(\frac13)\). In particular, if $w_\varepsilon\Bigl(\frac13,x\Bigr)\in\{\pm1\},$ then also $w_\varepsilon(t,x)\in\{\pm1\}.$ Thus the set where \(w_\varepsilon(t)\neq\pm1\) is contained in the set where \(w_\varepsilon(\frac13) \neq \pm 1\). Hence we may take $K_\varepsilon^w(t):=K_\varepsilon^w\Bigl(\frac13\Bigr)$ for $t\in\Bigl[0,\frac13\Bigr]$. By Case 1,
\[
\operatorname{Vol}(K_\varepsilon^w(t))
=
\operatorname{Vol}\!\Bigl(K_\varepsilon^w\Bigl(\frac13\Bigr)\Bigr)
\le C\sqrt\varepsilon.
\]

The same argument with \(w_\varepsilon(\frac23)\) gives the bound on \([\frac23,1]\). This proves the lemma.
\end{proof}

\begin{proposition}\label{prop:zeta_transition_set}
There exist constants \(C<\infty\) and \(\varepsilon_0>0\) such that for every
\(\varepsilon\in(0,\varepsilon_0)\) and every \(\hat x\in\widehat X\), there exists a measurable set
\(K_\varepsilon(\hat x)\subset M\) with $\zeta_\varepsilon(\hat x)=\pm1$ a.e. on $M\setminus K_\varepsilon(\hat x)$, and
\[
\sup_{\hat x\in\widehat X}\operatorname{Vol}(K_\varepsilon(\hat x))\le C\sqrt\varepsilon.
\]
\end{proposition}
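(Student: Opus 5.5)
We will prove the proposition by induction over the skeleta of $\widehat X(NJ)$, using the cellwise recursion \eqref{eq:cellwise_theta_recursion} together with the pointwise selection property \eqref{eq:theta_pointwise_selection}. The key observation is that $\theta(u_0,u_1,w)$ takes, a.e., one of the values $u_0,u_1,w,-w$. So if each of $u_0$, $u_1$, $w$ equals $\pm1$ outside a set, then $\theta$ equals $\pm1$ outside the union of those sets. More precisely, let $K_0,K_1,K^w$ be measurable sets outside of which $u_0$, $u_1$, $w$ (respectively) are $\pm1$ a.e. Then on $M\setminus(K_0\cup K_1\cup K^w)$ every candidate value lies in $\{\pm1\}$, hence so does $\theta(u_0,u_1,w)$.

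\textbf{Base case.} For vertices $v\in V$ we take $K_\varepsilon(v):=K_\varepsilon^v$ from Lemma~\ref{lem:vertex_transition_set}, which has volume at most $C\sqrt\varepsilon$.

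\textbf{Inductive step.} Suppose sets have been defined on the $(j-1)$-skeleton with
\[
\operatorname{Vol}(K_\varepsilon(\hat y))\le C_{j-1}\sqrt\varepsilon .
\]
For $\hat x$ in the interior of a $j$-cell $\alpha$, write $D_\alpha(\hat x)=(z',z_j)$. The points $\Delta_\alpha(z',0)$ and $\Delta_\alpha(z',1)$ lie in $(j-1)$-faces of $\alpha$, so they already carry sets. Define
\[
K_\varepsilon(\hat x):=K_\varepsilon(\Delta_\alpha(z',0))\cup K_\varepsilon(\Delta_\alpha(z',1))\cup K_\varepsilon^w(z_j),
\]
with $K_\varepsilon^w$ from Lemma~\ref{lem:switch_transition_set}. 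By the observation above, $\zeta_\varepsilon(\hat x)=\pm1$ a.e. off this set, and
\[
\operatorname{Vol}(K_\varepsilon(\hat x))\le (2C_{j-1}+C_w)\sqrt\varepsilon .
\]
Since $\dim\widehat X\le p$, the recursion stops after $p$ steps. It yields the constant
\[
C_p=2^pC_V+(2^p-1)C_w,
\]
which is uniform in $\hat x$. We take $\varepsilon_0$ to be the minimum of the thresholds in the two lemmas.

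\textbf{Expected obstacle: well-definedness.} A point lying on a lower-dimensional face belongs to several cells, so it might receive different sets depending on which cell is used. We resolve this by defining $K_\varepsilon(\hat x)$ once, using the unique cell containing $\hat x$ in its relative interior; for vertices this is the base case. The face points $\Delta_\alpha(z',0)$ and $\Delta_\alpha(z',1)$ lie in lower skeleta, where $K_\varepsilon$ is already defined, so the induction is consistent. Measurability in $\hat x$ is not required; we only need each $K_\varepsilon(\hat x)$ to be a measurable subset of $M$, which finite unions preserve. The identity $\zeta_\varepsilon(\hat x)=\theta(\dots)$ holds via \eqref{eq:cellwise_theta_recursion} for the interior parametrization, so the proposition follows.
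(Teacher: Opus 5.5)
Your proposal is correct and follows the paper's proof essentially verbatim. Like the paper, you induct over the skeleta with the base case from Lemma~\ref{lem:vertex_transition_set}, take the union of the two face sets and $K_\varepsilon^w(z_j)$ in the inductive step via the pointwise selection property of $\theta$, and close with the recursion $C_j=2C_{j-1}+C_w$. Your well-definedness remark (assigning each $\hat x$ through the cell containing it in its relative interior) is a harmless clarification that the paper leaves implicit; it is not strictly needed, since the statement only asks for the existence of some set for each $\hat x$.
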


\begin{proof}
We argue by induction over the skeleta of \(\widehat X(NJ)\).

If \(\hat x=v\in V=\widehat X(NJ)_0\) is a vertex, set
$K_\varepsilon(v):=K_\varepsilon^v.$ Then the conclusion follows from Lemma~\ref{lem:vertex_transition_set}.

Now let \(\alpha\) be a \(j\)-cell, \(j\ge1\), and write $ \widehat\zeta^\alpha_\varepsilon:=\zeta_\varepsilon\circ\Delta_\alpha.$ For \(z=(z',z_j)\in I^{j-1}\times I\), (\ref{eq:cellwise_theta_recursion}) gives
\[
\widehat\zeta^\alpha_\varepsilon(z',z_j)
=
\theta\Bigl(
\widehat\zeta^\alpha_\varepsilon(z',0),\,
\widehat\zeta^\alpha_\varepsilon(z',1),\,
w_\varepsilon(z_j)
\Bigr).
\]
By the inductive hypothesis, there exist measurable sets $K_\varepsilon^0(z') ,\ K_\varepsilon^1(z') \subset M$ such that $\widehat\zeta^\alpha_\varepsilon(z',0)=\pm1$ on $M\setminus K_\varepsilon^0(z')$ and $\widehat\zeta^\alpha_\varepsilon(z',1)=\pm1$ on $M\setminus K_\varepsilon^1(z')$, with
\[
\operatorname{Vol}(K_\varepsilon^0(z')),\ \operatorname{Vol}(K_\varepsilon^1(z'))\le C_{j-1}\sqrt\varepsilon.
\]
Also, by Lemma~\ref{lem:switch_transition_set}, there exists a measurable set $K_\varepsilon^w(z_j)\subset M$
such that $w_\varepsilon(z_j)=\pm1$ on $M\setminus K_\varepsilon^w(z_j)$, and
\[
\operatorname{Vol}(K_\varepsilon^w(z_j))\le C_w\sqrt\varepsilon.
\]

Set
\[
K_\varepsilon^\alpha(z',z_j)
:=
K_\varepsilon^0(z')\cup K_\varepsilon^1(z')\cup K_\varepsilon^w(z_j).
\]
If \(y\in M\setminus K_\varepsilon^\alpha(z',z_j)\), then
\[
\widehat\zeta^\alpha_\varepsilon(z',0)(y),\
\widehat\zeta^\alpha_\varepsilon(z',1)(y),\
w_\varepsilon(z_j)(y)\in\{\pm1\}.
\]
Since \(\theta(u_0,u_1,w)(y)\in\{u_0(y),u_1(y),w(y),-w(y)\}\), it follows that $\widehat\zeta^\alpha_\varepsilon(z',z_j)(y)\in\{\pm1\}.$
Hence $\widehat\zeta^\alpha_\varepsilon(z',z_j)=\pm1$
a.e. on $M\setminus K_\varepsilon^\alpha(z',z_j)$. Moreover,
\[
\operatorname{Vol}(K_\varepsilon^\alpha(z',z_j))
\le
\operatorname{Vol}(K_\varepsilon^0(z'))+\operatorname{Vol}(K_\varepsilon^1(z'))+\operatorname{Vol}(K_\varepsilon^w(z_j))
\le
(2C_{j-1}+C_w)\sqrt\varepsilon.
\]
Thus the induction closes with $C_j:=2C_{j-1}+C_w$.

Since \(\widehat X(NJ)\) has dimension at most \(p\), after \(p\) steps we obtain a constant
\(C<\infty\) such that for every \(\hat x\in\widehat X\) there exists a measurable set
\(K_\varepsilon(\hat x)\) with $\zeta_\varepsilon(\hat x)=\pm1$
a.e. on $M\setminus K_\varepsilon(\hat x)$, and
\[
\operatorname{Vol}(K_\varepsilon(\hat x))\le C\sqrt\varepsilon.
\]
This proves the proposition.
\end{proof}

\subsection{Mean-zero correction map}

We now record the mean-zero correction map that will be applied to the final glued family.

\begin{proposition}\label{prop:mean_zero_shift}
Let $Z$ be a compact metric space. For each $\varepsilon>0$, let
\[
u_\varepsilon:Z\to H^1(M)
\]
be continuous, and assume:
\begin{enumerate}
\item[(i)] $|u_\varepsilon(z)|\le 1$ a.e.\ on $M$ for every $z\in Z$;
\item[(ii)] there exists a measurable set $K_\varepsilon(z)\subset M$ such that $u_\varepsilon(z)=\pm 1$ a.e. on $M\setminus K_\varepsilon(z)$, and
\[
\sup_{z\in Z}\operatorname{Vol}(K_\varepsilon(z))\le C_0\sqrt{\varepsilon};
\]
\end{enumerate}
Set
\[
m_\varepsilon
:=
\sup_{z\in Z}
\left|
\frac{1}{\operatorname{Vol}(M)}\int_M u_\varepsilon(z)\,dV
\right|,
\qquad
\bar u_\varepsilon(z):=
u_\varepsilon(z)-\frac{1}{\operatorname{Vol}(M)}\int_M u_\varepsilon(z)\,dV.
\]
Then:
\begin{enumerate}
\item[(a)] $z\mapsto \bar u_\varepsilon(z)$ is continuous;
\item[(b)] if $u_\varepsilon$ is odd, then $\bar u_\varepsilon$ is odd;
\item[(c)] $\displaystyle \int_M \bar u_\varepsilon(z)\,dV=0$ for all $z\in Z$;
\item[(d)] there exists $C<\infty$, independent of $\varepsilon$ and $z$, such that
\[
E_\varepsilon(\bar u_\varepsilon(z))
\le
E_\varepsilon(u_\varepsilon(z))
+
C\left(\frac{m_\varepsilon}{\sqrt{\varepsilon}}+\frac{m_\varepsilon^2}{\varepsilon}\right)
\]
for all $z\in Z$.
\end{enumerate}
\end{proposition}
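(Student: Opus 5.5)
Parts (a)--(c) are immediate, and the work is in (d). For (a), the map $u\mapsto \int_M u\,dV$ is a bounded linear functional on $H^1(M)$, since $|\int_M u|\le \operatorname{Vol}(M)^{1/2}\|u\|_{L^2}$. Hence $z\mapsto \bar u_\varepsilon(z)$ is the composition of $u_\varepsilon$ with a continuous affine map of $H^1(M)$, and is therefore continuous. For (b), if $u_\varepsilon(\tau z)=-u_\varepsilon(z)$, then the mean also changes sign, and so $\bar u_\varepsilon(\tau z)=-\bar u_\varepsilon(z)$. Part (c) holds by construction.

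For (d), write $c:=c(z)=\frac{1}{\operatorname{Vol}(M)}\int_M u_\varepsilon(z)\,dV$, so that $|c|\le m_\varepsilon$. The gradient term is unchanged, since $\nabla\bar u_\varepsilon=\nabla u_\varepsilon$. It therefore suffices to show
\[
\frac1\varepsilon\int_M \bigl(W(u-c)-W(u)\bigr)\,dV\le C\Bigl(\frac{m_\varepsilon}{\sqrt\varepsilon}+\frac{m_\varepsilon^2}{\varepsilon}\Bigr),
\]
with $u=u_\varepsilon(z)$. We may assume $m_\varepsilon\le 1$; otherwise we use $|u-c|\le 1+m_\varepsilon$ together with the local Lipschitz bound for $W$, and the $m_\varepsilon^2/\varepsilon$ term absorbs the contribution after enlarging $C$ (note that $|c|\le 1$ always holds by (i)). Since $|u|\le1$ and $|c|\le 1$, all arguments of $W$ lie in $[-2,2]$, where $W$ is smooth with uniformly bounded $W'$ and $W''$.

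Split $M$ into $K_\varepsilon(z)$ and its complement.

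On $K_\varepsilon(z)$, the mean value theorem gives $|W(u-c)-W(u)|\le \|W'\|_{L^\infty[-2,2]}\,|c|$. Together with $\operatorname{Vol}(K_\varepsilon(z))\le C_0\sqrt\varepsilon$, this yields a contribution of at most $\frac{1}{\varepsilon}C\,m_\varepsilon\sqrt\varepsilon=C m_\varepsilon/\sqrt\varepsilon$.

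On $M\setminus K_\varepsilon(z)$ we have $u=\pm1$, and the key point is that $W(\pm1)=0$ and $W'(\pm1)=0$ by (iii). Taylor expansion then gives $W(\pm1-c)\le \tfrac12\|W''\|_{L^\infty[-2,2]}\,c^2$, and of course $W(u)=0$ there. This region contributes at most $\frac1\varepsilon C\operatorname{Vol}(M)\,m_\varepsilon^2$.

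Adding the two contributions gives (d), with $C$ depending only on $W$, $C_0$ and $\operatorname{Vol}(M)$. The only step that needs care is the complement region, where we need the vanishing of both $W$ and $W'$ at $\pm1$ to obtain a quadratic rather than a linear dependence on $m_\varepsilon$.
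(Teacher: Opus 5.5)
Your proof is correct and follows the paper's argument essentially verbatim: you note the gradient term is unchanged and split $M$ into $K_\varepsilon(z)$, where the Lipschitz bound on $W$ gives $C m_\varepsilon/\sqrt{\varepsilon}$, and its complement, where $W(\pm1)=W'(\pm1)=0$ gives the quadratic $C m_\varepsilon^2/\varepsilon$ term. Your case $m_\varepsilon>1$ is superfluous, since hypothesis (i) already forces $|c|\le 1$ and hence $m_\varepsilon\le 1$.
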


\begin{proof}
Items (a)--(c) are immediate, since $z\mapsto \int_M u_\varepsilon(z)\,dV$
is continuous and odd whenever $u_\varepsilon$ is.

Fix $z\in Z$ and set
\[
c_\varepsilon(z):=\frac{1}{\operatorname{Vol}(M)}\int_M u_\varepsilon(z)\,dV.
\]
Then $|c_\varepsilon(z)|\le m_\varepsilon.$ Since adding a constant does not change the gradient term,
\[
E_\varepsilon(\bar u_\varepsilon(z))-E_\varepsilon(u_\varepsilon(z))
=
\frac1\varepsilon\int_M
\Bigl(W(u_\varepsilon(z)-c_\varepsilon(z))-W(u_\varepsilon(z))\Bigr)\,dV.
\]
We split the integral into $K_\varepsilon(z)$ and its complement.

On $M\setminus K_\varepsilon(z)$ we have $u_\varepsilon(z)=\pm1$ a.e. Since $ W(\pm1)=0$ and $W'(\pm1)=0$, Taylor's theorem gives
\[
|W(\pm1-c)-W(\pm1)|\le C\,c^2
\]
for $|c|\le 1$. Therefore
\[
\frac1\varepsilon\int_{M\setminus K_\varepsilon(z)}
\Bigl|W(u_\varepsilon(z)-c_\varepsilon(z))-W(u_\varepsilon(z))\Bigr|\,dV
\le C\,\frac{m_\varepsilon^2}{\varepsilon}.
\]

On $K_\varepsilon(z)$, since $|u_\varepsilon(z)|\le 1$ and $|c_\varepsilon(z)|\le 1$ for $\varepsilon$ small,
the values of $u_\varepsilon(z)-c_\varepsilon(z)$ remain in a fixed compact interval. Hence $W$ is
Lipschitz there, so
\[
|W(u_\varepsilon(z)-c_\varepsilon(z))-W(u_\varepsilon(z))|
\le C\,|c_\varepsilon(z)|
\le C\,m_\varepsilon.
\]
Using $\operatorname{Vol}(K_\varepsilon(z))\le C_0\sqrt{\varepsilon}$, we obtain
\[
\frac1\varepsilon\int_{K_\varepsilon(z)}
\Bigl|W(u_\varepsilon(z)-c_\varepsilon(z))-W(u_\varepsilon(z))\Bigr|\,dV
\le C\,\frac{m_\varepsilon}{\sqrt{\varepsilon}}.
\]

Combining the two estimates proves (d).
\end{proof}

\subsection{Proof of Theorem \ref{thm:lpwp}}

\begin{proof}
For each $\varepsilon>0$, choose the parameters in Section \ref{subsec:almost-smooth} so that
\[
\frac{\eta\delta}{J}+p\max_{i\in[J]}\operatorname{Vol}(B_i^1)\le \varepsilon,
\qquad
\operatorname{Vol}(\mathcal A_1)\le \varepsilon.
\]
This is possible because, once the finite family of centers $\{p_i\}_{i=1}^J$ and the radius
$r_1$ are fixed, the annuli defining $\mathcal A_1$ are
\[
A\!\left(p_i,\;r_1-\tfrac32\delta,\;r_1+\tfrac12\delta\right),
\]
and hence
\[
\operatorname{Vol}(\mathcal A_1)\le
\sum_{i=1}^J
\operatorname{Vol}\!\left(A\!\left(p_i,\;r_1-\tfrac32\delta,\;r_1+\tfrac12\delta\right)\right)\to 0
\]
as $\delta\to 0$. Then Corollary~\ref{cor:mean_defect_glued_family} yields
\begin{equation}\label{eq:mean_defect_O_eps}
m_\varepsilon
\le C\,\varepsilon.
\end{equation}

Define
\[
\bar\zeta_\varepsilon(\hat x)
:=
\zeta_\varepsilon(\hat x)
-
\frac{1}{\operatorname{Vol}(M)}\int_M \zeta_\varepsilon(\hat x)\,dV.
\]
By Proposition~\ref{prop:mean_zero_shift}, the map $ \bar\zeta_\varepsilon:\widehat X\to Y$ is continuous and odd, and
\begin{equation}\label{eq:energy_after_mean_zero_shift}
\sup_{\hat x\in\widehat X} E_\varepsilon(\bar\zeta_\varepsilon(\hat x))
\le
\sup_{\hat x\in\widehat X} E_\varepsilon(\zeta_\varepsilon(\hat x))
+
C\left(\frac{m_\varepsilon}{\sqrt{\varepsilon}}+\frac{m_\varepsilon^2}{\varepsilon}\right).
\end{equation}
Using \eqref{eq:mean_defect_O_eps}, the last error is $O(\sqrt{\varepsilon})$.

For $\varepsilon>0$ sufficiently small, we also have $\bar\zeta_\varepsilon(\hat x)\neq 0$ for every
$\hat x\in\widehat X$. Indeed, by Proposition~\ref{prop:mean_zero_shift}
and Proposition~\ref{prop:zeta_transition_set}, the set $K_\varepsilon(\hat x)$ satisfies
\[
\operatorname{Vol}(K_\varepsilon(\hat x))\le C\sqrt{\varepsilon},
\]
while one has $\zeta_\varepsilon(\hat x)=\pm1$ a.e. on $M\setminus K_\varepsilon(\hat x)$. Since $m_\varepsilon\to 0$, for small $\varepsilon$ we have $m_\varepsilon\le \frac12$, and therefore $ |\bar\zeta_\varepsilon(\hat x)|\ge \frac12$ a.e. on $M\setminus K_\varepsilon(\hat x)$. Hence $\bar\zeta_\varepsilon(\hat x)$ is not identically zero.

Set $A_\varepsilon:=\bar\zeta_\varepsilon(\widehat X)\subset Y\setminus\{0\}$. Then $A_\varepsilon$ is compact and invariant. Since $ \bar\zeta_\varepsilon:\widehat X\to A_\varepsilon$ is continuous, odd, and surjective, the monotonicity of the $\mathbb{Z}_2$-index gives
\[
\operatorname{ind}_{\mathbb{Z}_2}(A_\varepsilon)\ge \operatorname{ind}_{\mathbb{Z}_2}(\widehat X)\ge p+1.
\]
Therefore $A_\varepsilon\in\mathcal G_p$, and so
\[
\tilde c_\varepsilon(p)
\le
\sup_{u\in A_\varepsilon}E_\varepsilon(u)
=
\sup_{\hat x\in\widehat X}E_\varepsilon(\bar\zeta_\varepsilon(\hat x)).
\]

Finally, using \eqref{eq:glued_family_energy_bound} and
\eqref{eq:energy_after_mean_zero_shift}, we obtain
\[
\tilde c_\varepsilon(p)
\le
2\sigma\tilde\omega_p(M)+C\eta
+
C\left(\frac{m_\varepsilon}{\sqrt{\varepsilon}}+\frac{m_\varepsilon^2}{\varepsilon}\right).
\]
Since $\eta=\eta(\varepsilon)\to 0$, it follows that
\[
\limsup_{\varepsilon\to0^+}\tilde c_\varepsilon(p)\le 2\sigma\,\tilde\omega_p(M).
\]

This completes the proof of the reverse inequality.
\end{proof}


\vspace{0.5cm}
\noindent Department of Mathematics, University of Toronto, Toronto, Canada\\
\textit{E-mail address}: \texttt{talant.talipov@mail.utoronto.ca}

\end{document}